\pdfoutput=1

\documentclass[english]{jnsao}
\bibliographystyle{jnsao}

\usepackage[capitalise]{cleveref}
\usepackage[justification=centering]{subcaption}
\usepackage{pgfplots}
\pgfplotsset{compat=1.16}
\usepgfplotslibrary{fillbetween}

\newlength\fheightscen \newlength\fwidthscen 
\newlength\fheight \newlength\fwidth 

\newcommand{\N}{\mathbb{N}}

\newcommand{\R}{\mathbb{R}}
\newcommand{\Normal}{\mathcal{N}}
\newcommand{\X}{\mathcal{X}}
\newcommand{\Y}{\mathcal{Y}}
\newcommand{\V}{\mathcal{V}}
\newcommand{\Z}{\mathbb{Z}}
\newcommand{\E}{\mathbb{E}}
\newcommand{\Fourier}{\mathcal{F}}
\newcommand{\Fper}{\mathcal{F}_P^\text{per}}
\newcommand{\di}{\mathrm{d}}
\newcommand{\dprime}{{\prime\prime}}
\newcommand{\udag}{{u^\dagger}}
\newcommand{\Psimap}{\Psi_\mathrm{MAP}}
\newcommand{\Phimap}{\Phi_\mathrm{MAP}}
\newcommand{\Id}{\mathrm{Id}}

\renewcommand{\epsilon}{\varepsilon}
\renewcommand{\phi}{\varphi}
\renewcommand{\P}{\mathbb{P}}

\newcommand{\norm}[1]{{\lVert #1 \rVert}}
\newcommand{\bignorm}[1]{\left\lVert #1 \right\rVert}
\newcommand{\abs}[1]{{\lvert #1 \rvert}}
\newcommand{\bigabs}[1]{\left\lvert #1 \right\rvert}
\newcommand{\scalprod}[1]{{\langle #1 \rangle}}
\newcommand{\bigscalprod}[1]{{\left\langle #1 \right\rangle}}
\newcommand{\dualpair}[1]{{\langle #1 \rangle}}
\newcommand{\bigdualpair}[1]{{\left\langle #1 \right\rangle}}
\newcommand{\Prob}[1]{\mathbb{P}\left[ #1 \right]}
\newcommand{\Exp}[1]{\mathbb{E}\left[ #1 \right]}

\DeclareMathOperator{\supp}{supp}
\DeclareMathOperator{\tr}{tr}
\DeclareMathOperator{\ran}{ran}

\DeclareMathOperator{\spn}{span}

\DeclareMathOperator{\erfc}{erfc}
\DeclareMathOperator{\Var}{Var}
\DeclareMathOperator{\esssupp}{ess\,supp}

\newcommand{\rev}[1]{#1} 

\theoremstyle{plain}
\newtheorem{assumption}{Assumption}

\crefname{assumption}{Assumption}{Assumptions}
\crefname{remark}{Remark}{Remarks}
\crefname{example}{Example}{Examples}
\crefname{lemma}{Lemma}{Lemmas}
\crefname{theorem}{Theorem}{Theorems}
\crefname{corollary}{Corollary}{Corollaries}

\title{Maximum a posteriori testing in \\statistical inverse problems}
\shorttitle{MAP testing in statistical inverse problems}
\author{%
	Remo Kretschmann\thanks{Institute of Mathematics, University of Potsdam, Karl-Liebknecht-Straße 24--25, 14476 Potsdam, Germany\\(\email{remo.kretschmann@uni-potsdam.de})}
	\and Frank Werner\thanks{Institute of Mathematics, University of Würzburg, Emil-Fischer-Straße 30, 97074 Würzburg, Germany\\(\email{frank.werner@uni-wuerzburg.de})}
}
\shortauthor{R.~Kretschmann, F.~Werner}
\date{2025-03-24}

\hypersetup{
	pdfkeywords = {inverse problem} {hypothesis testing} {ill-posed problem} {Bayesian inference} {deconvolution} {backward heat equation},
}

\begin{document}

\maketitle

\begin{abstract}
This paper is concerned with a Bayesian approach to testing hypotheses in statistical inverse problems. Based on the posterior distribution $\Pi \left(\cdot |Y = y\right)$, we want to infer whether a feature $\langle\varphi, u^\dagger\rangle$ of the unknown quantity of interest $u^\dagger$ is positive. This can be done by the so-called maximum a posteriori test. We provide a frequentistic analysis of this test's properties such as level and power, and prove that it is a regularized test in the sense of Kretschmann et al. (2024). Furthermore we provide lower bounds for its power under classical spectral source conditions in case of Gaussian priors. Numerical simulations illustrate its superior performance both in moderately and severely ill-posed situations.
\end{abstract}

\section{Introduction}

\subsection{Linear-Gaussian set-up}

We consider a linear inverse problem
\begin{equation}
	\label{model1}
	Y = Tu^\dagger + \sigma Z
\end{equation}
where $T$ is a bounded linear operator between two separable Hilbert spaces $\X$ and $\Y$, $u^\dagger$ is an unknown quantity of interest, $\sigma > 0$ a noise level, and $Z$ is a Gaussian white noise process on $\Y$.
Both $Z$ and $Y$ are Hilbert space processes on $\Y$, i.e., equation \eqref{model1} has to be understood as
\begin{equation*}
	Y(g) = \scalprod{Tu^\dagger,g}_\Y + \sigma Z(g) \quad\text{for all}~g \in \Y.
\end{equation*}
Here, the Hilbert space processes $Y,Z$: $\Y \to L^2(\Omega,\mathcal{F},\mu)$ arise from the family $(Z(g))_{g\in\Y}$, $(Z(g))_{g\in\Y}$ of Gaussian random variables $Z(g) \sim \Normal(0,\norm{g}_\Y^2)$ with covariance $\E[Z(g)Z(h)] = \int_\Omega Z(g)Z(h) \di\mu = \scalprod{g,h}_{\Y}$. Note, that for fixed $\omega \in \Omega$, $g \mapsto Z(g,\omega) = Z(g)(\omega)$ is a linear functional on $\Y$, but in general not continuous. That is, $\omega \mapsto Z(\cdot,\omega)$ is a random variable with values in the \emph{algebraic} dual space $\Y^*$ of $\Y$. The same holds true for the whole data $Y$.

If we embed $\Y = \Y^0$ into a scale of Hilbert spaces $\{\Y^r\}_{r \in \R}$ defined by an orthonormal basis $(\phi_k)_{k\in\N}$ of $Y$ and a positive $\ell^2$-sequence $(a_k)_{k\in\N}$ as
\[ \Y^r := \left\{y \in \Y: \sum_{k=1}^\infty a_k^{-2r}\abs{\scalprod{y,\phi_k}_Y}^2 < \infty\right\} \]
for $r \ge 0$, and
\[ \Y^{-r} = \left\{y \in L(\Y,\R): \sum_{k=1}^\infty a_k^{2r}\abs{y(\phi_k)}^2 < \infty\right\} \]
for $r > 0$, then $Z$ takes a value in the \emph{topological} dual space $\mathcal{Z}' = \Y^{-1}$ of $\mathcal{Z} := \Y^{1}$ almost surely, and $\Y$ itself is the Cameron--Martin space of its law, as is described in appendix 7.4 of \cite{Nickl2020Bernstein}. For this reason, we can interpret \eqref{model1} rigorously as an equation in $\mathcal{Z}'$.

Models of the form \eqref{model1} are widely used as prototypes in applications ranging from astrophysics to cell biology, as central limit theorems and asymptotic equivalence statements motivate the Gaussianity of the noise in \eqref{model1}.

\subsection{Estimation and inference}

Due to the wide applicability, estimation of $u^\dagger$ in \eqref{model1} based on the available data $Y$ has been tackled extensively in the literature. We refer to \cite{ehn96, bhmr07} for filter-based methods, to \cite{js91,mr96} for methods based on the singular value decomposition of $T$, and to \cite{d95,as98} for wavelet-based methods. 

However, the focus of this paper is not on the reconstruction of all of $u^\dagger$, but on inference about specific features of $u^\dagger$ that can be described via linear functionals. This is of special interest in applications, as often (in the example of functions) only properties such as modes, monotonicity, or the support of $u^\dagger$ are of interest. 

We will follow the general idea of \cite{KWW:2023} and tackle this issue as a hypothesis testing problem. Therefore, let $\phi \in \X^*$ be a bounded linear functional and $\dualpair{\phi,u^\dagger}$ the \emph{feature of interest}. We want to test 
\begin{equation}\label{test}
H:\dualpair{\phi,u^\dagger} \le 0\qquad\text{vs.}\qquad K: \dualpair{\phi,u^\dagger} > 0,
\end{equation}
and denote $\X_H = \{x \in \X: \dualpair{\phi,x} \le 0\}$ and $\X_K = \{x \in \X: \dualpair{\phi,x} > 0\}$.

Testing hypotheses in statistical inverse problems has so far mostly been considered for global testing problems of the form
\begin{equation}\label{test2}
H_0: u^\dagger = 0 \qquad\text{vs.}\qquad H_1 : u^\dagger \in B, \left\Vert u^\dagger\right\Vert_{\X} > \rho
\end{equation}
with smoothness classes $B \subseteq \X$ and radii $\rho >0$. This situation has, e.g., been investigated in \cite{llm12,iss12,ilm14,mm14}, and in those works, sharp minimax bounds for the corresponding detection thresholds are provided. However, many local features of $u^\dagger$ cannot be described by a global testing problem such as \eqref{test2}. Consequently, also local testing problems \eqref{test} have been discussed in the literature. An early example is \cite{shmd13}, where (multiscale) inference for features in deconvolution models has been investigated. More recently, also more general models have been considered, see e.g. \cite{ebd17,pwm18,depsh19}. In the most recent work \cite{KWW:2023}, a regularized approach to testing local features has been discussed, which resolves two essential problems in previous works. The regularized approach allows to all test features $\phi \in \X^*$, and not only regular features $\phi \in R \left(T^*\right)$. Furthermore, the finite sample properties of regularized tests are way superior to previous approaches. The aim of the paper at hand is to further explore this regularized approach and to investigate it from a Bayesian point of view.
\rev{In particular, we are interested in studying how Tikhonov--Phillips regularized tests arise through Bayesian modelling with a Gaussian prior distribution.}

\subsection{Bayesian treatment}

During the last decade, and inspired by the monograph \cite{ks05} and the survey \cite{s10}, the Bayesian approach to the inverse problem \eqref{model1} has become popular. 
This means, that the unknown, now denoted by $U$, is no longer assumed to be a fixed, deterministic quantity, but a prior distribution $\Pi$ is assigned to it and in what follows, $U \sim \Pi$ is considered random as well. In this paper, we generically consider
\begin{equation}\label{eq:model_bayes}
	 Y = TU + \sigma Z,
	  \end{equation}
and assign a Gaussian prior distribution $\Pi = \Normal(m_0,C_0)$ to $U$. Here, $C_0$ is a symmetric, positive definite, trace class operator, and $U$ and $Z$ are assumed to be independent. We can interpret $U$ as Gaussian process on $\X$ by considering $i_\X \circ U$, where $i_\X$: $\X \to \X'$ denotes the Riesz map $x \mapsto \scalprod{x,\cdot}_\X$, see \cite{Lehtinen1989}.

To clarify the notation, let $\P_u$ for given $u \in \X$ denote the distribution of $Y$ on $\Y^*$, given $U = u$. Then, we interpret $H$ and $K$ as subsets of the class $\{\P_u: u \in \X\}$. Let $\nu$ denote the distribution of $\sigma Z$ on $\Y^*$. The distribution $\P_u = \nu(\cdot - Tu)$ of $Y$, given $U = u$, is absolutely continuous with respect to $\nu$ for all $u \in \X$ with
\[ L(u,Y) := \frac{\di\nu(\cdot - Tu)}{\di\nu}(Y) = \exp \left(\frac{1}{\sigma^2}Y(Tu) - \frac{1}{2\sigma^2}\norm{Tu}_\Y^2\right) \quad \text{a.s.,} \]
see appendix 7.4 of \cite{Nickl2020Bernstein}.
For $y \in \Y^*$, the posterior distribution $\Pi(\cdot|Y = y) = \Normal(m,C)$ on $\X$ is almost surely Gaussian with mean
\begin{equation}
	\label{eq:posterior_mean}
	\begin{aligned}
		m &= m_0 + C_0^\frac12 \left(C_0^\frac12 T^*T C_0^\frac12 + \sigma^2\Id\right)^{-1} C_0^\frac12 T^* \left(y - Tm_0\right) \\
		&= C_0^\frac12 \left(C_0^\frac12 T^*T C_0^\frac12 + \sigma^2\Id\right)^{-1} C_0^\frac12 T^*y
		+ \left[\Id - C_0^\frac12 \left(C_0^\frac12 T^*T C_0^\frac12 + \sigma^2\Id\right)^{-1} C_0^\frac12 T^*T\right] m_0
	\end{aligned}
\end{equation}
and covariance
\[ C = \sigma^2 C_0^\frac12 \left(C_0^\frac12 T^*T C_0^\frac12 + \sigma^2\Id\right)^{-1} C_0^\frac12,
\]
and $(B,y) \mapsto \Pi(B,Y = y)$ is a regular conditional distribution of $U$, given $Y$.
This follows from \cite[Thm~4.2]{Lehtinen1989}, as described in section 2 of \cite{Agapiou2018}.
\rev{As mentioned above, the choice of a Gaussian prior distribution is, i.a., motivated by the expected connection between tests defined in terms of the resulting posterior distribution and Tikhonov--Phillips regularized tests.}
The measure $\Pi(\cdot|Y = y)$ is almost surely absolutely continuous w.r.t. $\Pi$ according to Bayes' theorem with Radon-Nikodym derivative
\[ \frac{\di\Pi(\cdot|Y = y)}{\di\Pi}(U) = \frac{L(U,y)}{\int_\X L(x,y) \di\Pi(x)}. \]

Theoretical properties of the posterior $\Pi(\cdot|Y = y)$ in Bayesian inverse problems have been studied widely in the literature. We refer to \cite{llm15,Agapiou2018} for its contraction properties in the linear case, and, e.g., to \cite{Nickl2020Bernstein} for a frequentistic analysis of its coverage in a nonlinear problem. Besides this, also non-Gaussian distributions have been treated, e.g., in \cite{l12a,l12b,lss09,ls04}.

\subsection{Our contribution and structure of the paper}

The major contribution of the paper at hand is to develop a Bayesian approach for testing problems of the form \eqref{test}. In the following Section 2, we will introduce maximum a posteriori (MAP) tests, which naturally arise from the posterior $\Pi(\cdot|Y = y)$. Within the considered setup, this distribution is Gaussian and hence we are able to explicitly analyze frequentistic properties of the corresponding test such as its level analytically. Interestingly, it turns out that without further a priori assumptions on $u^\dagger$, it is impossible to derive a non-trivial bound for the size of the MAP test (\cref{size_map_test}). This is in agreement with the findings from \cite{KWW:2023}, where such a bound was also obtained only under a priori assumptions. Furthermore, we show that the MAP test can be characterized as a regularized test in the sense of \cite{KWW:2023}, and show that optimal detection properties are almost reachable by choosing the prior covariance $C_0$ appropriately (\cref{inf_prior_inf_reg}). In Section 3, we will then pursue a different analytic approach and consider a priori assumptions in form of classical Hölder-type source conditions and provide bounds for the power of the MAP test under such conditions. Optimality of these bounds can be obtained for an a priori choice of the prior covariance, and we also discuss an a posteriori choice. In Section 4, we finally perform extensive numerical simulations to investigate the finite sample performance of the MAP test both in mildly ill-posed and in severely ill-posed situations. We end this paper with an outlook in Section 5.

\section{Maximum a posteriori tests}

Before we describe the approach to Bayesian testing, let us briefly recall the notation for hypothesis tests and the corresponding frequentistic properties in general. A (non-randomized) test is a measurable mapping $\Psi$, that maps the available data $Y$ either to $0$ or $1$. Given such a test $\Psi$, the quantity $\P_\udag\left[\Psi(Y) = 1\right]$ is called the \emph{size} of the test $\Psi$, where $\P_\udag$ denotes the law of $Y$ in \eqref{eq:model_bayes} given $U = u^\dagger$. 
The maximal size of the test under the hypothesis,
\[ \sup \left\{ \P_\udag\left[\Psi(Y) = 1\right] : \udag \in \X_H \right\} \]
is called \emph{level (of significance)} of the test $\Psi$, measuring the maximal probability of a false rejection (type 1 error). In frequentistic statistics, this quantity is controlled by a user-chosen value $\alpha \in (0,1)$. If $u^\dagger \in \X_K$, i.e., if we are working under the alternative, then the size of $\Psi$ is also called the \emph{power} of this test, and it measures the probability of a correct detection. Clearly, this quantity --- from a frequentistic perspective --- should be as large as possible. 

Given the Bayesian set-up \eqref{eq:model_bayes} and the derived posterior $\Pi \left(\cdot | Y = y\right)$, it seems clear how to construct a Bayesian test. Essentially, the posterior covers all information about the model (including prior and data), and from a Bayesian perspective the hypothesis should be accepted if and only if the set $\X_H$ has a has a higher probability under the posterior $\Pi \left(\cdot | Y = y\right)$ than the alternative $\X_K$. This leads directly to the definition of the \emph{maximum a posteriori (MAP) test} $\Psimap$ by
\[ \Psimap(y) := \begin{cases}
	1 & \text{if}~\Pi(\X_K|Y = y) > \Pi(\X_H|Y = y), \\
	0 & \text{otherwise}.
\end{cases} \]
For this test, $\Psimap(y) = 1$ if $\Pi(\X_K|Y = y) > \frac12$ and $\Psi(y) = 0$ if $\Pi(\X_K|Y = y) \le \frac12$.
In the context of statistical classification, $\Phimap$ is also called a \emph{Bayes classifier} since it minimizes the probability of misclassification from a Bayesian perspective.
In the following, we will study the MAP test $\Psimap$ and its properties \emph{from a frequentistic point of view}, i.e., under the assumption that it is applied to data $Y$ generated by a fixed truth $\udag \in \X$ according to \eqref{model1}.

\begin{remark}
	Here, it becomes clear why we have to consider the hypothesis $\scalprod{\phi,\udag} \le 0$ rather than $\scalprod{\phi,\udag} = 0$ when assigning a Gaussian prior --- the event $\X_H$ needs to have positive prior probability in order to lead to a nontrivial MAP test.
\end{remark}

\begin{remark}
	MAP tests are in general not connected to MAP estimates. While a set of maximal posterior probability is chosen, the mode of the posterior distribution does not play a role in this choice. Instead, the whole posterior distribution has to be taken into account.
\end{remark}

As the following theorem shows, the MAP test can be evaluated in a simple manner.
\begin{theorem}
	\label{form_MAP_test}
	For $y \in \Y^*$, it holds that
	\[ \Psimap(y) = \begin{cases}
		1 & \text{if}~\dualpair{y,\Phimap} > \scalprod{m_0, T^*\Phimap - \phi}_\X, \\
		0 & \text{otherwise},
	\end{cases} \]
	where
	\begin{equation}
		\label{eq:def_Phimap}	
		\Phimap := TC_0^\frac12 \left(C_0^\frac12 T^*T C_0^\frac12 + \sigma^2 \Id\right)^{-1} C_0^\frac12 \phi.
	\end{equation}
\end{theorem}
\begin{proof}
First of all, we express the MAP test in terms of the law $\Pi_\phi(\cdot|Y = y)$ of $\dualpair{\phi, U}$, given $Y = y$, the marginal posterior distribution
\[ \Pi_\phi(\cdot|Y = y) = \Normal(\dualpair{\phi,m},\dualpair{\phi,C\phi}), \]
since $\Pi(\X_K|Y = y) = \Pi_\phi((0,\infty)|Y = y)$.
The cdf $F_\phi$ of $\Pi_\phi(\cdot|Y = y)$ is given by
\[ F_\phi(t) = \Prob{\dualpair{\phi,U} \le t \vert Y = y} = Q\left(\frac{t - \dualpair{\phi,m}}{\dualpair{\phi,C\phi}^{1/2}}\right), \]
where $Q$ denotes the cdf of the standard normal distribution. Now, we have
\begin{equation}
	\label{eq:eval_map_test}
	\begin{aligned}
		\Pi(\X_K|Y = y) > \frac12 \quad&\Leftrightarrow\quad
		\Prob{\dualpair{\phi,U} > 0 \vert Y = y} > \frac12 \quad\Leftrightarrow\quad 
		F_\phi(0) < \frac12 \\
		&\Leftrightarrow\quad \dualpair{\phi,m} > 0.
	\end{aligned}
\end{equation}
\rev{Now, \eqref{eq:posterior_mean} and the definition of $\Phimap$ allow us to express $\scalprod{m,\phi}_\X$ as}
\begin{equation}
	\label{eq:eval_mean}
	\begin{aligned}
		\scalprod{m,\phi}_\X &= \bigdualpair{y, TC_0^\frac12 \left(C_0^\frac12 T^*T C_0^\frac12 + \sigma^2 \Id\right)^{-1} C_0^\frac12 \phi} \\
		&\quad + \bigscalprod{m_0, \left[\Id - T^*T C_0^\frac12 \left(C_0^\frac12 T^*T C_0^\frac12 + \sigma^2 \Id\right)^{-1} C_0^\frac12\right] \phi}_\X \\
		&= \dualpair{y,\Phimap} - \scalprod{m_0, T^*\Phimap - \phi}_\X,
	\end{aligned}
\end{equation}
which finishes the proof.
\end{proof}

\rev{In \cref{form_MAP_test},} we see that the MAP test rejects if the linear estimator $\dualpair{Y,\Phimap}$ exceeds the critical value $\dualpair{m_0,T^*\Phimap - \phi}$ and note that the probe element $\Phimap$ is independent of the prior mean $m_0$.
\rev{Equation \eqref{eq:eval_map_test}, moreover, tells us that the MAP test rejects if the posterior mean displays the feature of interest. In the linear-Gaussian setting, the posterior mean is equal to the posterior mode (or MAP estimate) and thus corresponds to the minimizer of a Tikhonov--Phillips functional. This leads to the following characterization of the probe element $\Phimap$.}

\begin{theorem}
	\label{TP_reg_sol}
	\rev{Assume} that $T$ is a compact, linear operator and that $C_0$ and $T^*T$ commute.
	In this case, $\Phimap$ is the minimizer of the functional
	\begin{equation}
		\label{eq:Tikhonov_Phillips_functional}
		\Phi \mapsto \norm{T^*\Phi - \phi}_\X^2 + \sigma^2\bignorm{C_0^{-\frac12}V^*\Phi}_\Y^2
	\end{equation}
	in $\ran (VC_0^{1/2})$, \rev{where $V$: $\X \to \Y$ is a unitary operator such that $T = V\abs{T}$.}
\end{theorem}
\begin{proof}
\rev{By the assumptions on $T$ and $C_0$,} there exists a common system of eigenvectors $(e_k)_{k\in\N}$ of $T^*T$ and $C_0$ that forms an orthonormal basis of $\overline{\ran T^*} = (\ker T)^\perp$. This yields a singular value decomposition
\[ Tx = \sum_{k\in\N} \tau_k \scalprod{x,e_k}_\X f_k, \quad x \in X, \]
of $T$ with an orthonormal system $(f_k)_{k\in\N}$ in $\Y$ and $\tau_k > 0$ for all $k \in \N$, as well as a spectral decomposition
\[ C_0 x = \sum_{k\in\N} \rho_k \scalprod{x,e_k}_\X e_k, \quad x \in (\ker T)^\perp, \]
of the restriction of $C_0$ to $(\ker T)^\perp$ with $\rho_k > 0$ for all $k \in \N$.
\rev{Moreover, the operator $V$ admits the representation}
\[ Vx = \sum_{k\in\N} \scalprod{x,e_k}_\X f_k \quad \text{for all}~x \in \X. \]
Note that $(\ker T)^\perp$ is invariant both under $C_0^{1/2}$ and $C_0^{1/2}T^*TC_0^{1/2} + \sigma^2\Id$.
This leads to
\begin{equation}
	\label{eq:spectral_Phimap}
	\begin{aligned}
		\Phimap &= TP_{(\ker T)^\perp}C_0^\frac12\left(C_0^\frac12 T^*T C_0^\frac12 + \sigma^2\Id\right)^{-1}C_0^\frac12\phi \\
		&= TC_0^\frac12\left(C_0^\frac12 T^*T C_0^\frac12 + \sigma^2\Id\right)^{-1}C_0^\frac12P_{(\ker T)^\perp}\phi \\
		&= \sum_{k\in\N} \frac{\tau_k\rho_k}{\tau_k^2\rho_k + \sigma^2} \scalprod{\phi,e_k} f_k
		= \sum_{k\in\N} \frac{1}{\tau_k^2 + \sigma^2\rho_k^{-1}} \scalprod{T\phi,f_k} f_k \\
		&= V\left(T^*T + \sigma^2C_0^{-1}\right)^{-1}V^*T\phi,
	\end{aligned}
\end{equation}
where $C_0^{-1}$ only exists as an unbounded linear operator in $\X$ and is not defined on the whole space.
It follows that
\begin{equation*}
	\left(TT^* + \sigma^2 VC_0^{-1}V^*\right)\Phimap = V\left(T^*T + \sigma^2C_0^{-1}\right)V^*\Phimap = T\phi,
\end{equation*}
which implies that $\Phimap$ is the minimizer of \rev{\eqref{eq:Tikhonov_Phillips_functional}.}
\end{proof}

\rev{\Cref{TP_reg_sol} shows that the probe element $\Phimap$ is a Tikhonov-Phillips regularized solution to the ill-posed equation $T^*\Phi_0 = \phi$ with weighted quadratic norm penalty. Here, the ratio of $\sigma^2$ to the size of the prior covariance $C_0$ plays the role of a regularization parameter.}

\begin{theorem}
	\label{size_map_test}
	For $\udag \in \X$, the size of the test $\Psimap$ is given by
	\begin{equation}
		\label{eq:size_map_test}
		\P_\udag\left[\Psimap(Y) = 1\right] = Q\left(\frac{\scalprod{\udag,T^*\Phimap}_\X - \scalprod{m_0,T^*\Phimap - \phi}_\X}{\sigma\norm{\Phimap}_\Y}\right)
	\end{equation}
	with $\Phimap$ as in \eqref{eq:def_Phimap}, where $Q$ denotes the cdf of the standard normal distribution.
\end{theorem}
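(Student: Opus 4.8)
The plan is to reduce the computation to a single Gaussian tail probability by using the explicit description of $\Psimap$ from the previous theorem. By that theorem, $\Psimap(Y)=1$ if and only if $\dualpair{Y,\Phimap} > \scalprod{m_0,T^*\Phimap-\phi}_\X$, with $\Phimap$ as in \eqref{eq:def_Phimap}. Before evaluating the process $Y$ at $\Phimap$, I would first record that $\Phimap$ actually lies in $\Y$ rather than merely in $\Y^*$: after the Riesz identification $C_0^{1/2}$ is bounded on $\X$, the operator $\left(C_0^{1/2}T^*TC_0^{1/2}+\sigma^2\Id\right)^{-1}$ is bounded on $\X$ since $\sigma^2>0$, and $T\colon\X\to\Y$ is bounded; hence $Y(\Phimap)$ is a genuine real-valued random variable and not just a formal expression.

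Next I would insert the frequentist data model \eqref{model1}, i.e.\ $Y(g)=\scalprod{Tu^\dagger,g}_\Y+\sigma Z(g)$, at $g=\Phimap$. This yields $\dualpair{Y,\Phimap}=\scalprod{Tu^\dagger,\Phimap}_\Y+\sigma Z(\Phimap)=\scalprod{\udag,T^*\Phimap}_\X+\sigma Z(\Phimap)$, and by construction of the white noise process $Z(\Phimap)\sim\Normal(0,\norm{\Phimap}_\Y^2)$. Assuming $\Phimap\neq 0$ (if $\Phimap=0$ the test is deterministic and the claimed identity is read with the convention $Q(\pm\infty)\in\{0,1\}$), I would rewrite the rejection event as $\sigma Z(\Phimap) > \scalprod{m_0,T^*\Phimap-\phi}_\X-\scalprod{\udag,T^*\Phimap}_\X$ and divide through by the positive number $\sigma\norm{\Phimap}_\Y$, so that $Z(\Phimap)/\norm{\Phimap}_\Y$ is standard normal. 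Using $1-Q(x)=Q(-x)$ then gives
\[
\P_\udag\left[\Psimap(Y)=1\right] = 1 - Q\!\left(\frac{\scalprod{m_0,T^*\Phimap-\phi}_\X-\scalprod{\udag,T^*\Phimap}_\X}{\sigma\norm{\Phimap}_\Y}\right) = Q\!\left(\frac{\scalprod{\udag,T^*\Phimap}_\X-\scalprod{m_0,T^*\Phimap-\phi}_\X}{\sigma\norm{\Phimap}_\Y}\right),
\]
which is exactly \eqref{eq:size_map_test}.

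There is no genuine obstacle here: once the previous theorem is in hand, the argument is essentially a one-line Gaussian computation. The only points deserving a moment's care are the well-definedness and measurability of $Y(\Phimap)$, which follows from $\Phimap\in\Y$, and the trivial degenerate case $\Phimap=0$; both are disposed of in a sentence.
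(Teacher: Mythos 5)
Your proposal is correct and follows essentially the same route as the paper: apply the characterization $\Psimap(Y)=1 \Leftrightarrow \dualpair{Y,\Phimap} > \scalprod{m_0,T^*\Phimap-\phi}_\X$, substitute the model $Y = T\udag + \sigma Z$, and reduce to a standard Gaussian tail probability via $1-Q(x)=Q(-x)$. The extra remarks on $\Phimap \in \Y$ (so that $Y(\Phimap)$ is a genuine random variable) and on the degenerate case $\Phimap = 0$ are sensible additions that the paper leaves implicit.
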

\begin{proof}
Using the model \eqref{model1}, \eqref{eq:eval_map_test}, \eqref{eq:eval_mean}, and $\dualpair{Z,\Phimap} \sim \Normal(0, \norm{\Phimap}_\X^2)$, we obtain
\begin{align*} 
	\P_\udag[\Psimap(Y) = 1]
	&= \P_\udag[\dualpair{Y,\Phimap} - \dualpair{m_0,T^*\Phimap - \phi} > 0] \\
	&= \Prob{\dualpair{T\udag,\Phimap} + \sigma\dualpair{Z,\Phimap} - \dualpair{m_0,T^*\Phimap - \phi} > 0} \\
	&= \Prob{\dualpair{Z,\Phimap} > \frac{\dualpair{m_0,T^*\Phimap - \phi} - \dualpair{u,T^*\Phimap}}{\sigma}} \\
	&= 1 - Q\left(\frac{\dualpair{m_0,T^*\Phimap - \phi} - \dualpair{u,T^*\Phimap}}{\sigma\norm{\Phimap}}\right) \\
	&= Q\left(\frac{\dualpair{u,T^*\Phimap} - \dualpair{m_0,T^*\Phimap - \phi}}{\sigma\norm{\Phimap}}\right). \qedhere
\end{align*}
\end{proof}


\subsection{Necessity of a priori assumptions}

In this section, we discuss the case that nothing is known about the truth other than that $\udag \in \X$ and establish necessary and sufficient conditions for a MAP test to have a nontrivial level of significance.
We also relate the MAP test to unregularized hypothesis tests discussed in section 1.2 of \cite{KWW:2023}.
First, we observe that in order to obtain a MAP test whose level is smaller than $1$, $\phi$ typically needs to be an eigenvector of $C_0^{1/2}T^*TC_0^{1/2}$.

\begin{theorem}
	\label{not_EV_level_1}
	\rev{Assume that $C_0$ and $T^*T$ commute.}
	If $\phi$ is not an eigenvector of $C_0^{1/2} T^*T C_0^{1/2}$, then $\Psimap$ has level
	\[ \sup \left\{ \P_\udag\left[\Psimap(Y) = 1\right] : \udag \in \X_H \right\} = 1. \]
\end{theorem}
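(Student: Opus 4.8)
The plan is to read off the conclusion from the explicit size formula \eqref{eq:size_map_test} of \cref{size_map_test}. Abbreviating the $\udag$-independent constant there by $c := \scalprod{m_0, T^*\Phimap - \phi}_\X$, we have $\P_\udag\left[\Psimap(Y) = 1\right] = Q\!\left(\frac{\scalprod{\udag, T^*\Phimap}_\X - c}{\sigma\norm{\Phimap}_\Y}\right)$, and since $Q$ is continuous and strictly increasing with $Q(t) < 1$ for all finite $t$ and $Q(t) \to 1$ as $t\to+\infty$, the level $\sup_{\udag\in\X_H}\P_\udag\left[\Psimap(Y) = 1\right]$ equals $1$ if and only if $\sup_{\udag\in\X_H}\scalprod{\udag, T^*\Phimap}_\X = +\infty$ --- provided $\norm{\Phimap}_\Y > 0$, so that the right-hand side above is a genuine finite-denominator expression. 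This positivity is exactly where injectivity of $T$ is used: since $\phi\neq0$, $C_0$ is positive definite, and $C_0^{1/2}T^*TC_0^{1/2} + \sigma^2\Id$ is boundedly invertible, each factor in \eqref{eq:def_Phimap} sends $\phi$ to a nonzero vector, and injectivity of $T$ then gives $\Phimap\neq0$. So it remains to prove that $\sup_{\udag\in\X_H}\scalprod{\udag, T^*\Phimap}_\X = +\infty$.

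Here I would bring in the commutation hypothesis to compute $T^*\Phimap$. As $C_0$ and $T^*T$ commute, so do $C_0^{1/2}$ and $T^*T$, and hence $B := C_0^{1/2}T^*TC_0^{1/2} = C_0T^*T$ commutes with $C_0^{1/2}$ and with $(B+\sigma^2\Id)^{-1}$; applying $T^*$ to \eqref{eq:def_Phimap} and commuting $C_0^{1/2}$ through the resolvent collapses the expression to $T^*\Phimap = B(B+\sigma^2\Id)^{-1}\phi = g(B)\phi$ with $g(\lambda) := \lambda/(\lambda+\sigma^2)$. On $[0,\norm{B}]$, which contains the spectrum of $B$, the map $g$ is continuous and strictly increasing (as $\sigma>0$), with continuous inverse $g^{-1}(t) = \sigma^2 t/(1-t)$ there since $g(\norm{B}) < 1$; by the functional calculus --- approximating $g^{-1}$ uniformly by polynomials and using $g^{-1}(g(B)) = B$ --- a nonzero vector is an eigenvector of $g(B)$ if and only if it is an eigenvector of $B$. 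Consequently $T^*\Phimap = g(B)\phi$ is a scalar multiple of $\phi$ if and only if $\phi$ is an eigenvector of $g(B)$, equivalently of $B = C_0^{1/2}T^*TC_0^{1/2}$.

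By hypothesis $\phi$ is not an eigenvector of $C_0^{1/2}T^*TC_0^{1/2}$, so $T^*\Phimap\notin\spn\{\phi\}$. Writing the orthogonal decomposition $T^*\Phimap = \beta\phi + w$ in $\X$ (with $\phi$ identified with its Riesz representative in $\X$), we have $w\perp\phi$ and $w\neq0$. Then for each $t>0$ the element $tw$ satisfies $\scalprod{\phi, tw}_\X = 0 \le 0$, hence $tw\in\X_H$, while $\scalprod{tw, T^*\Phimap}_\X = t\norm{w}_\X^2 \to +\infty$ as $t\to+\infty$; thus $\sup_{\udag\in\X_H}\scalprod{\udag, T^*\Phimap}_\X = +\infty$ and the level of $\Psimap$ is $1$. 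The only genuinely non-computational point is the spectral fact that $g(B)$ and $B$ have the same eigenvectors, which relies on the strict monotonicity of $g$ (from $\sigma>0$) and on the commutation hypothesis, the latter being what yields the clean representation $T^*\Phimap = g(B)\phi$ in the first place.
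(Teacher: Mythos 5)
Your proof is correct and follows essentially the same route as the paper's: both reduce to the identity $T^*\Phimap = B(B+\sigma^2\Id)^{-1}\phi$ via the commutation hypothesis, identify eigenvectors of $B(B+\sigma^2\Id)^{-1}$ with those of $B=C_0^{1/2}T^*TC_0^{1/2}$, and then let $\scalprod{tw,T^*\Phimap}_\X$ blow up along the hyperplane $\{u:\scalprod{\phi,u}_\X=0\}\subseteq\X_H$. The paper merely phrases this as a contraposition and replaces your functional-calculus step by the one-line algebraic inversion $B=\sigma^2((\Id-A)^{-1}-\Id)$; your explicit check that $\Phimap\neq0$ is a welcome addition left implicit there.
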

\begin{proof}
We prove this statement by contraposition.
First of all, we compute
\begin{equation*}
	\begin{aligned}
		T^*\Phimap &= T^*T C_0^\frac12 \left(C_0^\frac12 T^*T C_0^\frac12 + \sigma^2 \Id\right)^{-1} C_0^\frac12 \phi 
		= C_0^\frac12 T^*T C_0^\frac12 \left(C_0^\frac12 T^*T C_0^\frac12 + \sigma^2 \Id\right)^{-1} \phi \\
		&= \left[\Id - \sigma^2\left(C_0^\frac12 T^*T C_0^\frac12 + \sigma^2 \Id\right)^{-1}\right] \phi
		= \left[\Id - \left(\Id + \frac{1}{\sigma^2} C_0^\frac12 T^*T C_0^\frac12\right)^{-1}\right] \phi,
	\end{aligned}
\end{equation*}
where we used that $C_0^{1/2}$ commutes with $T^*T$.
Let us assume that the test $\Psimap$ has a level of significance smaller than $1$. Then, the term 
\[ \scalprod{T^*\Phimap, \udag}_\X = \bigscalprod{\Id - \left(\Id + \sigma^{-2}C_0^{\frac12}T^*TC_0^\frac12\right)^{-1}\phi, \udag}_\X \]
in \eqref{eq:size_map_test} is uniformly bounded from above for all $\udag \in \X_H$.
In particular, the subspace
\[ W := \{u \in \X: \scalprod{\phi,u} = 0\} \]
is invariant under the self-adjoint operator 
\[ A := \Id - \left(\Id + \frac{1}{\sigma^2}C_0^\frac12 T^*T C_0^\frac12\right)^{-1}, \]
because if there existed $w \in W$ with $\scalprod{T^*\Phimap,w} = \scalprod{A\phi,w} = \scalprod{\phi, Aw} \neq 0$, then $t w \in W \subset \X_H$ as well for all $t \in \R$ and $\scalprod{T^*\Phimap,t w} = t\scalprod{\phi,Aw}$ could be arbitrarily large.
Since $\scalprod{A\phi,w} = \scalprod{\phi,Aw} = 0$ for all $w \in W$ we have $A\phi \in W^\perp = \spn \{\phi\}$, i.e. $\phi$ is an eigenvector of $A$. Therefore, $\phi$ is also an eigenvector of $C_0^{1/2}T^*TC_0^{1/2} = \sigma^2((\Id - A)^{-1} - \Id)$.
\end{proof}

The following theorem shows that the condition that $\phi$ is an eigenvector of $C_0^{1/2}T^*TC_0^{1/2}$ is rather restrictive.

\begin{theorem}
	\label{EV_span}
	Let $T$ be an injective, compact, linear operator on $\X$ and assume that $C_0$ and $T^*T$ commute.
	Then any eigenvector $\phi$ of $C_0^{1/2} T^*T C_0^{1/2}$ is an element of $\spn \left\{e_k: k \in \N\right\}$, where $(e_k)_{k\in\N}$ is a joint eigenbasis of $C_0$ and $T^*T$.
	Moreover,
	\[ \left\{ x \in \X: \scalprod{x,e_k}_\X = 0~\text{for all}~k \in \N~\text{with}~\scalprod{\phi,e_k}_\X = 0 \right\} \]
	is an eigenspace of $C_0^{1/2}T^*TC_0^{1/2}$.
\end{theorem}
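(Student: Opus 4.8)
The plan is to diagonalize everything simultaneously. Since $T$ is compact and injective and $C_0$ commutes with $T^*T$, there is a joint orthonormal eigenbasis $(e_k)_{k\in\N}$ of $\X$ with $C_0 e_k = \gamma_k e_k$ and $T^*T e_k = \tau_k e_k$, where $\gamma_k > 0$ (positive definiteness), $\tau_k > 0$ (injectivity of $T$), and $\gamma_k \to 0$, $\tau_k \to 0$. Consequently $C_0^{1/2}T^*TC_0^{1/2} e_k = \gamma_k \tau_k e_k =: \lambda_k e_k$ with $\lambda_k > 0$ and $\lambda_k \to 0$. The first step is to show that any eigenvector $\phi$ of $A_0 := C_0^{1/2}T^*TC_0^{1/2}$ lies in $\spn\{e_k : k \in \N\}$. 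Expand $\phi = \sum_k \beta_k e_k$ with $\beta_k = \scalprod{\phi,e_k}_\X$; if $A_0\phi = \mu\phi$ for some $\mu$, then comparing coefficients gives $\lambda_k\beta_k = \mu\beta_k$ for every $k$, so $\beta_k = 0$ whenever $\lambda_k \neq \mu$. Since $\lambda_k \to 0$, the eigenvalue $\mu$ must satisfy $\mu > 0$ (otherwise $\beta_k = 0$ for all $k$, contradicting $\phi \neq 0$ — note $0$ is not an eigenvalue here because $A_0$ is injective on its range, though one should phrase this carefully), and crucially the set $\{k : \lambda_k = \mu\}$ is finite, because $\lambda_k \to 0 \neq \mu$. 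Hence only finitely many $\beta_k$ are nonzero, i.e.\ $\phi \in \spn\{e_k : k\in\N\}$, in fact in the finite-dimensional eigenspace $E_\mu := \spn\{e_k : \lambda_k = \mu\}$.

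For the second claim, set $I_0 := \{k \in \N : \scalprod{\phi,e_k}_\X = 0\}$ and $N := \{x \in \X : \scalprod{x,e_k}_\X = 0 \text{ for all } k \in I_0\}$. Writing $J := \N \setminus I_0 = \{k : \beta_k \neq 0\}$, which by the previous paragraph is a finite subset of $\{k : \lambda_k = \mu\}$, we have $N = \overline{\spn\{e_k : k \in J\}} = \spn\{e_k : k \in J\}$ since $J$ is finite. Because $J \subseteq \{k : \lambda_k = \mu\}$, every $e_k$ with $k \in J$ is an eigenvector of $A_0$ with the \emph{same} eigenvalue $\mu$, so $N$ is contained in the eigenspace $E_\mu$; and $N$ is manifestly $A_0$-invariant with $A_0|_N = \mu\,\Id_N$, so $N$ itself is an eigenspace of $A_0$ (the eigenspace being all of $N$, not merely contained in one). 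This is exactly the asserted statement.

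I expect the only subtle point to be the handling of the eigenvalue $0$ and the precise justification that $\{k : \lambda_k = \mu\}$ is finite — one must rule out $\mu = 0$ (using that $\phi \neq 0$ forces $\beta_k \neq 0$ for some $k$, and $\lambda_k > 0$ for all $k$, so the eigenvalue equation $\lambda_k\beta_k = \mu\beta_k$ at that index gives $\mu = \lambda_k > 0$) and then invoke $\lambda_k \to 0$ to get finiteness. Everything else is a routine coefficient comparison in the eigenbasis. A small amount of care is also needed in the degenerate edge case where $I_0 = \N$ would force $\phi = 0$, which cannot happen, so $J \neq \emptyset$ and $N$ is a genuine nonzero eigenspace.
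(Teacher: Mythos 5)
Your proposal is correct and follows essentially the same route as the paper: joint diagonalization of $C_0$ and $T^*T$ in an orthonormal eigenbasis, comparison of coefficients in the eigenvalue equation, and the observation that the eigenvalues $\tau_k^2\rho_k \to 0$ forces all but finitely many coefficients of $\phi$ to vanish, after which the second claim follows because the surviving $e_k$ all share the eigenvalue $\mu$. Your extra care in ruling out the eigenvalue $0$ is a point the paper simply assumes by writing $\gamma > 0$, but the substance of the argument is identical.
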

\begin{proof}
By assumption, $T^*T$ and $C_0$ are jointly diagonalizable with eigensystems $(\tau_k^2,e_k)_{k\in\N}$ an $(\rho_k,e_k)_{k\in\N}$, respectively, and $(e_k)_{k\in\N}$ forms an orthonormal basis of $\X$ due to the injectivity of $T$.
Let $\phi \in \X$ be an eigenvector of $C_0^{1/2}T^*TC_0^{1/2}$ associated with the eigenvalue $\gamma > 0$.
Then, we have
\[ \sum_{k=1}^\infty \tau_k^2 \rho_k \scalprod{e_k,\phi}_\X e_k = C_0^\frac12 T^*T C_0^\frac12\phi = \gamma\phi = \sum_{k=1}^\infty \gamma \scalprod{e_k,\phi}_\X e_k. \]
Comparing coefficients yields that $\tau_k^2\rho_k = \gamma$ for all $k \in \N$ with $\scalprod{e_k,\phi}_\X \neq 0$. Since both $\tau_k$ and $\rho_k$ tend to $0$ as $k \to \infty$, this implies that all but finitely many components $\scalprod{e_k,\phi}_\X$ must be zero.
Note that any $e_k$ with $\scalprod{\phi,e_k}_\X \neq 0$ is an eigenvector of $C_0^{1/2}T^*TC_0^{1/2}$ as well, and thus any $x \in \X$ that satisfies $\scalprod{x,e_k}_\X = 0$ for all $k \in \N$ with $\scalprod{\phi,e_k}_\X = 0$.
\end{proof}

\begin{corollary}
	\label{not_span_level_1}
	If $T$ is an injective, compact, linear operator and $C_0$ and $T^*T$ commute, then the MAP test $\Psimap$ has level $1$ for any $\phi \in \X \setminus \spn \{e_k: k \in \N\}$, where $(e_k)_{k\in\N}$ is a joint eigenbasis of $C_0$ and $T^*T$.
\end{corollary}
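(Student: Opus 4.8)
The plan is to obtain this as a direct consequence of \cref{not_EV_level_1,EV_span}. First I would observe that the hypotheses of the corollary --- $T$ injective, compact, linear, and $C_0$ commuting with $T^*T$ --- are precisely what is needed for both of those results to apply simultaneously, and that they also guarantee that $(e_k)_{k\in\N}$ is an orthonormal basis of $\X$, so that $\spn\{e_k : k \in \N\}$ is a genuinely well-defined subspace.

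The argument itself is a one-line chain of contrapositives. By \cref{EV_span}, every eigenvector of $C_0^{1/2} T^*T C_0^{1/2}$ lies in $\spn\{e_k : k \in \N\}$; hence any $\phi \in \X \setminus \spn\{e_k : k \in \N\}$ fails to be an eigenvector of $C_0^{1/2} T^*T C_0^{1/2}$. \cref{not_EV_level_1} then immediately yields $\sup\{\P_\udag[\Psimap(Y) = 1] : \udag \in \X_H\} = 1$, which is the claim.

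The only point that deserves a sentence of care is that \cref{EV_span} is stated for eigenvectors associated with a \emph{positive} eigenvalue $\gamma > 0$, so one should rule out the eigenvalue $0$. But injectivity of $T$ forces $T^*T$ to be injective (since $\scalprod{T^*Tx,x}_\X = \norm{Tx}_\Y^2$), and $C_0^{1/2}$ is injective because $C_0$ is positive definite; consequently $C_0^{1/2} T^*T C_0^{1/2}$ is injective and $0$ is not an eigenvalue, so no eigenvectors are omitted by \cref{EV_span}. Beyond this trivial check there is no obstacle: the corollary is a corollary in the literal sense, and I would simply spell out the two-step implication and this injectivity remark.
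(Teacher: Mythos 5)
Your argument is exactly the paper's proof: combine \cref{EV_span} (every eigenvector of $C_0^{1/2}T^*TC_0^{1/2}$ lies in $\spn\{e_k : k\in\N\}$) with \cref{not_EV_level_1} to conclude the level is $1$. The extra remark ruling out the eigenvalue $0$ via injectivity is a sensible bit of added care but does not change the route.
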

\begin{proof} 
	By \cref{EV_span}, $\phi$ is not an eigenvector of $C_0^{1/2}T^*TC_0^{1/2}$, so that $\Psimap$ has level $1$ by \cref{not_EV_level_1}.
\end{proof}

We see that without a priori assumptions on $\udag$, it can typically not be expected that the MAP test has a level $\alpha < 1$. If, on the other hand, $\phi$ is an eigenvector of $C_0^{1/2}T^*TC_0^{1/2}$, the situation is different.

\begin{theorem}
	\label{level_alpha_eigenvector}
	Assume that $C_0$ and $T^*T$ commute and let $\alpha \in (0,1)$. If $\phi \in \X$ is an eigenvector of $C_0^{1/2} T^*T C_0^{1/2}$ associated with the eigenvalue $\gamma > 0$ and the prior mean $m_0 \in \X$ satisfies
	\[ \scalprod{m_0,\phi}_\X = \frac{\sigma^2 + \gamma}{\sigma} \norm{\Phimap}_\Y Q^{-1}(\alpha), \]
	then the test $\Psimap$ has level of significance
	\[ \sup \left\{ \P_\udag\left[\Psimap(Y) = 1\right] : \udag \in \X_H \right\} = \alpha. \]
\end{theorem}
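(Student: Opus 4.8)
The plan is to feed the eigenvector structure of $\phi$ into the explicit size formula \eqref{eq:size_map_test} of \cref{size_map_test}. Since $Q$ is continuous and strictly increasing, the level $\sup\{\P_\udag[\Psimap(Y)=1]:\udag\in\X_H\}$ equals $Q$ applied to the supremum over $\udag\in\X_H$ of the numerator $\scalprod{\udag,T^*\Phimap}_\X-\scalprod{m_0,T^*\Phimap-\phi}_\X$, provided that supremum is finite --- in contrast to the situation of \cref{not_EV_level_1}, where it is $+\infty$. So the whole argument reduces to computing $T^*\Phimap$ and then this supremum.

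For the first step I would reuse the computation from the proof of \cref{not_EV_level_1}: since $C_0^{1/2}$ commutes with $T^*T$ one has $T^*\Phimap=\big[\Id-(\Id+\sigma^{-2}C_0^{1/2}T^*TC_0^{1/2})^{-1}\big]\phi$. As $\phi$ is an eigenvector of $C_0^{1/2}T^*TC_0^{1/2}$ with eigenvalue $\gamma>0$, applying this (spectrally defined) operator to $\phi$ collapses the expression to $T^*\Phimap=\frac{\gamma}{\sigma^2+\gamma}\,\phi$, hence $T^*\Phimap-\phi=-\frac{\sigma^2}{\sigma^2+\gamma}\,\phi$ and $\scalprod{\udag,T^*\Phimap}_\X=\frac{\gamma}{\sigma^2+\gamma}\scalprod{\phi,\udag}_\X$. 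As a byproduct $T^*\Phimap\neq 0$ (because $\gamma>0$ and $\phi\neq 0$), so $\Phimap\neq 0$ and the denominator $\sigma\norm{\Phimap}_\Y$ in \eqref{eq:size_map_test} is strictly positive, making the formula meaningful.

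Next I would take the supremum over $\X_H=\{x\in\X:\scalprod{\phi,x}_\X\le 0\}$. Because $\gamma>0$, the linear functional $\udag\mapsto\frac{\gamma}{\sigma^2+\gamma}\scalprod{\phi,\udag}_\X$ is nonpositive on $\X_H$ and vanishes at $\udag=0\in\X_H$, so $\sup_{\udag\in\X_H}\scalprod{\udag,T^*\Phimap}_\X=0$ and the supremum of the numerator equals $-\scalprod{m_0,T^*\Phimap-\phi}_\X=\frac{\sigma^2}{\sigma^2+\gamma}\scalprod{m_0,\phi}_\X$. Consequently the level equals $Q\!\big(\tfrac{\sigma}{\sigma^2+\gamma}\cdot\tfrac{\scalprod{m_0,\phi}_\X}{\norm{\Phimap}_\Y}\big)$. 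Substituting the assumed value $\scalprod{m_0,\phi}_\X=\frac{\sigma^2+\gamma}{\sigma}\norm{\Phimap}_\Y\,Q^{-1}(\alpha)$ turns the argument of $Q$ into $Q^{-1}(\alpha)$, which yields level $\alpha$ and finishes the proof.

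The main obstacle is bookkeeping rather than conceptual: one must justify pulling the supremum through the monotone $Q$ (legitimate precisely because the supremum of the numerator is finite and attained, which relies on $\gamma>0$ and on $0\in\X_H$), check that $\norm{\Phimap}_\Y\neq 0$ so that \eqref{eq:size_map_test} is well defined, and keep the factors $\sigma^2$ versus $\sigma^2+\gamma$ straight so that the stated calibration of $\scalprod{m_0,\phi}_\X$ comes out exactly.
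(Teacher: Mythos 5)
Your proposal is correct and follows essentially the same route as the paper's proof: compute $T^*\Phimap=\frac{\gamma}{\sigma^2+\gamma}\phi$ from the eigenvector assumption, observe that $\scalprod{\udag,T^*\Phimap}_\X\le 0$ on $\X_H$ with equality at $\udag=0$, and substitute the calibration of $\scalprod{m_0,\phi}_\X$ into the size formula of \cref{size_map_test} to obtain level exactly $\alpha$. The only additions are your explicit checks that $\Phimap\neq 0$ and that the supremum passes through the monotone $Q$, which the paper leaves implicit but which are harmless and correct.
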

\begin{proof}
By assumption, we have $C_0^{1/2} T^*T C_0^{1/2} \phi = \gamma \phi$, which yields
\begin{equation*}
	T^*\Phimap = \left(\Id - \left(\Id + \sigma^{-2}C_0^\frac12 T^*T C_0^\frac12\right)^{-1}\right)\phi
	= \left(1 - \left(1 + \frac{\gamma}{\sigma^2}\right)^{-1}\right)\phi
	= \frac{\gamma}{\gamma + \sigma^2}\phi.
\end{equation*}
It follows that
\[ \scalprod{\udag,T^*\Phimap}_\X =  \frac{\gamma}{\gamma + \sigma^2}\scalprod{\udag,\phi}_\X \le 0 \quad \text{for all}~\udag \in \X_H \]
with equality for $\udag = 0$.
Moreover, we have
\[ T^*\Phimap - \phi = -\left(1 + \frac{\gamma}{\sigma^2}\right)^{-1}\phi = -\frac{\sigma^2}{\sigma^2 + \gamma}\phi, \]
which leads to
\[ \scalprod{m_0,T^*\Phimap - \phi}_\X = -\frac{\sigma^2}{\sigma^2 + \gamma}\scalprod{m_0,\phi}_\X = -\sigma\norm{\Phimap}Q^{-1}(\alpha). \]
Now, it follows from \cref{size_map_test} that
\[ \P_\udag[\Psimap(Y) = 1] = Q\left(\frac{\dualpair{u,T^*\Phimap} - \dualpair{m_0,T^*\Phimap - \phi}}{\sigma\norm{\Phimap}}\right) \le \alpha \]
for all $\udag \in \X_H$ with equality for $\udag = 0$.
\end{proof}

In case that $C_0$ and $T^*T$ commute, any eigenvector $\phi$ of $C_0^{1/2}T^*TC_0^{1/2} = T^*TC_0$ lies in $\ran T^*$.
This way, there exists $\Phi_0 \in \Y$ such that $T^*\Phi_0 = \phi$. We consider the unbiased frequentist estimator
\[ \scalprod{Y,\Phi_0} = \scalprod{Tu^\dagger + \sigma Z,\Phi_0} = \scalprod{u^\dagger,\phi} + \sigma\scalprod{Z,\Phi_0} \]
for $\scalprod{u^\dagger,\phi}$. It gives rise to the \emph{unregularized} test
\[ \Psi_0(Y) = \begin{cases}
	1 & \text{if}~\scalprod{Y,\Phi_0} \ge c, \\
	0 & \text{otherwise}.
\end{cases} \]
We obtain a level-$\alpha$ test by choosing $c := -\sigma\norm{\Phi_0}Q^{-1}(\alpha)$.
This way,
\begin{align*}
	P_u\{\scalprod{Y,\Phi_0} \ge c\} 
	&= P_u\left\{\scalprod{Z,\Phi_0} \ge \frac{c}{\sigma} - \frac{\scalprod{\udag,\phi}}{\sigma}\right\}
	= P_u\left\{\scalprod{Z,\Phi_0} \le \frac{\scalprod{\udag,\phi}}{\sigma} - \frac{c}{\sigma}\right\} \\
	&\le P_u\left\{\scalprod{Z,\Phi_0} \le -\frac{c}{\sigma}\right\}
	= Q\left(-\frac{c}{\sigma\norm{\Phi_0}}\right) = \alpha
\end{align*}
for all $\udag \in \X_H$ with equality for $\udag = 0$.
The MAP test $\Psimap$ is related to the unregularized test $\Phi_0$ as follows.

\begin{theorem}
	\label{map_unreg_equal}
	Let $T$ be an injective, compact, linear operator and assume that $C_0$ and $T^*T$ commute.
	If $\phi \in \X$ is an eigenvector of $C_0^{1/2}T^*TC_0^{1/2}$, then the level-$\alpha$ MAP test $\Psimap$ with $m_0$ chosen as in \cref{level_alpha_eigenvector} is equal to the unregularized level-$\alpha$ test $\Psi_0$ based upon the estimator $\scalprod{Y,\Phi_0}$, where $T^*\Phi_0 = \phi$. Moreover, both tests are uniformly most powerful.
\end{theorem}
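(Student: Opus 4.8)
I would prove the identity $\Psimap = \Psi_0$ and the optimality statement separately, as neither relies on the other.

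\emph{Equality of the tests.} Since $\phi$ is an eigenvector of $C_0^{1/2}T^*TC_0^{1/2}$, say with eigenvalue $\gamma > 0$, I would first carry out the same computation as in the proof of \cref{level_alpha_eigenvector}, but now for $\Phimap$ itself: working in a joint eigenbasis $(e_k)_{k\in\N}$ of $C_0$ and $T^*T$ (or using that $C_0^{1/2}$ commutes with $T^*T$ and hence with the resolvent) gives $\Phimap = (\gamma+\sigma^2)^{-1}TC_0\phi$. Therefore the representer $\Phi_0$ of $\phi$ that lies in $\ran T$ is $\Phi_0 = \gamma^{-1}TC_0\phi$, and
\[ \Phimap = \frac{\gamma}{\gamma+\sigma^2}\,\Phi_0, \qquad \norm{\Phimap}_\Y = \frac{\gamma}{\gamma+\sigma^2}\,\norm{\Phi_0}_\Y . \]
Inserting the prior-mean condition of \cref{level_alpha_eigenvector} into the explicit characterisation of $\Psimap$ from the first theorem of this section, $\Psimap(y)=1$ is equivalent to $\dualpair{y,\Phimap} > -\sigma\norm{\Phimap}_\Y\,Q^{-1}(\alpha)$; dividing by the positive scalar $\gamma/(\gamma+\sigma^2)$ this reads $\dualpair{y,\Phi_0} > -\sigma\norm{\Phi_0}_\Y\,Q^{-1}(\alpha) = c$. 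Hence $\Psimap$ and $\Psi_0$ have identical rejection rules except on $\{y:\dualpair{y,\Phi_0}=c\}$, which is a $\P_u$-null set for every $u\in\X$ because $\dualpair{Y,\Phi_0}$ is a nondegenerate Gaussian under $\P_u$ (here $\Phi_0\neq 0$ as $T$ is injective and $C_0$ positive definite); so the two tests coincide $\P_u$-a.s.\ for all $u$ and in particular share the same power function.

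\emph{Optimality.} It remains to show $\Psi_0$ is uniformly most powerful, and I would do this by producing a least favourable null point for each alternative. Fix $u_1\in\X_K$, put $\delta := \dualpair{\phi,u_1} > 0$, and set
\[ u_0 := u_1 - \frac{\lambda}{\gamma}\,C_0\phi, \qquad \lambda := \frac{\gamma\,\delta}{\scalprod{C_0\phi,\phi}_\X} > 0 . \]
Then $\dualpair{\phi,u_0} = \delta - (\lambda/\gamma)\scalprod{C_0\phi,\phi}_\X = 0$, so $u_0\in\X_H$, while $T(u_1-u_0) = (\lambda/\gamma)\,TC_0\phi = \lambda\Phi_0$. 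By the likelihood formula of Section~1, $\frac{\di\P_{u_1}}{\di\P_{u_0}}(Y) = \exp(\sigma^{-2}\,Y(T(u_1-u_0)) + \mathrm{const}) = \exp(\sigma^{-2}\lambda\,\dualpair{Y,\Phi_0} + \mathrm{const})$ is strictly increasing in $\dualpair{Y,\Phi_0}$. Hence the Neyman--Pearson most powerful test of size $\alpha$ for $\P_{u_0}$ against $\P_{u_1}$ rejects exactly when $\dualpair{Y,\Phi_0}$ exceeds a threshold, and since $\dualpair{Y,\Phi_0}\sim\Normal(0,\sigma^2\norm{\Phi_0}_\Y^2)$ under $\P_{u_0}$ (as $\dualpair{\phi,u_0}=0$) this threshold equals $-\sigma\norm{\Phi_0}_\Y\,Q^{-1}(\alpha)=c$, with no randomisation needed by continuity of the Gaussian law. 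Thus this most powerful test equals $\Psi_0$ up to a $\P_u$-null set. Now for any test $\Psi$ with $\sup_{u\in\X_H}\P_u[\Psi(Y)=1]\le\alpha$ we have $\P_{u_0}[\Psi(Y)=1]\le\alpha$, so the Neyman--Pearson lemma yields $\P_{u_1}[\Psi(Y)=1]\le\P_{u_1}[\Psi_0(Y)=1]$; since $u_1\in\X_K$ was arbitrary and $\Psi_0$ is itself of level $\alpha$ (shown before the theorem), $\Psi_0$, and hence $\Psimap$, is uniformly most powerful.

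The step I expect to be the main obstacle is the construction of the least favourable point $u_0$: it is available precisely because $\phi$ being an eigenvector of $C_0^{1/2}T^*TC_0^{1/2}$ forces $\phi\in\ran T^*$ with the distinguished primal representer $\Phi_0\in\ran T$, which is what allows $T(u_1-u_0)$ to be made proportional to $\Phi_0$ and the Neyman--Pearson statistic to collapse to $\dualpair{Y,\Phi_0}$ alone. Without the eigenvector hypothesis the MAP test does not even have level $\alpha$ by \cref{not_EV_level_1}, so there is no hope of this argument establishing optimality.
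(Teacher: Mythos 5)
Your proof is correct, and both halves take a genuinely different route from the paper's. For the equality of the tests, the paper shows that $\Phi_0$ is an eigenvector of $TC_0T^*$ with eigenvalue $\gamma$, computes $\norm{\Phimap}_\Y = \tfrac{\gamma}{\gamma+\sigma^2}\norm{\Phi_0}_\Y$, and then argues that the two tests agree because $\tfrac{\gamma}{\gamma+\sigma^2}\scalprod{Y,\Phi_0}$ and $\dualpair{Y,\Phimap}$ are equal in distribution with matched thresholds; you instead establish the stronger pointwise identity $\Phimap = \tfrac{\gamma}{\gamma+\sigma^2}\Phi_0$ (with $\Phi_0 = \gamma^{-1}TC_0\phi$ the representer in $\overline{\ran T}$, the same normalisation the paper uses implicitly when it writes $TC_0T^*\Phi_0=\gamma\Phi_0$), from which equality of the rejection regions follows by simply rescaling the inequality; this is cleaner, since equality in distribution of two statistics does not by itself force the tests to coincide as functions of $y$, whereas proportionality of the probe elements does (up to the boundary $\{\dualpair{y,\Phi_0}=c\}$, which you correctly dispose of as a $\P_u$-null set). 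For optimality, the paper invokes the monotone-likelihood-ratio theorem (Lehmann--Romano, Thm.~3.4.1) for the one-parameter family of laws of the scalar statistic indexed by $\scalprod{\udag,\phi}$, which leaves implicit why no test using the full infinite-dimensional observation $Y$ can outperform one based on $\scalprod{Y,\Phi_0}$; your Neyman--Pearson argument with the explicitly constructed least favourable null point $u_0 = u_1 - \tfrac{\lambda}{\gamma}C_0\phi$ closes exactly that gap, because the likelihood ratio $\di\P_{u_1}/\di\P_{u_0}$ is a function of the whole datum $Y$ that happens to factor through $\dualpair{Y,\Phi_0}$, so the comparison is against \emph{all} level-$\alpha$ tests. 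The computations supporting this ($\dualpair{\phi,u_0}=0$, $T(u_1-u_0)=\lambda\Phi_0$, the threshold matching $c=-\sigma\norm{\Phi_0}_\Y Q^{-1}(\alpha)$ under $\P_{u_0}$) all check out, and your closing remark correctly identifies the eigenvector hypothesis as what makes the least favourable point available.
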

\begin{proof}
First of all, we show that $\Phi_0$ is an eigenvector of $TC_0T^*$ associated with the eigenvalue $\gamma$, where $\gamma > 0$ is the eigenvalue associated with the eigenvector $\phi$ of $C_0^{1/2}T^*TC_0^{1/2}$.
By assumption, $T^*T$ and $C_0$ are jointly diagonalizable. Let $(\tau_k^2,e_k,f_k)_{k\in\N}$ denote a singular system of $T$ and $(\rho_k,e_k)_{k\in\N}$ an eigensystem of $C_0$.
By assumption, $\tau_k^2\rho_k = \gamma$ for all $k \in \N$ with $\scalprod{\phi,e_k}_\X \neq 0$.
Moreover, we have $\scalprod{\phi,e_k}_\X = \scalprod{T^*\Phi_0,e_k}_\X = \tau_k\scalprod{\Phi_0,f_k}_\Y$ and $\tau_k > 0$ for all $k \in \N$.
Therefore,
\[ \scalprod{TC_0T^*\Phi_0,f_k}_\Y = \tau_k^2\rho_k\scalprod{\Phi_0,f_k}_\Y = \gamma\scalprod{\Phi_0,f_k}_\Y \]
for all $k \in \N$ with $\scalprod{\Phi_0,f_k}_\Y \neq 0$, which implies that $TC_0T^*\Phi_0 = \gamma\Phi_0$.
Next, we compute
\[ \norm{\Phimap}_\Y = \norm{TC_0(T^*TC_0 + \sigma^2\Id)^{-1}\phi}_\Y = \frac{\gamma^\frac12}{\gamma + \sigma^2} \norm{C_0^{\frac12}\phi}_\X. \]
By the previous considerations,
\[ \scalprod{\phi,C_0\phi} = \scalprod{\Phi_0,TC_0T^*\Phi_0} = \gamma\scalprod{\Phi_0,\Phi_0}. \]
This yields $\norm{\Phimap}_\Y = \frac{\gamma}{\gamma + \sigma^2} \norm{\Phi_0}_\Y$. The estimator
\[ \dualpair{Y,\Phimap} = \scalprod{T\udag,\Phimap}_\Y + \sigma\dualpair{Z,\Phimap} = \scalprod{\udag,T^*\Phimap}_\X + \sigma\dualpair{Z,\Phimap} \]
has a Gaussian distribution with mean $\scalprod{\udag,T^*\Phimap}_\X = \frac{\gamma}{\gamma + \sigma^2}\scalprod{\udag,\phi}_\X$ and variance $\sigma^2\norm{\Phimap}_\Y^2$.
Both random variables 
\[ \scalprod{Y,\Phi_0} = \Normal(\scalprod{u^\dagger,\phi},\sigma^2\norm{\Phi_0}^2) 
\quad \text{and} \quad 
\scalprod{Y,\Phimap} \sim \Normal\left(\frac{\gamma}{\gamma + \sigma^2}\scalprod{u^\dagger,\phi},\sigma^2\norm{\Phimap}^2\right) \] 
have monotone likelihood ratio in $T_1(z) = \scalprod{z,\Phi_0}$ or $T_2(z) = \scalprod{z,\Phimap}$, respectively, when seeking the parameter $\scalprod{u^\dagger,\phi}$. Therefore, both tests $\Psi_0$ and $\Psi$ are uniformly most powerful level-$\alpha$ tests due to their form as indicator functions of superlevel sets of $T_{1/2}$ by Theorem 3.4.1 in \cite{LehRom:2005}. Both tests are in fact \emph{equal} since the random variables $\frac{\gamma}{\gamma + \sigma^2}\scalprod{Y,\Phi_0}$ and $\scalprod{Y,\Phimap}$ are equal in distribution, and the rejection thresholds of the two tests are chosen to achieve the same level of significance.
\end{proof}


\subsection{Interpretation as regularized test}\label{sec:reg}

In this section, we illustrate how MAP tests based upon a Gaussian prior fit into the framework of regularized hypothesis tests discussed in section 2 of \cite{KWW:2023} and how they relate to optimal regularized tests discussed in section 3 of \cite{KWW:2023}.
\rev{Let us therefore briefly recall the notation and central results from \cite{KWW:2023}.
\begin{definition}
	Under the notion \emph{regularized test}, we understand any test of the form
	\[ \Psi_{\Phi,c}(Y) := \begin{cases}
		1 & \text{if}~\scalprod{Y,\Phi} > c, \\
		0 & \text{else},
	\end{cases} \]
	with $\Phi \in \Y$ and $c \in \R$.
\end{definition}}

\rev{These tests are based upon linear estimators $\scalprod{Y,\Phi}$ for $\scalprod{\phi,\udag}_\X$.}
As we have seen in \cref{form_MAP_test}, the MAP test $\Psimap$ corresponds to the specific regularized test $\Psi_{\Phi_\text{MAP},c_\text{MAP}}$ with
\begin{align*}
	\Phi_\text{MAP} &= TC_0^\frac12 \left(C_0^\frac12 T^*T C_0^\frac12 + \sigma^2 \Id\right)^{-1} C_0^\frac12 \phi, \\
	c_\text{MAP} &:= \scalprod{m_0, T^*\Phimap - \phi}_\X.
\end{align*}

\rev{In order to be able to control the level of any given regularized test,} we make the following assumptions on $\X$, $\Y$, $\udag$, $T$, and $\phi$.
\begin{assumption}
	\label{ass:V}
	There exists a pair of Banach spaces $(\V,\V')$ such that the following holds.
	\begin{enumerate}
		\item \label{ass:CS} $\scalprod{u,v}_\X \le \norm{u}_{\V'} \norm{v}_\V$ for all $v \in \V \cap \X$ and $u \in \V' \cap \X$.
		\item \label{ass:udag_bounded} $\udag \in \V \cap \X$ with $\norm{\udag}_\V \le 1$.
		\item \label{ass:Ts_bounded} $\ran T^* \subseteq \V'$ and $T^*$: $\Y \to \V'$ is bounded.
		\item \label{ass:phi} $\phi \in \overline{\ran T^*}$, where the closure is formed in $\V'$.
	\end{enumerate}
\end{assumption}
\rev{\cref{ass:V}.\ref{ass:udag_bounded} can be interpreted as a spectral source condition if $\V$ is a subspace of smoother functions, but also allows for other situations of interest, such as if $\udag$ is a density or a bounded function. For further discussion of \cref{ass:V} we refer to section 2 of \cite{KWW:2023}.}
Note, that Theorem \ref{not_EV_level_1} does no longer apply under Assumption \ref{ass:V} since the set of admissible elements $\udag$ is now limited. 

\begin{theorem}
	\label{power_reg_level_alpha_test}
	\rev{Under \cref{ass:V} and for every $\Phi \in \Y \setminus \{0\}$,} the regularized test \rev{$\Psi_{\Phi,c(\Phi)}$} with
	\begin{equation}
		\label{eq:c_reg}
		c(\Phi) := \sigma \norm{\Phi}_\Y Q^{-1}(1 - \alpha) + \norm{T^*\Phi - \phi}_{\V'}
	\end{equation}
	has at most level $\alpha \in (0,1)$, i.e., $\sup \left\{ \P_\udag\left[\Psi_{\rev{\Phi,c(\Phi)}}(Y) = 1\right] : \udag \in \X_H \right\} \le \alpha$, and its power is given by
	\begin{equation}
		\label{eq:size_reg_test_alpha}
		\P_\udag[\Psi_{\rev{\Phi,c(\Phi)}}(Y) = 1] = Q\left(Q^{-1}(\alpha) - \frac{J_{T\udag}^\Y(\Phi)}{\sigma}\right)
	\end{equation}
	for every $\udag \in \X_K$, where
	\begin{equation}
		\label{eq:def_J}
		J_{T\udag}^\Y(\Phi) := \frac{\norm{T^*\Phi - \phi}_{\V'} - \scalprod{\Phi, T\udag}_\Y}{\norm{\Phi}_\Y}.
	\end{equation}
	\rev{Moreover, for every $u^\dagger \in \X_K$, there exists a minimizer $\Phi^\dagger$ of $J_{T\udag}^\Y$ in $\Y \setminus \{0\}$, and the test $\Psi_{\Phi^\dagger,c(\Phi^\dagger)}$ with $c$ as in \eqref{eq:c_reg} has maximal power within the class $\{\Psi_{\Phi,c(\Phi)}: \Phi \in \Y \setminus \{0\}\}$.}
\end{theorem}
\begin{proof}
	\rev{In \cite[Theorem 2.3]{KWW:2023}, it is proven that $\Psi_{\Phi,c}$ has at most level $\alpha$ for a choice of $c = c(\Phi)$ as in \eqref{eq:c_reg}. 
	Expression \eqref{eq:size_reg_test_alpha} for the size of $\Psi_{\Phi,c(\Phi)}$ is derived in equations (3.1) and (3.2) in \cite{KWW:2023}.
	The existence of an optimal regularized test $\Psi_{\Phi^\dagger,c(\Phi^\dagger)}$ is proven in \cite[Theorem 3.2]{KWW:2023}.}
\end{proof}

\begin{corollary}
	\rev{The MAP test $\Psimap$} has at most level $\alpha$ if the prior mean is chosen such that $c_\text{MAP} = c(\Phi_\text{MAP})$, i.e., such that
	\begin{equation}
		\label{cond_m0}
		\scalprod{m_0, T^*\Phimap - \phi}_\X = \sigma \norm{\Phimap}_\Y Q^{-1}(1 - \alpha) + \norm{T^*\Phimap - \phi}_{\V'}.
	\end{equation}
	With this choice of $m_0$, the power of $\Psimap$ \rev{for $\udag \in \X_K$} is, moreover, given by
	\begin{equation}
		\label{eq:size_map_test_alpha}
		\P_\udag[\Psimap(Y) = 1] = Q\left(Q^{-1}(\alpha) - \frac{J_{T\udag}^\Y(\Phimap)}{\sigma}\right).
	\end{equation}
\end{corollary}
\begin{proof}
	\rev{This follows immediately from \cref{power_reg_level_alpha_test} and \cref{size_map_test}, where} we used that $Q^{-1}(\alpha) = -Q^{-1}(1 - \alpha)$.
\end{proof}

In the following, we will always choose $m_0 \in \X$ such that \eqref{cond_m0} holds, \rev{i.e., we will consider a setting that results in a MAP test $\Psimap = \Psi_{\Phimap, c(\Phimap)}$ with frequentistic level at most $\alpha$.}
It is a natural question whether \rev{this test} can achieve \rev{the power of the optimal regularized test $\Psi_{\Phi^\dagger,c(\Phi^\dagger)}$.}
In the following, we show that MAP tests with arbitrarily close to optimal power can indeed be constructed by choosing the prior covariance accordingly. These are, however, oracle tests, i.e. their construction requires knowledge of the truth $\udag$.

\begin{theorem}
	\label{min_in_cl_ran_T}
	The minimizer $\Phi^\dagger$ of $J_{T\udag}^\Y$ satisfies $\Phi^\dagger \in \overline{\ran T}$.
\end{theorem}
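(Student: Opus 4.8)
The plan is to exploit the orthogonal decomposition $\Y=\overline{\ran T}\oplus(\ran T)^{\perp}$ together with the identity $(\ran T)^{\perp}=\ker T^{*}$. Let $P$ denote the orthogonal projection of $\Y$ onto $\overline{\ran T}$ and write $\Phi^{\dagger}=\Phi_{1}+\Phi_{2}$ with $\Phi_{1}=P\Phi^{\dagger}\in\overline{\ran T}$ and $\Phi_{2}=(\Id-P)\Phi^{\dagger}\in\ker T^{*}$. The first observation is that the numerator of $J_{T\udag}^{\Y}$ does not see the component $\Phi_{2}$: since $T^{*}\Phi_{2}=0$ we have $T^{*}\Phi^{\dagger}=T^{*}\Phi_{1}$, and since $T\udag\in\ran T\perp\ker T^{*}$ we have $\scalprod{\Phi_{2},T\udag}_{\Y}=0$, hence
\[
\norm{T^{*}\Phi^{\dagger}-\phi}_{\V'}-\scalprod{\Phi^{\dagger},T\udag}_{\Y}
=\norm{T^{*}\Phi_{1}-\phi}_{\V'}-\scalprod{\Phi_{1},T\udag}_{\Y}=:N .
\]
On the other hand, by Pythagoras $\norm{\Phi^{\dagger}}_{\Y}^{2}=\norm{\Phi_{1}}_{\Y}^{2}+\norm{\Phi_{2}}_{\Y}^{2}$, so $\norm{\Phi_{1}}_{\Y}\le\norm{\Phi^{\dagger}}_{\Y}$ with equality exactly when $\Phi_{2}=0$.

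Next I would show that the optimal value $J_{T\udag}^{\Y}(\Phi^{\dagger})$ is strictly negative. Since $\udag\in\X_{K}$ (we are studying the power, i.e.\ the situation under the alternative) and $\phi\in\overline{\ran T^{*}}$ with the closure taken in $\V'$, there is a sequence $\Phi_{n}\in\Y\setminus\{0\}$ with $\norm{T^{*}\Phi_{n}-\phi}_{\V'}\to 0$; by the compatibility estimate in \cref{ass:V} and $\norm{\udag}_{\V}\le 1$ this forces $\scalprod{\Phi_{n},T\udag}_{\Y}=\scalprod{T^{*}\Phi_{n},\udag}_{\X}\to\scalprod{\phi,\udag}_{\X}$, a limit which is \emph{positive} precisely because $\udag\in\X_{K}$. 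Hence the numerator of $J_{T\udag}^{\Y}(\Phi_{n})$ converges to $-\scalprod{\phi,\udag}_{\X}<0$, so $J_{T\udag}^{\Y}(\Phi_{n})<0$ for $n$ large and therefore $J_{T\udag}^{\Y}(\Phi^{\dagger})<0$. In particular $N<0$, which also rules out $\Phi_{1}=0$, since $\Phi_{1}=0$ would give $N=\norm{\phi}_{\V'}\ge 0$.

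It remains to exclude $\Phi_{2}\neq 0$. If $\Phi_{2}\neq 0$, then $0<\norm{\Phi_{1}}_{\Y}<\norm{\Phi^{\dagger}}_{\Y}$, and since $N<0$,
\[
J_{T\udag}^{\Y}(\Phi_{1})=\frac{N}{\norm{\Phi_{1}}_{\Y}}<\frac{N}{\norm{\Phi^{\dagger}}_{\Y}}=J_{T\udag}^{\Y}(\Phi^{\dagger}),
\]
contradicting the minimality of $\Phi^{\dagger}$. Hence $\Phi_{2}=0$ and $\Phi^{\dagger}=\Phi_{1}\in\overline{\ran T}$. The only genuinely delicate step is the middle one, establishing the strict negativity of the infimum of $J_{T\udag}^{\Y}$; this is where the hypotheses $\phi\in\overline{\ran T^{*}}$ (closure in $\V'$), $\udag\in\X_{K}$ and the Cauchy--Schwarz type bound of \cref{ass:V} really enter, but the argument there is a routine approximation. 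Everything else is a direct consequence of orthogonality and the invariance of $J_{T\udag}^{\Y}$ under positive scaling of $\Phi$.
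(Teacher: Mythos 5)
Your proof is correct and follows essentially the same route as the paper: decompose $\Phi^\dagger$ orthogonally along $\overline{\ran T}\oplus\ker T^*$, note that the numerator of $J_{T\udag}^\Y$ is unaffected by the $\ker T^*$ component while the denominator strictly decreases when that component is removed, and conclude using $J_{T\udag}^\Y(\Phi^\dagger)<0$. The only difference is that the paper cites Theorem 3.2 of \cite{KWW:2023} for the strict negativity of the optimal value, whereas you derive it directly from \cref{ass:V} via an approximation argument using $\phi\in\overline{\ran T^*}$ and $\udag\in\X_K$ (and you also explicitly rule out $\Phi_1=0$, a detail the paper leaves implicit); both are fine.
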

\begin{proof}
	First, let us decompose $\Phi^\dagger = \Phi_1^\dagger + \Phi_2^\dagger$ into $\Phi_1^\dagger \in \ker T^*$ and $\Phi_2^\dagger = (\ker T^*)^\perp = \overline{\ran T}$.
	Then, we have
	\[ \norm{T^*\Phi^\dagger - \phi}_{\V'} - \scalprod{T^*\Phi^\dagger, \udag}_\X = \norm{T^*\Phi_2^\dagger - \phi}_{\V'} - \scalprod{T^*\Phi_2^\dagger, \udag}_\X \]
	and
	\[ \norm{\Phi^\dagger}_\Y^2 = \norm{\Phi_1^\dagger}_\Y^2 + \norm{\Phi_2^\dagger}_\Y^2. \]
	Now, if there was a component $\Phi_1^\dagger \neq 0$, it would follow that
	\begin{equation*}
		J_{T\udag}^\Y(\Phi_2^\dagger) = \frac{\norm{T^*\Phi_2^\dagger - \phi}_{\V'} - \scalprod{\Phi_2^\dagger, T\udag}_\Y}{\norm{\Phi_2^\dagger}_\Y}
		< \frac{\norm{T^*\Phi^\dagger - \phi}_{\V'} - \scalprod{\Phi^\dagger, T\udag}_\Y}{\norm{\Phi^\dagger}_\Y} = J_{T\udag}^\Y(\Phi^\dagger)
	\end{equation*}
	since $J_{T\udag}^\Y(\Phi^\dagger) < 0$ by Theorem 3.2 in \cite{KWW:2023},
	which contradicts the optimality of $\Phi^\dagger$.
\end{proof}

\begin{theorem}
	\label{inf_prior_inf_reg}
	Suppose that \cref{ass:V} holds, that $T$ is a compact, linear operator with singular system $(\tau_k,e_k,f_k)_{k\in\N}$, and that for all $k \in \N$,
	\[ \scalprod{T^*\Phi^\dagger,e_k}_\X = 0 \quad \Rightarrow \quad \scalprod{\phi,e_k}_\X = 0. \]
	Then we have
	\[ \inf_{C_0 \in F} J_{T\udag}^\Y \left(\Phimap(C_0)\right) = \min_{\Phi \in \Y \setminus\{0\}} J_{T\udag}^\Y(\Phi) < 0, \]
	where $F$ denotes the set of all operators $C_0$ on $\X$ of the form
	\[ C_0x = \sum_{k\in\N} \rho_k \scalprod{x,e_k}_\X e_k + C_1 P_{\ker T} \]
	with $(\rho_k)_{k \in \N} \in \ell^1(\N)$, $\rho_k > 0$ for all $k \in \N$ and a self-adjoint, positive definite, trace class operator $C_1$ on $\ker T$.
\end{theorem}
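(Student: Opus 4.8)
The plan is to establish the two inequalities in $\inf_{C_0\in F}J_{T\udag}^\Y(\Phimap(C_0)) = \min_{\Phi\in\Y\setminus\{0\}}J_{T\udag}^\Y(\Phi)$ separately; that this minimum is strictly negative is already contained in the proof of \cref{min_in_cl_ran_T} (via Theorem~3.2 of \cite{KWW:2023}). The inequality ``$\ge$'' is immediate: every probe element $\Phimap(C_0) = \sum_{k\in\N}\frac{\tau_k\rho_k}{\tau_k^2\rho_k+\sigma^2}\scalprod{\phi,e_k}_\X f_k$ arising from a $C_0\in F$ is nonzero, since $\phi\neq0$ has some nonvanishing coefficient $\scalprod{\phi,e_k}_\X$ and all filter factors $\frac{\tau_k\rho_k}{\tau_k^2\rho_k+\sigma^2}$ are strictly positive, so $J_{T\udag}^\Y(\Phimap(C_0))\ge\min_{\Phi\neq0}J_{T\udag}^\Y(\Phi)$ for each admissible $C_0$, and we take the infimum over $C_0\in F$.

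For ``$\le$'' the idea is to approximate the optimal probe element $\Phi^\dagger$ — up to a positive scalar, exploiting that $J_{T\udag}^\Y$ is scale-invariant — by probe elements $\Phimap(C_0)$, $C_0\in F$, in $\norm{\cdot}_\Y$, and then use continuity. Since $T^*\colon\Y\to\V'$ is bounded by \cref{ass:Ts_bounded}, $J_{T\udag}^\Y$ is continuous on $\Y\setminus\{0\}$, and $\Phi^\dagger\neq0$, so it suffices to produce, for each $\epsilon>0$, some $C_0\in F$ and $\lambda>0$ with $\norm{\Phimap(C_0)-\lambda\Phi^\dagger}_\Y<\epsilon\lambda$. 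By \cref{min_in_cl_ran_T} we write $\Phi^\dagger = \sum_{k\in\N}\beta_k f_k$ with $\beta_k = \tau_k^{-1}\scalprod{T^*\Phi^\dagger,e_k}_\X$; in particular $\scalprod{\phi,e_k}_\X=0$ forces $\beta_k=0$ by the hypothesis. The construction is a truncation: fix a large $N$, put $\rho_k:=2^{-k}$ for $k>N$, so that the tail $\sum_{k>N}\beta_k^2$ is small (as $(\beta_k)\in\ell^2$) and the corresponding coefficients of $\Phimap(C_0)$ are negligible; for $k\le N$ choose $\rho_k\in(0,\infty)$ so that the $k$-th coefficient $\frac{\tau_k\rho_k}{\tau_k^2\rho_k+\sigma^2}\scalprod{\phi,e_k}_\X$ equals $\lambda\beta_k$. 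When $\scalprod{\phi,e_k}_\X=0$ the hypothesis gives $\beta_k=0$, so both sides vanish for any $\rho_k$; otherwise the matching equation has a unique positive solution $\rho_k$ provided $\lambda\beta_k/\scalprod{\phi,e_k}_\X\in(0,\tau_k^{-1})$, which for the finitely many relevant $k\le N$ can be arranged by taking $\lambda$ small, once we know the sign is right. The sequence $(\rho_k)$ is then a finite string of positive numbers followed by $(2^{-k})_{k>N}$, hence lies in $\ell^1(\N)$, so $C_0\in F$ for any admissible $C_1$; combining both ranges of indices gives $\norm{\Phimap(C_0)-\lambda\Phi^\dagger}_\Y<\epsilon\lambda$ for $N$ large, and the claim follows by continuity.

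The step I expect to be the main obstacle is the proviso ``once we know the sign is right'': the attainable filter factors exhaust only the open interval $(0,\tau_k^{-1})$, so matching $\lambda\beta_k$ requires $\scalprod{T^*\Phi^\dagger,e_k}_\X$ and $\scalprod{\phi,e_k}_\X$ to have the same sign (smallness of $\lambda$ then takes care of the magnitude). Proving this coordinatewise sign- and support-compatibility of the minimizer $\Phi^\dagger$ with $\phi$ is the heart of the matter, and is where the hypothesis $\scalprod{T^*\Phi^\dagger,e_k}_\X=0\Rightarrow\scalprod{\phi,e_k}_\X=0$, the geometry of $(\V,\V')$ from \cref{ass:V}, and the first-order optimality of $\Phi^\dagger$ (recall $J_{T\udag}^\Y(\Phi^\dagger)<0$) must be used in full. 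The remaining ingredients — continuity of $J_{T\udag}^\Y$, the $\ell^1$ bookkeeping for $C_0$, and the $\ell^2$ tail estimate — are routine.
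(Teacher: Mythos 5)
Your overall architecture (``$\ge$'' trivial, ``$\le$'' by approximating the optimal probe element by some $\Phimap(C_0)$ in $\Y$ and invoking continuity of $J_{T\udag}^\Y$ on $\Y\setminus\{0\}$) matches the paper's, but the device you use to make the approximation work rests on a false premise. The functional
\[ J_{T\udag}^\Y(\Phi) = \frac{\norm{T^*\Phi - \phi}_{\V'} - \scalprod{\Phi, T\udag}_\Y}{\norm{\Phi}_\Y} \]
is \emph{not} scale-invariant: the term $\norm{T^*\Phi-\phi}_{\V'}$ is affine rather than positively homogeneous in $\Phi$, so $J_{T\udag}^\Y(\lambda\Phi^\dagger)\neq J_{T\udag}^\Y(\Phi^\dagger)$ for $\lambda\neq 1$; in fact $J_{T\udag}^\Y(\lambda\Phi^\dagger)\to+\infty$ as $\lambda\to 0^+$, since the numerator tends to $\norm{\phi}_{\V'}>0$ while the denominator vanishes. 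You introduce the small factor $\lambda$ precisely to force the target filter ratios $\lambda\beta_k/\scalprod{\phi,e_k}_\X$ into the attainable range $(0,\tau_k^{-1})$, but shrinking $\lambda$ destroys exactly the quantity you are trying to approximate: your construction yields $J_{T\udag}^\Y(\Phimap(C_0))\approx J_{T\udag}^\Y(\lambda\Phi^\dagger)$, which for small $\lambda$ lies far above $\min J_{T\udag}^\Y$. On top of this you explicitly leave open the sign compatibility of $\scalprod{T^*\Phi^\dagger,e_k}_\X$ with $\scalprod{\phi,e_k}_\X$, which you correctly identify as essential to your matching step. So the ``$\le$'' direction is not established.

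The paper's proof avoids the rescaling altogether: it matches $\Phi^\dagger$ itself (not a multiple) coordinatewise on the first $n$ modes by solving $\scalprod{\Phi^\dagger,f_k}_\Y=\frac{\tau_k}{\tau_k^2+\sigma^2\rho_k^{-1}}\scalprod{\phi,e_k}_\X$ for $\rho_k$, namely $\rho_k^{-1}=\tau_k^2\scalprod{\phi-T^*\Phi^\dagger,e_k}_\X/(\sigma^2\scalprod{T^*\Phi^\dagger,e_k}_\X)$, sets $\rho_k=\sigma^2 2^{-k}$ on the tail (and handles the degenerate coordinates separately), and lets $n\to\infty$ so that $\Phimap(C_{0,n})\to\Phi^\dagger$ in $\Y$; continuity of $J_{T\udag}^\Y$ then gives the claim. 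If you want to repair your argument, drop $\lambda$ and solve for $\rho_k$ exactly as above; the positivity of the resulting $\rho_k$ --- equivalently, that $\scalprod{T^*\Phi^\dagger,e_k}_\X$ and $\scalprod{\phi-T^*\Phi^\dagger,e_k}_\X$ share a sign --- is then the one remaining point to extract from the optimality of $\Phi^\dagger$, a point which, to be fair, the paper itself passes over silently; the rest of the argument (truncation, $\ell^1$ bookkeeping, $\ell^2$ tail, continuity) goes through as you describe.
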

\begin{proof}
For $C_0 \in F$, we have
\[ \Phimap(C_0) = \sum_{k\in\N} \frac{\tau_k}{\tau_k^2 + \sigma^2\rho_k^{-1}} \scalprod{\phi,e_k}_\X f_k \]
by \eqref{eq:spectral_Phimap}.
By \cref{min_in_cl_ran_T}, $\Phi^\dagger \in \overline{\ran T} = \overline{\spn \{f_k: k \in \N\}}$. Therefore, $\Phimap = \Phi^\dagger$ if and only if for all $k \in \N$,
\begin{equation}
	\label{eq:cond_C0}
	\scalprod{\Phi^\dagger,f_k}_\Y = \frac{\tau_k}{\tau_k^2 + \sigma^2\rho_k^{-1}} \scalprod{\phi,e_k}_\X.
\end{equation}
This is trivially satisfied for $k \in \N$ with $\tau_k\scalprod{\Phi^\dagger,f_k} = \scalprod{T^*\Phi^\dagger,e_k} = 0$ by assumption, and equivalent to
\begin{equation}
	\label{eq:opt_rho_k}
	\rho_k^{-1} = \frac{\tau_k^2\scalprod{\phi - T^*\Phi^\dagger, e_k}_\X}{\sigma^2\scalprod{T^*\Phi^\dagger, e_k}_\X}
\end{equation}
for $k \in \N$ with $\scalprod{T^*\Phi^\dagger,e_k} \neq 0$.
We choose the sequence of operators
\[ C_{0,n} := \sum_{k\in\N} \rho_{k,n} \scalprod{\cdot,e_k}_\X e_k + C_1 P_{\ker T} \] 
on $\X$ with
\[ \rho_{k,n} := \begin{cases}
	\frac{\sigma^2 \scalprod{T^*\Phi^\dagger, e_k}}{\tau_k^2\scalprod{\phi - T^*\Phi^\dagger, e_k}} & \text{if}~k \le n~\text{and}~\scalprod{\phi - T^*\Phi^\dagger, e_k} \neq 0~\text{and}~\scalprod{T^*\Phi^\dagger,e_k} \neq 0, \\
	\sigma^2 n \tau_k^{-2} & \text{if}~k \le n~\text{and}~\scalprod{\phi - T^*\Phi^\dagger, e_k} = 0, \\
	\sigma^2 2^{-k} & \text{if}~k > n~\text{or}~\scalprod{T^*\Phi^\dagger,e_k} = 0,
\end{cases} \]
and an arbitrary self-adjoint, positive definite, trace class operator $C_1$ on $\ker T$.
This way, $C_{0,n}$ is positive definite and trace class for all $n \in \N$.
For $k \le n$ with $\scalprod{\phi - T^*\Phi^\dagger, e_k} \neq 0$ and $\scalprod{T^*\Phi^\dagger,e_k} \neq 0$, we have 
\[ \scalprod{\Phimap(C_{0,n}),f_k}_\Y = \scalprod{\Phi^\dagger,f_k}_\Y. \]
For $k \in \N$ with $\scalprod{T^*\Phi^\dagger,e_k} = 0$ we have
\[ \scalprod{\Phimap(C_{0,n}),f_k}_\Y = 0 = \scalprod{\Phi^\dagger,f_k}_\Y \]
by assumption.
For $k \le n$ with $\scalprod{\phi - T^*\Phi^\dagger, e_k} = 0$, we moreover have
\begin{align*} 
	\scalprod{\Phimap(C_{0,n}), f_k}_\Y &= \frac{\tau^k}{\tau_k^2 + \sigma^2\rho_k^{-1}} \left(\scalprod{\phi - T^*\Phi^\dagger, e_k}_\X + \scalprod{T^*\Phi^\dagger,e_k}_\X\right) \\
	&= \frac{1}{1 + \frac{1}{n}} \scalprod{\Phi^\dagger, f_k}_\Y
	= \frac{n}{n+1} \scalprod{\Phi^\dagger, f_k}_\Y.
\end{align*}
Now, it follows that
\begin{align*}
	&\bignorm{\Phimap(C_{0,n}) - \Phi^\dagger}_\Y^2 
	= \sum_{k=1}^\infty \scalprod{\Phimap(C_{0,n}) - \Phi^\dagger, f_k}_\Y^2 \\
	&\quad \le \sum_{k=1}^n \left(1 - \frac{n}{n+1}\right)^2 \scalprod{\Phi^\dagger, f_k}_\Y^2 + \sum_{k=n+1}^\infty \scalprod{\Phimap(C_{0,n}) - \Phi^\dagger, f_k}_\Y^2 \\
	&\quad = \frac{1}{(n+1)^2} \sum_{k=1}^n \scalprod{\Phi^\dagger, f_k}_\Y^2 + \sum_{k=n+1}^\infty \scalprod{\Phimap(C_{0,0}) - \Phi^\dagger, f_k}_\Y^2 \to 0
\end{align*}
as $n \to \infty$, since $\Phimap(C_{0,0}) - \Phi^\dagger \in \Y$.
By \cref{ass:V}.\ref{ass:Ts_bounded}, $J_{T\udag}^\Y$ is continuous on $\Y \setminus \{0\}$, so that
\[ J_{T\udag}^\Y \left(\Phimap(C_{0,n})\right) \to J_{T\udag}^\Y (\Phi^\dagger) = \min_{\Phi \in \Y} J_{T\udag}^\Y(\Phi) \]
as $n \to \infty$. This proves the proposition.
\end{proof}

\begin{remark}
	In general, the infimum can not be attained. If any component of $\phi$ satisfies 
	\[ \scalprod{\phi - T^*\Phi^\dagger, e_k}_\X = 0, \]
	then \eqref{eq:cond_C0} cannot hold because $\tau_k > 0$ for all $k \in \N$, and hence $\Phi^\dagger = \Phimap$ cannot hold either. Likewise, if the series
	\[ \sum_{k=1}^\infty \frac{\scalprod{T^*\Phi^\dagger, e_k}_\Y}{\tau_k^2\scalprod{\phi - T^*\Phi^\dagger, e_k}_\X} \]
	does not converge, then the optimal $C_0 \in F$ cannot be trace class because $(\rho_k)_{k\in\N}$, chosen according to \eqref{eq:opt_rho_k}, is not an $\ell^1$-sequence.
\end{remark}
\rev{In conjunction with equation \eqref{eq:size_map_test_alpha}, \cref{inf_prior_inf_reg} shows that MAP tests based upon a Gaussian prior distribution are, in principle, able to achieve the same power as the optimal regularized test.}


\section{Performance of MAP test under spectral source condition}
\label{sec:map_ssc}

Now, we study the performance of the MAP test $\Psimap$ in a more specific setting. Namely, under a spectral source condition and for a specific choice of the prior covariance $C_0$. We establish lower bounds on the power of the MAP test for an a priori choice of the prior covariance and we study its performance numerically for an a posteriori choice of the prior covariance.
We make the following assumptions.
\begin{assumption}
	\label{ass:spectral}
	\begin{enumerate}
		\item \label{ass:THS} $T$ is an injective Hilbert--Schmidt operator.
		\item \label{ass:source_cond} The truth $\udag$ satisfies a spectral source condition
		\[ \udag = \left(T^*T\right)^\frac{\nu}{2}w = \abs{T}^\nu w \]
		for some $\nu > 0$ and $w \in \X$ with $\norm{w}_\X \le \rho$.
		\item \label{ass:prior_cov} The prior covariance is given by
		\[ C_0 := \gamma^2(T^*T)^\mu \]
		for some $\gamma > 0$ and $\mu \ge 1$.
		\item The prior mean satisfies \eqref{cond_m0} for some $\alpha \in (0,1)$.
	\end{enumerate}
\end{assumption}
Under these assumptions, \cref{ass:V} is satisfied for $\V = \ran (T^*T)^{\nu/2}$, $\V' = \X$,
\[ \norm{v}_\V = \frac{1}{\rho}\bignorm{\left(T^*T\right)^{-\frac{\nu}{2}}v}_\X, \quad
	\norm{u}_{\V'} = \rho\bignorm{\left(T^*T\right)^{\frac{\nu}{2}}u}_\X. \]
As discussed in Section \ref{sec:reg}, this ensures that the MAP test is indeed a regularized test, i.e. $\Psimap = \Psi_{\Phi_\text{MAP},c_\text{MAP}}$, and that it has at most level $\alpha$.
	
Moreover, note that $C_0$ commutes with $T^*T$ under \Cref{ass:spectral}. 
\Cref{ass:spectral}.\ref{ass:THS} together with the condition $\mu \ge 1$ ensure that $(T^*T)^\mu$ is positive definite and has finite trace, which in turn ensures that the prior distribution $\Pi = \Normal(m_0,C_0)$ is concentrated on $\X$.
In order for $\V$ to have positive probability under the prior distribution, $\mu$ needs to be chosen large enough.

\begin{theorem}
	\label{trace_condition}
	If $(T^*T)^{\mu - \nu}$ is trace class and $m_0 \in \ran(T^*T)^{\nu/2}$, then $\Pi(\ran(T^*T)^{\nu/2}) = 1$. Otherwise, $\Pi(\ran(T^*T)^{\nu/2}) = 0$. 
\end{theorem}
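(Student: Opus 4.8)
The plan is to diagonalize everything against the eigenbasis of $T^*T$ and reduce the claim to a convergence question for a series of independent random variables. Since $T$ is Hilbert--Schmidt it is compact, so $T^*T$ is a positive, injective, compact operator; write its eigensystem as $(\tau_k^2,e_k)_{k\in\N}$ with $\tau_k>0$ and $\tau_k\to 0$, so that $(e_k)_{k\in\N}$ is an orthonormal basis of $\X$. Then $C_0=\gamma^2(T^*T)^\mu$ has eigenvalues $\gamma^2\tau_k^{2\mu}$, which are summable because $\mu\ge 1$ and $T$ is Hilbert--Schmidt, and the Karhunen--Loève expansion shows that $U\sim\Pi$ admits the representation $U = m_0 + \gamma\sum_{k\in\N}\tau_k^\mu\xi_k e_k$ with i.i.d.\ standard normal $\xi_k$, the series converging in $\X$ almost surely. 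Since $\ran(T^*T)^{\nu/2}=\{x\in\X:\sum_k\tau_k^{-2\nu}\scalprod{x,e_k}_\X^2<\infty\}$ is a Borel linear subspace, setting $b_k:=\tau_k^{-\nu}\scalprod{m_0,e_k}_\X$ and $\sigma_k:=\gamma\tau_k^{\mu-\nu}$ we find that $\{U\in\ran(T^*T)^{\nu/2}\}$ coincides with the event $\{\sum_k(b_k+\sigma_k\xi_k)^2<\infty\}$. Altering finitely many $\xi_k$ changes only finitely many finite terms, so this is a tail event for $(\xi_k)$; by Kolmogorov's zero--one law $\Pi(\ran(T^*T)^{\nu/2})\in\{0,1\}$, and it remains to decide which value occurs.

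For the easy direction, suppose $(T^*T)^{\mu-\nu}$ is trace class, i.e.\ $\sum_k\tau_k^{2(\mu-\nu)}<\infty$, equivalently $\sum_k\sigma_k^2<\infty$. Then $\E[\sum_k\sigma_k^2\xi_k^2]=\sum_k\sigma_k^2<\infty$, so $\sum_k\sigma_k^2\xi_k^2<\infty$ almost surely; if in addition $m_0\in\ran(T^*T)^{\nu/2}$, i.e.\ $\sum_k b_k^2<\infty$, then Minkowski's inequality in $\ell^2$ gives $\sum_k(b_k+\sigma_k\xi_k)^2<\infty$ almost surely, so $\Pi(\ran(T^*T)^{\nu/2})=1$.

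For the converse I would argue by contraposition: assuming $\Pi(\ran(T^*T)^{\nu/2})=1$, i.e.\ $\sum_k(b_k+\sigma_k\xi_k)^2<\infty$ almost surely, I aim to show $\sum_k\sigma_k^2<\infty$ and $\sum_k b_k^2<\infty$. Write $\eta_k:=b_k+\sigma_k\xi_k\sim\Normal(b_k,\sigma_k^2)$, independent. Almost sure convergence of the series $\sum_k\eta_k^2$ of independent nonnegative terms forces $\sum_k\E[\min(\eta_k^2,1)]<\infty$ by the Kolmogorov three-series theorem. The crux is a lower bound $\E[\min(\eta_k^2,1)]\gtrsim\min(\sigma_k^2,1)$ with a universal constant: for indices with $\sigma_k>1$ the density of $\eta_k$ is bounded by $(2\pi)^{-1/2}$, whence $\P(\abs{\eta_k}\ge\frac12)\ge\frac12$ and $\E[\min(\eta_k^2,1)]\ge\frac18$; for indices with $\sigma_k\le1$ one exploits that the sign and the absolute value of $\xi_k$ are independent, so on the event of probability $\ge\frac12$ where $\sigma_k\xi_k$ has the same sign as $b_k$ one has $\abs{\eta_k}\ge\sigma_k\abs{\xi_k}$, giving $\E[\min(\eta_k^2,1)]\ge\frac12\E[\min(\sigma_k^2\xi_k^2,1)]\ge\frac{1}{2}\,\E[\xi^2\mathbf{1}_{\abs{\xi}\le1}]\,\sigma_k^2$. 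Hence $\sum_k\E[\min(\eta_k^2,1)]<\infty$ forces $\sum_k\sigma_k^2<\infty$, i.e.\ $(T^*T)^{\mu-\nu}$ trace class; as in the easy direction $\sum_k\sigma_k^2\xi_k^2<\infty$ almost surely, so the triangle inequality in $\ell^2$ yields $\norm{(b_k)_k}_{\ell^2}\le\norm{(b_k+\sigma_k\xi_k)_k}_{\ell^2}+\norm{(\sigma_k\xi_k)_k}_{\ell^2}<\infty$ almost surely, and since the left-hand side is deterministic, $\sum_k b_k^2<\infty$, i.e.\ $m_0\in\ran(T^*T)^{\nu/2}$. Together with the zero--one law this shows $\Pi(\ran(T^*T)^{\nu/2})=0$ whenever $(T^*T)^{\mu-\nu}$ is not trace class or $m_0\notin\ran(T^*T)^{\nu/2}$, completing the dichotomy. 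I expect the only genuinely delicate point to be the lower bound on $\E[\min(\eta_k^2,1)]$ — in particular absorbing the bias $b_k$ and treating the two regimes of $\sigma_k$ — while all remaining steps are bookkeeping with standard Gaussian series facts.
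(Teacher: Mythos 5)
Your proof is correct, and for the \qm{otherwise} direction it takes a genuinely different route from the paper. After the common reduction to the series $\sum_k(b_k+\sigma_k\xi_k)^2$ with $b_k=\tau_k^{-\nu}\scalprod{m_0,e_k}_\X$ and $\sigma_k=\gamma\tau_k^{\mu-\nu}$, the paper proves the forward implication by a full application of Kolmogorov's three-series theorem (including a Mills'-ratio tail estimate and a fourth-moment computation for the truncated variance), whereas you simply observe that the nonnegative series has finite expectation $\sum_k\sigma_k^2$ and then absorb the mean by Minkowski --- strictly simpler, and it buys the same conclusion. For the converse the paper argues structurally: assuming $\Pi(\V)=1$ it computes the covariance of $\Pi$ with respect to the $\V$-inner product, identifies it as $\gamma^2\rho^{-2}(T^*T)^{\mu-\nu}$, and invokes the fact that a Gaussian measure on a Hilbert space must have trace-class covariance (with the case $m_0\notin\V$, $(T^*T)^{\mu-\nu}$ trace class handled separately by subtracting the centred part). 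You instead use the necessity half of the three-series theorem for the nonnegative independent terms $\eta_k^2$ together with the elementary lower bound $\E[\min(\eta_k^2,1)]\gtrsim\min(\sigma_k^2,1)$, splitting on $\sigma_k\lessgtr 1$ and exploiting the independence of sign and modulus of $\xi_k$ to absorb the bias $b_k$; this handles both failure modes in one sweep and avoids any appeal to the theory of Gaussian measures on Hilbert spaces (your use of the tail-event form of Kolmogorov's zero--one law, rather than the Gaussian zero--one law for measurable linear subspaces, is likewise self-contained). All the estimates you flag as delicate check out: for $\sigma_k>1$ the density bound gives $\P(\abs{\eta_k}\ge\tfrac12)\ge\tfrac12$ and hence $\E[\min(\eta_k^2,1)]\ge\tfrac18$, and for $\sigma_k\le1$ the sign-conditioning argument gives $\E[\min(\eta_k^2,1)]\ge\tfrac12\E[\xi^2\mathbf{1}_{\abs{\xi}\le1}]\,\sigma_k^2$, which together force $\sum_k\sigma_k^2<\infty$; the deterministic limit argument then recovers $\sum_kb_k^2<\infty$. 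The trade-off is that the paper's covariance argument generalizes immediately to non-diagonal situations, while yours is more elementary and quantitative.
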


The proof of this theorem can be found in \cref{sec:proofs_map_ssc}.
If $\mu \le \nu$, we have 
\[ \Pi(\ran (T^*T)^{\nu/2}) \le \Pi(\ran (T^*T)^{\mu/2}) = \Pi(\ran C_0^{1/2}) = 0 \]
and the prior satisfies the source condition with probability $0$ since the Cameron--Martin space of a Gaussian measure has probability $0$ by Proposition 1.27 in \cite{DaPrato:2006}.
On the other hand, we can guarantee that $\Pi(\ran (T^*T)^{\nu/2}) = 1$ by choosing $\mu$ large enough such that $(T^*T)^{\mu - \nu}$ is trace class and $m_0 \in \V$. Since $T$ is a Hilbert--Schmidt operator, this is the case if $\mu \ge \nu + 1$. Then, $\Pi$ can be interpreted as a Gaussian measure $\Normal(m_0,\gamma^2(T^*T)^{\mu - \nu})$ on $\V$. Note that $m_0$ can always be chosen in $\V$ as such that \eqref{cond_m0} is satisfied, e.g., as $m_0 = w(T^*T)^{\nu/2}\phi$ with $w \in \R$.
We will study both the compatible case $\mu \ge \nu + 1$ and the incompatible case $\mu \le \nu$.

\subsection{A priori choice of prior covariance}

\rev{By equation \cref{eq:size_map_test_alpha}, the power of the MAP test $\Psimap$ is given by
\begin{equation*}
	\P_\udag\left[\Psimap(Y) = 1\right] = Q\left(Q^{-1}(\alpha) - \frac{J_{T\udag}^\Y(\Phimap)}{\sigma}\right)
\end{equation*}
with $J_{T\udag}^Y$ as in \eqref{eq:def_J}.
For a prior covariance of the form $C_0 = \gamma^2(T^*T)^\mu$, the probe element $\Phimap$ is given by
\begin{equation*}
	\Phimap = \Phimap(\gamma) = T(T^*T)^\mu \left((T^*T)^{\mu + 1} + \frac{\sigma^2}{\gamma^2}\Id\right)^{-1}\phi.
\end{equation*}
For a given $\mu \ge 1$, one can now maximize the power of $\Psimap$ by minimizing 
\[ \gamma \mapsto J_{T\udag}^\Y(\Phimap(\gamma)). \]
However, this requires knowledge of $\udag$, so it results in an \emph{oracle test}.

Note that with the ansatz $\gamma = \gamma_0\sigma$, the probe element
\[ \Phimap(\gamma_0\sigma) = T(T^*T)^\mu \left((T^*T)^{\mu + 1} + \frac{1}{\gamma_0^{2}}\Id\right)^{-1}\phi \]
does no longer depend on the noise level. As a consequence, a probe element $\Phimap(\gamma_0\sigma)$ which is optimal for all noise levels can be found by minimizing
$\gamma_0 \mapsto J_{T\udag}^\Y(\Phimap(\gamma_0\sigma))$.
Thus, maximizing the power leads to a choice of $\gamma = \gamma_0\sigma$ in the order of the noise level $\sigma$.}

The power of $\Psimap$ can be expressed more explicitly as
\begin{align*}
	\P_\udag\left[\Psimap(Y) = 1\right] &= Q\left(Q^{-1}(\alpha) + \frac{\scalprod{\udag,T^*\Phimap}_\X - \norm{T^*\Phimap - \phi}_{\V'}}{\sigma\norm{\Phimap}_\Y}\right) \\
	&= Q\left(Q^{-1}(\alpha) + \frac{\scalprod{\udag,T^*\Phimap}_\X - \rho\bignorm{(T^*T)^\frac{\nu}{2}\left(T^*\Phimap - \phi\right)}_{\X}}{\sigma\norm{\Phimap}_\Y}\right).
\end{align*}
We can estimate the quantities in this expression in terms of the prior covariance as follows.

\begin{theorem}
	\label{residual_Phi_estimate}
	Suppose that \cref{ass:spectral} holds.
	If $\mu > \frac{\nu}{2} - 1$, then
	\begin{align*}
		\bignorm{\left(T^*T\right)^\frac{\nu}{2} \left(T^*\Phimap - \phi\right)}_\X &\le \left(\frac{\sigma}{\gamma}\right)^{\frac{\nu}{\mu + 1}} \norm{\phi}_\X, \\
		\bignorm{\Phimap}_\Y &\le \left(\frac{\gamma}{\sigma}\right)^{\frac{1}{\mu + 1}} \norm{\phi}_\X.
	\end{align*}
\end{theorem}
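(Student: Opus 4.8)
The plan is to diagonalize both quantities in the joint eigensystem of $T^*T$ and $C_0$ and then reduce the two estimates to a single elementary scalar bound. Under \cref{ass:spectral}, $T$ is injective, $C_0 = \gamma^2(T^*T)^\mu$ commutes with $T^*T$, and the two operators share a singular system $(\tau_k, e_k, f_k)_{k\in\N}$ with $\tau_k > 0$, where $(e_k)_{k\in\N}$ is an orthonormal basis of $\X$ and $(f_k)_{k\in\N}$ an orthonormal system in $\Y$. Writing $\Phimap$ from \eqref{eq:def_Phimap} in this basis, using $C_0^{1/2} = \gamma(T^*T)^{\mu/2}$ and $C_0^{1/2}T^*TC_0^{1/2} = \gamma^2(T^*T)^{\mu+1}$, and then applying $T^*$ (which sends $f_k \mapsto \tau_k e_k$) to the result, one obtains
\[ \Phimap = \sum_{k\in\N} \frac{\gamma^2\tau_k^{2\mu+1}}{\gamma^2\tau_k^{2\mu+2}+\sigma^2}\scalprod{\phi,e_k}_\X f_k, \qquad T^*\Phimap - \phi = -\sum_{k\in\N}\frac{\sigma^2}{\gamma^2\tau_k^{2\mu+2}+\sigma^2}\scalprod{\phi,e_k}_\X e_k. \]
Applying $(T^*T)^{\nu/2}$ to the second identity and exploiting orthonormality, the squared norms to be estimated become
\[ \bignorm{(T^*T)^{\frac{\nu}{2}}(T^*\Phimap-\phi)}_\X^2 = \sum_{k\in\N}\left(\frac{\sigma^2\tau_k^\nu}{\gamma^2\tau_k^{2\mu+2}+\sigma^2}\right)^2\scalprod{\phi,e_k}_\X^2, \qquad \norm{\Phimap}_\Y^2 = \sum_{k\in\N}\left(\frac{\gamma^2\tau_k^{2\mu+1}}{\gamma^2\tau_k^{2\mu+2}+\sigma^2}\right)^2\scalprod{\phi,e_k}_\X^2. \]

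The next step is to bound the two spectral multipliers uniformly over $k$. Inserting $\gamma = \gamma_0\sigma$ and substituting $t = \gamma_0^2\tau_k^{2(\mu+1)} > 0$ gives $\gamma^2\tau_k^{2\mu+2}+\sigma^2 = \sigma^2(1+t)$, and a short bookkeeping of exponents yields
\[ \frac{\sigma^2\tau_k^\nu}{\gamma^2\tau_k^{2\mu+2}+\sigma^2} = \gamma_0^{-\frac{\nu}{\mu+1}}\,\frac{t^{\nu/(2(\mu+1))}}{1+t}, \qquad \frac{\gamma^2\tau_k^{2\mu+1}}{\gamma^2\tau_k^{2\mu+2}+\sigma^2} = \gamma_0^{\frac{1}{\mu+1}}\,\frac{t^{(2\mu+1)/(2(\mu+1))}}{1+t}. \]
Squaring, both expressions are of the form $\gamma_0^{p}\,t^{\beta}/(1+t)^2$, and the elementary inequality $t^\beta/(1+t)^2 = (t/(1+t))^\beta (1+t)^{\beta-2} \le 1$, valid whenever $0 \le \beta \le 2$, bounds each squared multiplier by the corresponding power of $\gamma_0$, namely by $\gamma_0^{-2\nu/(\mu+1)}$ and $\gamma_0^{2/(\mu+1)}$ respectively, for every $k\in\N$. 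For the residual multiplier $\beta = \nu/(\mu+1)$, and the requirement $\beta \le 2$ is precisely $\mu \ge \frac{\nu}{2}-1$, which is implied by the hypothesis $\mu > \frac{\nu}{2}-1$; for $\norm{\Phimap}_\Y$ one has $\beta = (2\mu+1)/(\mu+1) = 2 - (\mu+1)^{-1} \in [0,2)$ automatically, using $\mu \ge 1$.

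Finally, pulling these uniform bounds out of the sums, using $\sum_{k\in\N}\scalprod{\phi,e_k}_\X^2 = \norm{\phi}_\X^2$ (again because $(e_k)_{k\in\N}$ is an orthonormal basis of $\X$), and taking square roots, yields the two asserted estimates. The argument is otherwise a routine calculation; the only point that genuinely needs attention is the exponent bookkeeping in the substitution $t = \gamma_0^2\tau_k^{2(\mu+1)}$ and the observation that the hypothesis $\mu > \frac{\nu}{2}-1$ is exactly what keeps the relevant power $\beta = \nu/(\mu+1)$ at most $2$, so that the residual multiplier admits the clean bound $\gamma_0^{-\nu/(\mu+1)}$ with no dependence on $T$.
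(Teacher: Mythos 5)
Your proof is correct and follows essentially the same route as the paper's: both reduce the two estimates to uniform bounds on the spectral multipliers $\lambda^{\nu/2}\bigl(1+\tfrac{\gamma^2}{\sigma^2}\lambda^{\mu+1}\bigr)^{-1}$ and $\tfrac{\gamma^2}{\sigma^2}\lambda^{\mu+\frac12}\bigl(1+\tfrac{\gamma^2}{\sigma^2}\lambda^{\mu+1}\bigr)^{-1}$ of $T^*T$, written either via functional calculus or, as you do, coordinatewise in the joint eigenbasis. The only difference is in how the scalar supremum is bounded: the paper locates the maximizer by differentiation and then uses $t^t(1-t)^{1-t}\le 1$, whereas you get the same constants directly from the elementary inequality $t^\beta/(1+t)^2\le\left(\tfrac{t}{1+t}\right)^\beta(1+t)^{\beta-2}\le 1$ for $0\le\beta\le 2$, which is a slightly cleaner way to see that $\mu>\tfrac{\nu}{2}-1$ is exactly the needed condition.
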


This theorem is proven in \cref{sec:proofs_map_ssc}.
It yields a lower bound on the power of $\Psimap$.
\begin{theorem}
	\label{power_a_priori}
	Suppose that \cref{ass:spectral} holds.
	If $\mu > \frac{\nu}{2} - 1$, then the power of $\Psimap$ for $\udag \in \X_K$ is bounded by
	\begin{equation}
		\label{eq:power_a_priori}
		\P_\udag\left[\Psimap(Y) = 1\right] \ge Q\left(Q^{-1}(\alpha) + \frac{\frac{\dualpair{\udag,\phi}}{\norm{\phi}} - 2\rho\gamma^{-\frac{\nu}{\mu + 1}}\sigma^{\frac{\nu}{\mu + 1}}}{\gamma^{\frac{1}{\mu + 1}}\sigma^{\frac{\mu}{\mu + 1}}}\right).
	\end{equation}
\end{theorem}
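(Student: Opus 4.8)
The plan is to read off the power of $\Psimap$ from \cref{size_map_test} together with the choice \eqref{cond_m0} of $m_0$ — which is exactly the two‑line formula recorded just before \cref{residual_Phi_estimate},
\[
\P_\udag\left[\Psimap(Y) = 1\right] = Q\!\left(Q^{-1}(\alpha) + \frac{\scalprod{\udag,T^*\Phimap}_\X - \rho\bignorm{(T^*T)^{\nu/2}\left(T^*\Phimap - \phi\right)}_\X}{\sigma\norm{\Phimap}_\Y}\right)
\]
— and then to bound the argument of $Q$ from below, using monotonicity of $Q$. Two of the three ingredients are handed to us by \cref{residual_Phi_estimate}: the residual $\norm{(T^*T)^{\nu/2}(T^*\Phimap - \phi)}_\X$ appearing in the numerator, and the norm $\norm{\Phimap}_\Y$ in the denominator. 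The only quantity still needing work is a lower bound for the signal term $\scalprod{\udag,T^*\Phimap}_\X$ expressed through $\scalprod{\udag,\phi}_\X$.

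For that term I would write $\scalprod{\udag,T^*\Phimap}_\X = \scalprod{\udag,\phi}_\X - \scalprod{\udag,\phi - T^*\Phimap}_\X$, substitute the spectral source condition $\udag = (T^*T)^{\nu/2}w$ with $\norm{w}_\X \le \rho$, move the self‑adjoint factor $(T^*T)^{\nu/2}$ onto the residual, and then apply Cauchy--Schwarz together with the first estimate of \cref{residual_Phi_estimate}:
\[
\bigabs{\scalprod{\udag,\phi - T^*\Phimap}_\X} = \bigabs{\scalprod{w,(T^*T)^{\nu/2}(T^*\Phimap - \phi)}_\X} \le \rho\bignorm{(T^*T)^{\nu/2}(T^*\Phimap - \phi)}_\X \le \rho\gamma_0^{-\frac{\nu}{\mu+1}}\norm{\phi}_\X .
\]
Combining this with the same residual bound applied to the second summand of the numerator yields
\[
\scalprod{\udag,T^*\Phimap}_\X - \rho\bignorm{(T^*T)^{\nu/2}(T^*\Phimap - \phi)}_\X \ge \scalprod{\udag,\phi}_\X - 2\rho\gamma_0^{-\frac{\nu}{\mu+1}}\norm{\phi}_\X = \norm{\phi}_\X\!\left(\frac{\scalprod{\udag,\phi}_\X}{\norm{\phi}_\X} - 2\rho\gamma_0^{-\frac{\nu}{\mu+1}}\right).
\]

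Finally I would divide by $\sigma\norm{\Phimap}_\Y$, insert the bound $\norm{\Phimap}_\Y \le \gamma_0^{1/(\mu+1)}\norm{\phi}_\X$ from \cref{residual_Phi_estimate} into the denominator, cancel the common factor $\norm{\phi}_\X$, and feed the resulting lower bound back into the increasing function $Q$; this produces exactly \eqref{eq:power_a_priori}. The main bookkeeping point, and where the argument stops being purely mechanical, is this last division: only an \emph{upper} bound for $\norm{\Phimap}_\Y$ is available, so replacing $\norm{\Phimap}_\Y$ by $\gamma_0^{1/(\mu+1)}\norm{\phi}_\X$ in the denominator decreases the fraction only when the numerator bound $\scalprod{\udag,\phi}_\X - 2\rho\gamma_0^{-\nu/(\mu+1)}\norm{\phi}_\X$ is nonnegative — which is precisely the regime in which the asserted lower bound exceeds the trivial value $\alpha$. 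Everything else reduces to monotonicity of $Q$, the identity $Q^{-1}(\alpha) = -Q^{-1}(1-\alpha)$ already used in Section~\ref{sec:reg}, and the two inequalities of \cref{residual_Phi_estimate}.
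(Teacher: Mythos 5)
Your proof is correct and follows essentially the same route as the paper: the same decomposition of $\scalprod{\udag,T^*\Phimap}_\X$, the same Cauchy--Schwarz step via the source condition (the paper phrases it through the $\V$, $\V'$ norms, you through the explicit source representation, which is the same estimate), and the same two bounds from \cref{residual_Phi_estimate}. Your closing remark about the denominator is in fact more careful than the paper's own argument, which silently replaces $\norm{\Phimap}_\Y$ by its upper bound without noting that this step requires the numerator to be nonnegative (and whose displayed chain contains a sign typo, $\le$ where $\ge$ is meant); you correctly identify that the stated lower bound is only derivable by this route in the regime where it exceeds the trivial value $\alpha$.
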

\begin{proof}
We estimate
\begin{align*}
	\dualpair{\udag,T^*\Phimap}_\X
	&= \dualpair{\udag,\phi}_\X + \dualpair{\udag,T^*\Phimap - \phi}_\X  \\
	&\ge \dualpair{\udag,\phi}_\X - \norm{\udag}_\V\bignorm{T^*\Phimap - \phi}_{\V'} \\
	&\ge \dualpair{\udag,\phi}_\X - \rho\bignorm{\left(T^*T\right)^\frac{\nu}{2} \left(T^*\Phimap - \phi\right)}_\X
\end{align*}
using Assumptions \ref{ass:V}.\ref{ass:CS} and \ref{ass:V}.\ref{ass:udag_bounded}.
Now, it follows from \cref{size_map_test,residual_Phi_estimate} that
\begin{align*}
	\P_\udag\left[\Psimap(Y) = 1\right] &\ge Q\left(Q^{-1}(\alpha) + \frac{\dualpair{\udag,\phi}_X - 2\rho\norm{\left(T^*T\right)^\frac{\nu}{2} \left(T^*\Phimap - \phi\right)}_\X}{\sigma\bignorm{\Phimap}_\Y}\right) \\
	&\ge Q\left(Q^{-1}(\alpha) + \frac{\dualpair{\udag,\phi}_\X - 2\rho(\sigma/\gamma)^{\frac{\nu}{\mu + 1}}\norm{\phi}_\X}{\sigma(\gamma/\sigma)^{\frac{1}{\mu + 1}}\norm{\phi}_\X}\right). \qedhere
\end{align*}
\end{proof}

For a specific noise level $\sigma > 0$ and a given $\gamma > 0$, both estimates in \cref{residual_Phi_estimate} are minimized by choosing $\mu$ as small as possible.
Consequently, the lower bound in \cref{power_a_priori} is maximized for any given $\gamma, \sigma > 0$ by choosing $\mu$ as small as possible subject to the imposed restrictions.
For $1 \le \nu < 4$, the optimal choice in the incompatible case is $\mu = 1 > \frac{\nu}{2} - 1$. The optimal choice for all $\nu > 0$ in the compatible case is $\mu = \nu + 1 > \frac{\nu}{2} - 1$.

If the normalized feature size $\dualpair{\udag,\phi}/\norm{\phi}$ exceeds $2\rho(\sigma/\gamma)^{\nu/(\mu + 1)}$, \cref{power_a_priori} moreover guarantees that the MAP test has nontrivial power, i.e., that $\P_\udag\left[\Psimap(Y) = 1\right] > \alpha$.
In particular, $\Phimap$ has nontrivial power below a certain noise level.
For a fixed noise level, $\gamma$ can be chosen such that this detection threshold is below a certian value.

\begin{corollary}
	Suppose that \cref{ass:spectral} holds and that $\mu > \frac{\nu}{2} - 1$.
	For any a priori choice of the form
	\[ \gamma = \gamma_0 \sigma^\omega \]
	with $\gamma_0 > 0$ and $\omega \in (-\mu,1)$, the power of $\Phimap$ for $\udag \in \X_K$ converges to $1$ as $\sigma \to 0$.
\end{corollary}
\begin{proof}
	For any such choice of $\gamma$, both the term
	\[ 2\rho\gamma^{-\frac{\nu}{\mu + 1}}\sigma^{\frac{\nu}{\mu + 1}} = 2\rho\gamma_0^{-\frac{\nu}{\mu + 1}}\sigma^{\frac{(1 - \omega)\nu}{\mu + 1}} \]
	and the term
	\[ \gamma^\frac{1}{\mu + 1}\sigma^\frac{\mu}{\mu + 1} = \gamma_0^\frac{1}{\mu +1}\sigma^\frac{\mu + \omega}{\mu + 1} \]
	in \eqref{power_a_priori} converge to $0$ as $\sigma \to 0$. Hence, the fraction
	\[ \frac{\frac{\dualpair{\udag,\phi}}{\norm{\phi}} - 2\rho\gamma^{-\frac{\nu}{\mu + 1}}\sigma^{\frac{\nu}{\mu + 1}}}{\gamma^{\frac{1}{\mu + 1}}\sigma^{\frac{\mu}{\mu + 1}}} \]
	becomes positive and converges to $\infty$ for $\udag \in \X_K$ as $\sigma$ tends to $0$. Now, it follows that 
	\[ \lim_{\sigma \to 0} \P_\udag\left[\Psimap(Y) = 1\right] = 1 \]
	from \cref{power_a_priori} and the fact that $Q(t) \to 1$ as $t \to \infty$.
\end{proof}

Note that \cref{power_a_priori} does not guarantee convergence of the power to $1$ for $\omega = 1$, since the term $2\rho(\gamma/\sigma)^{1/(\mu + 1)} = 2\rho\gamma_0^{1/(\mu + 1)}$ does not converge to $0$ in this case.
\Cref{power_a_priori} yields the following bound to the separation rate of $\Phimap$.

\begin{corollary}
	Suppose that \cref{ass:spectral} holds and set
	\rev{\[ \xi_\sigma := \sigma^\frac{\nu}{\nu + 1} \quad \text{for all}~\sigma > 0. \]}
	Let $(u_\sigma^\dagger)_{\sigma \in (0,1]}$ be a family in $\X_K$ such that
	\[ \lim_{\sigma \to 0} \dualpair{u_\sigma^\dagger,\phi}_\X = 0 \quad \text{and} \quad \lim_{\sigma \to 0} \frac{\dualpair{u_\sigma^\dagger,\phi}_\X}{\xi_\sigma} \to \infty. \]
	If $\mu > \frac{\nu}{2} - 1$ and $\gamma$ is chosen a priori as
	\[ \gamma = \sigma^\frac{\nu - \mu}{\nu + 1}, \]	
	then the power of $\Phimap$ for $u_\sigma^\dagger$ converges to $1$ as $\sigma \to 0$.
\end{corollary}
\begin{proof}
	We estimate the power of $\Phimap$ using \cref{power_a_priori}.
	First, note that $\omega := \frac{\nu - \mu}{\nu + 1}$ satisfies
	\[ 1 - \omega = \frac{\mu + 1}{\nu + 1}. \]
	This allows us to express the second term in the numerator of the fraction on the right hand side of \eqref{eq:power_a_priori} as
	\rev{\[ 2\rho\gamma^{-\frac{\nu}{\mu + 1}}\sigma^\frac{\nu}{\mu + 1} 
	= 2\rho\sigma^{\frac{(1 - \omega)\nu}{\mu + 1}}
	= 2\rho\sigma^\frac{\nu}{\nu + 1}
	= 2\rho\xi_\sigma. \]}
	Moreover, we have
	\[ \mu + \omega = \frac{(\mu + 1)\nu}{\nu + 1}, \]
	which lets us express the denominator of said fraction as
	\rev{\[ \gamma^\frac{1}{\mu + 1}\sigma^\frac{\mu}{\mu + 1} 
	= \sigma^\frac{\omega + \mu}{\mu + 1}
	= \sigma^\frac{\nu}{\nu + 1} = \xi_\sigma. \]
	Now, the conditions on $u_\sigma^\dagger$ yield
	\[ \lim_{\sigma \to 0} \frac{\frac{\dualpair{u_\sigma^\dagger,\phi}}{\norm{\phi}} - 2\rho\gamma^{-\frac{\nu}{\mu + 1}}\sigma^{\frac{\nu}{\mu + 1}}}{\gamma^{\frac{1}{\mu + 1}}\sigma^{\frac{\mu}{\mu + 1}}}
	= \lim_{\sigma \to 0} \frac{\dualpair{u_\sigma^\dagger,\phi}}{\norm{\phi}\xi_\sigma} - 2\rho = \infty. \]}
	This implies that the right hand side of \eqref{eq:power_a_priori} converges to $1$ as $\sigma$ tends to $0$.
\end{proof}

\begin{remark}
The choice of $m_0$ according to \eqref{cond_m0} has to take into account the worst case
\[ \scalprod{(T^*T)^\frac{\nu}{2}(T^*\Phimap - \phi), (T^*T)^{-\frac{\nu}{2}}u^\dagger} = \norm{(T^*T)^\frac{\nu}{2}(T^*\Phimap - \phi)} \cdot \norm{(T^*T)^{-\frac{\nu}{2}}u^\dagger} \]
to guarantee level $\alpha$, i.e., when
\[ \udag = (T^*T)^\nu(T^*\Phimap - \phi). \]
When estimating the power, in contrast, the worst case
\[ -\scalprod{(T^*T)^\frac{\nu}{2}(T^*\Phimap - \phi), (T^*T)^{-\frac{\nu}{2}}u^\dagger} = \norm{(T^*T)^\frac{\nu}{2}(T^*\Phimap - \phi)} \cdot \norm{(T^*T)^{-\frac{\nu}{2}}u^\dagger} \]
has to be taken into account, i.e., when $(T^*T)^{-\frac{\nu}{2}}u^\dagger = (T^*T)^\frac{\nu}{2}(\phi - T^*\Phi)$.
Without relating the two terms $\scalprod{\phi,u^\dagger}$ and $\scalprod{(T^*T)^\frac{\nu}{2}(T^*\Phimap - \phi), (T^*T)^{-\frac{\nu}{2}}u^\dagger}$ and without further knowledge about $u^\dagger$ other than the source condition, no better estimate for the power is possible, and the loss in power corresponding to the term $-2\rho\sigma^{-1}\norm{(T^*T)^{\nu/2}(T^*\Phimap - \phi)}/\norm{\Phimap}$ is unavoidable.
\end{remark}

For a fixed noise level, $\gamma$ can be chosen to maximize the lower bound to the power in \cref{power_a_priori}. The optimal value of $\gamma$, however, depends on the normalized feature size $\dualpair{\udag,\phi}/\norm{\phi}$, which is unknown.
Nonetheless, we can choose $\gamma$ optimally and maximize the right hand side of \eqref{eq:power_a_priori} for a certain \emph{expected} normalized feature size $\xi > 0$. This leads to the following result.

\begin{theorem}
	\label{bound_power_xi}
	Suppose that \cref{ass:spectral} holds.
	If $\gamma$ is chosen a priori as
	\begin{equation}
		\label{eq:a_priori_choice_gamma}
		\gamma = \left(\frac{\xi}{2\rho(\nu + 1)}\right)^{-\frac{\mu + 1}{\nu}}\sigma
	\end{equation}
	with $\xi > 0$ and if $\mu > \frac{\nu}{2} - 1$, then the power of $\Psimap$ for $\udag \in \X_K$ is bounded by
	\begin{equation}
		\label{eq:power_xi}
		\P_\udag\left[\Psimap(Y) = 1\right] \ge Q\left(Q^{-1}(\alpha) + \frac{\left(\frac{\dualpair{\udag,\phi}}{\norm{\phi}} - \frac{\xi}{\nu + 1}\right) \left(\frac{\xi}{\nu + 1}\right)^\frac{1}{\nu}}{\sigma(2\rho)^\frac{1}{\nu}}\right).
	\end{equation}	
\end{theorem}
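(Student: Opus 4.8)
The plan is to substitute the proposed value of $\gamma$ into the lower bound of \cref{power_a_priori} and simplify. Recall that \cref{power_a_priori} is phrased in terms of the parametrization $\gamma = \gamma_0\sigma$; the choice \eqref{eq:a_priori_choice_gamma} therefore corresponds to
\[ \gamma_0 = \left(\frac{\xi}{2\rho(\nu+1)}\right)^{-\frac{\mu+1}{\nu}}. \]
Since $\mu > \frac{\nu}{2} - 1$ by hypothesis, \cref{power_a_priori} is applicable with this $\gamma_0$, and it remains only to evaluate the two $\gamma_0$-dependent quantities appearing in \eqref{eq:power_a_priori}.

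First I would raise $\gamma_0$ to the power $-\frac{\nu}{\mu+1}$: the exponents cancel, giving $\gamma_0^{-\frac{\nu}{\mu+1}} = \frac{\xi}{2\rho(\nu+1)}$, hence $2\rho\gamma_0^{-\frac{\nu}{\mu+1}} = \frac{\xi}{\nu+1}$, which is exactly the shift in the numerator of \eqref{eq:power_xi}. Likewise, raising $\gamma_0$ to the power $\frac{1}{\mu+1}$ gives $\gamma_0^{\frac{1}{\mu+1}} = \left(\frac{\xi}{2\rho(\nu+1)}\right)^{-\frac{1}{\nu}} = (2\rho)^{\frac1\nu}\bigl(\tfrac{\xi}{\nu+1}\bigr)^{-\frac1\nu}$, so that $\sigma\gamma_0^{\frac{1}{\mu+1}} = \sigma(2\rho)^{\frac1\nu}\bigl(\tfrac{\xi}{\nu+1}\bigr)^{-\frac1\nu}$. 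Inserting these two identities into \eqref{eq:power_a_priori} and shifting the factor $\bigl(\tfrac{\xi}{\nu+1}\bigr)^{1/\nu}$ from the denominator into the numerator yields precisely \eqref{eq:power_xi}.

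There is essentially no obstacle here beyond careful bookkeeping of the exponents. The only conceptual point worth recording in the write-up is the origin of \eqref{eq:a_priori_choice_gamma}: substituting $t = \gamma_0^{1/(\mu+1)}$, the argument of $Q$ in \eqref{eq:power_a_priori} (with the normalized feature size replaced by its expected value $\xi$) becomes $Q^{-1}(\alpha) + \sigma^{-1}\bigl(\xi t^{-1} - 2\rho\,t^{-\nu-1}\bigr)$, and a one-variable maximization over $t > 0$ gives the stationary point $t^\nu = \tfrac{2\rho(\nu+1)}{\xi}$, equivalently $\gamma_0 = t^{\mu+1} = \bigl(\tfrac{\xi}{2\rho(\nu+1)}\bigr)^{-(\mu+1)/\nu}$, which is \eqref{eq:a_priori_choice_gamma}. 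Since the statement only asserts the inequality \eqref{eq:power_xi} for this particular $\gamma$, however, the proof itself reduces to the substitution described above, and the optimality remark can be relegated to the surrounding discussion.
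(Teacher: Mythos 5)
Your proposal is correct and follows essentially the same route as the paper: both rest entirely on \cref{power_a_priori} and reduce to evaluating $2\rho\gamma_0^{-\nu/(\mu+1)} = \xi/(\nu+1)$ and $\gamma_0^{1/(\mu+1)} = (2\rho)^{1/\nu}(\xi/(\nu+1))^{-1/\nu}$ at the prescribed $\gamma_0$. The only difference is presentational --- the paper carries out the one-variable maximization over $\gamma_0$ explicitly as the body of the proof, whereas you relegate it to a remark and prove the stated inequality by direct substitution, which is logically all the theorem requires.
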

\begin{proof}
We maximize the lower bound in \cref{power_a_priori} over $\gamma$ by maximizing
\[ f(\gamma) := \left(\xi - 2\rho\sigma^{\frac{\nu}{\mu + 1}}\gamma^{-\frac{\nu}{\mu + 1}}\right)\gamma^{-\frac{1}{\mu + 1}}. \]
The derivative
\begin{align*}
	f'(\gamma) &= 2\rho\sigma^{\frac{\nu}{\mu + 1}}\frac{\nu}{\mu + 1}\gamma^{-\frac{\nu + \mu + 1}{\mu + 1}} \gamma^{-\frac{1}{\mu + 1}}
	- \left(\xi - 2\rho\sigma^{\frac{\nu}{\mu + 1}}\gamma^{-\frac{\nu}{\mu + 1}}\right) \frac{1}{\mu + 1}\gamma^{-\frac{\mu + 2}{\mu + 1}} \\
	&= \left(2\rho\sigma^{\frac{\nu}{\mu + 1}}(\nu + 1)\gamma^{-\frac{\nu}{\mu + 1}} - \xi\right) \frac{1}{\mu + 1}\gamma^{-\frac{\mu + 2}{\mu + 1}}
\end{align*}
is equal to zero if and only if
\begin{equation*}
	\label{eq:choice_gamma0}
	\gamma = \left(\frac{\xi}{2\rho(\nu + 1)}\right)^{-\frac{\mu + 1}{\nu}}\sigma =: \bar{\gamma}(\xi,\rho,\nu,\mu,\sigma).
\end{equation*}
$f'$ is positive on $(0,\bar{\gamma})$ and negative on $(\bar{\gamma},\infty)$. Therefore, $f$ attains its maximum
\begin{equation*}
	f(\bar{\gamma}) = \left(\frac{\dualpair{\udag,\phi}}{\norm{\phi}} - 2\rho\sigma^{\frac{\nu}{\mu + 1}}\bar{\gamma}^{-\frac{\nu}{\mu + 1}}\right)\bar{\gamma}^{-\frac{1}{\mu + 1}} 
	= \left(\frac{\dualpair{\udag,\phi}}{\norm{\phi}} - \frac{\xi}{\nu + 1}\right) \left(\frac{\xi}{2\rho(\nu + 1)}\right)^\frac{1}{\nu} \sigma^{-\frac{1}{\mu + 1}}
\end{equation*}
in $\bar{\gamma}$.
This leads to the lower bound
\begin{equation*}
	\beta(\udag) \ge Q\left(Q^{-1}(\alpha) + \frac{\left(\frac{\dualpair{\udag,\phi}}{\norm{\phi}} - \frac{\xi}{\nu + 1}\right) \left(\frac{\xi}{\nu + 1}\right)^\frac{1}{\nu}}{\sigma(2\rho)^\frac{1}{\nu}}\right)
\end{equation*}
for the power of $\Psimap$ for every $\xi > 0$ and any $\udag \in \X_K$.
\end{proof}

The lower bound \eqref{eq:power_xi} guarantees a nontrivial power for any $\udag \in \X_K$ whose normalized feature size $\dualpair{\udag,\phi}_\X/\norm{\phi}_\X$ exceeds $\xi/(\nu + 1)$ and maximizes \eqref{eq:power_a_priori} if the normalized feature size is equal to $\xi$. 
Note that the optimal choice of $\gamma$ for a given $\xi$ is proportional to $\sigma$.
Note, moreover, that the resulting lower bound is independent of the choice of $\mu$.
On the one hand, \cref{bound_power_xi} provides an oracle lower bound for the power of $\Phimap$ with the choie $\xi = \dualpair{\udag,\phi}_\X/\norm{\phi}_\X$.
On the other hand, $\xi$ can be used to calibrate the detection threshold of the test.
Due to the source condition, we know that
\[ \frac{\dualpair{\udag,\phi}_\X}{\norm{\phi}_\X} \le \bignorm{(T^*T)^\frac{\nu}{2}w}_\X \le \rho\norm{T^*T}^\frac{\nu}{2}, \]
so that only a choice $\xi \in (0,\rho\norm{T^*T}^{\nu/2}]$ is reasonable.

\subsection{A posteriori choice of prior covariance}

\rev{In the previous section, we have seen that one can define an optimal oracle MAP test for a given $\mu \ge 1$ by minimizing 
\[ \gamma_0 \mapsto J_{T\udag}^\Y(\Phimap(\gamma_0\sigma)). \]}
However, the functional $J_{T\udag}^\Y$ is unaccessible in practice because its evaluation requires knowledge of $\udag$. For this reason, we apply a similar approach as in \cite{KWW:2023}.
We estimate $\scalprod{T\udag,\Phi}_\Y$ by $\dualpair{Y,\Phi}$ and replace $J_{T\udag}^\Y$ by the empirical functional
\[ J_Y^\Y(\Phi) = \frac{\norm{T^*\Phi - \phi}_{\V'} - \dualpair{Y, \Phi}_{\Y^* \times \Y}}{\norm{\Phi}_\Y} \]
which does not require knowledge of $\udag$.
\rev{Then, we choose $\gamma_0 > 0$ as minimizer of the penalized objective functional
\[ \gamma_0 \mapsto J_Y^\Y\left(\Phimap(\gamma_0\sigma)\right) + \omega\left(\log \gamma_0\right)^2 \]
with regularization parameter $\omega > 0$ and set $\gamma = \gamma_0\sigma$.}
Here, the main purpose the penalty is to prevent a choice of $\Phimap$ with huge norm in case that $J_Y^\Y$ is a positive functional.

Since this choice of $\gamma$ depends on $Y$, the power of the resulting test $\Psimap$ is no longer given by \eqref{eq:size_map_test_alpha} when it is applied to the same data $Y$. Instead, \eqref{eq:size_map_test_alpha} describes the power of the MAP test constructed using one realization $Y_1$ of the data and applied to another independent realization $Y_2$.
In the following, we will distinguish between the $1$ sample MAP test, which is applied to the same data used for its construction, and the $2$ sample MAP test, which is applied to another independent data sample.


\section{Numerical simulations}

In this section, we will perform numerical simulations investigating the frequentistic power of the tests derived in the previous sections over a range of noise levels. For this purpose, we consider three different test problems. 

\subsection{Test problems}

\subsubsection{Deconvolution}
\label{sec:deconv}

The convolution between two functions $h \in L^1(\R)$ and $u \in L^2(\R)$ is defined by
\begin{equation*}
	(h * u)(x) = \int_{\R} h(x - z) u(z) \di z \quad \text{for all}~x \in \R.
\end{equation*}
The Fourier transform of $h \in L^1(\R)$ is, moreover, defined by
\begin{equation*}
	(\Fourier h)(\xi) = \int_{\R} h(x) \exp \left(2\pi i\scalprod{x,\xi}{}\right) \di x \quad \text{for all}~\xi \in \R.
\end{equation*}
We initially consider the convolution operator 
\[
T_{\mathrm{conv}} : L^2(\R) \to L^2(\R), \qquad Tu := h * u
\]
with the kernel $h \in L^1(\R)$ defined in terms of its Fourier transform
\begin{equation}
	\label{eq:def_h}
	(\Fourier h)(\xi) = \left(1 + 0.06^2\xi^2\right)^{-2} \quad \text{for all}~\xi \in \R.
\end{equation}

The convolution between $h$ and $u$ can be approximated by the \emph{periodic convolution}
\[ (\tilde{h} *_P \tilde{u})(x) = \int_{-\frac{P}{2}}^{\frac{P}{2}} \tilde{h}(x - z)\tilde{u}(z) \di z, \quad x \in \R, \]
between the $P$-\emph{periodization} $\tilde{h}$ of $h$,
\begin{equation*}
	\tilde{h}(x) := h_{\text{per},P}(x) := \sum_{l \in \Z} h(x + lP) \quad \text{for all}~x \in \R,
\end{equation*}
and the $P$-periodization $\tilde{u} := u_{\text{per},P}$ of $u$.
In the following, we periodize with $P := 2$ and consider the \emph{periodic} convolution operator $T_{\mathrm{conv},P}\tilde{u} := \tilde{h} *_P \tilde{u}$ on 
\[ \X := \Y := L^2(-1,1) \]
associated with the periodized kernel $\tilde{h} = h_{\text{per},P} \in L^1(-1,1)$. Throughout, we moreover assume that $\esssupp \udag \subseteq [-\frac12,\frac12]$.
By \cref{norm_conv_op}, $T_{\mathrm{conv},P}$ is bounded with
\[ \norm{T_{\mathrm{conv},P}}_{L^2 \to L^2} = 1. \]
We discretize the problem using a uniform grid of size $N = 1024$ and compute the discrete convolution using a fast Fourier transform. For more details on the implementation of the convolution see Appendix A in \cite{KWW:2023}.

\subsubsection{Differentiation}
\label{sec:differentiation}

As a second example, we consider the problem to estimate the second weak derivative of a function $y \in H^2(0,1)$. For $m \in \N$, let us define a linear operator $\widetilde{T}_m$: $L^2(-1,1) \to L^2(-1,1)$ by
\[ [\widetilde{T}_mf](x) := \hat{f}(0) + \sum_{k \in \Z \setminus \{0\}} -\left(\pi i k\right)^{-m} \hat{f}(k) \exp \left(\pi i k x\right) \quad \text{for all}~x \in (-1,1), \]
where $\hat{f}(k)$ denotes the $k$th Fourier coefficient
\begin{equation}
	\label{eq:fhat}
	\hat{f}(k) := \frac12 \int_{-1}^1 f(x) \exp \left(-\pi i k x\right) \di x \quad \text{for all}~k \in \Z.
\end{equation}
By differentiating this series term-wise, we find that
\[ [\widetilde{T}_mf]^{(m)}(x) = \sum_{k \in \Z \setminus \{0\}} -\hat{f}(k) \exp \left(\pi i k x\right)
	= - \sum_{k \in \Z} \hat{f}(k) \exp \left(\pi i k x\right) = -f(x) \]
almost everywhere for all $f \in L^2(-1,1)$ with $\hat{f}(0) = 0$.
In particular, $\ran \widetilde{T}_m \subseteq H^m(-1,1)$.
Let us denote the extension operator of a function $u \in L^2(0,1)$ to an odd function on $(-1,1)$ by $E$,
\[ [Eu](x) = \begin{cases}
	-u(-x) & \text{for all}~x \in (-1,0), \\
	0 & \text{for}~x = 0, \\
	u(x) & \text{for all}~x \in (0,1),
\end{cases} \]
and the restriction of a function on $(-1,1)$ to a function on $(0,1)$ by $R$.
Then, $[R\widetilde{T}_mEu]^{(m)} = -u$ for all $u \in L^2(0,1)$ since $\hat{f}(0) = 0$ for odd functions $f \in L^2(-1,1)$.
Now, the problem of estimating the negative $m$th weak derivative $-y^{(m)}$ of $y \in H^m(0,1)$ can be formulated as solving the equation $Tu = y$ in $L^2(0,1)$ with the antiderivative operator
\[
T_{\mathrm{antider},m} : L^2(0,1) \to L^2(0,1), \qquad T_{\mathrm{antider},m} := R\widetilde{T}_mE.
\]
This representation of the forward operator has the advantage that it can be implemented efficiently using a fast Fourier transform.

The condition that $[\widetilde{T}_mf]^{(m)} = -f$ almost everywhere on $(-1,1)$ uniquely determines the eigenvalues $\lambda_k = -(\pi i k)^{-m}$, $k \in \Z \setminus \{0\}$, of the operator $\widetilde{T}_m$ since its eigenvectors $\{\exp(\pi i k \cdot)\}_{k\in\Z \setminus \{0\}}$ form an orthonormal basis of $\{f \in L^2(-1,1): \hat{f}(0) = 0\}$. It does not determine the eigenvalue $\lambda_0$ of $\widetilde{T}$ associated with the space of constant functions, which is chosen arbitrarily as $1$. The definition of the operator $T$ as $R\widetilde{T}E$ is, however, independent of the choice of $\lambda_0$ since $\widehat{Eu}(0) = 0$ for all $u \in L^2(0,1)$.
As a consequence, $\lambda_1 = -(\pi i)^{-m}$ and $\lambda_{-1} = -(-\pi i)^{-m}$ are the largest eigenvalues of $T_{\mathrm{antider},m}$ (in absolute value), so that its norm is given by
\[ \norm{T_{\mathrm{antider},m}}_{L^2 \to L^2} = \abs{\lambda_1} = \pi^{-m}. \]

In the following, we restrict ourselves to the case $m = 2$. Here, $T_{\text{antider},2}$ can also be expressed as a Fredholm operator, see \cref{sec:fredholm}.
In order to make the differentiation problem comparable with the deconvolution problem, we use a normalized version of the operator $T_{\mathrm{antider},2}$ for our numerical simulations. Specifically, we scale the operator such that its norm is equal to $1$.
We discretize the problem using a uniform grid 
\begin{equation}
	\label{eq:grid}
	\left\{ \frac{1}{N+1}, \dots, \frac{N}{N+1} \right\}
\end{equation}
of size $N = 1024$.

\subsubsection{Backward heat equation} 

As a third example, we consider the backward heat equation problem on the interval $(0,1)$ with Dirichlet boundary conditions. Given measurements of $y = f \left(\cdot, t_0\right)$ at time $t_0 > 0$, we want to find the initial heat distribution $\udag$ in
\[
\begin{cases} \frac{\partial f}{\partial t} \left(x,t\right) = \frac{\partial^2 f}{\partial t^2} \left(x,t\right) & \mathrm{in} \left(0,1\right) \times \left(0,t_0\right), \\ 
	f \left(x,0\right) = u^\dagger(x) & \mathrm{on} \left[0,1\right], \\
	f \left(0,t\right) = f\left(1,t\right) = 0 & \mathrm{on} \left[0,t_0\right],\end{cases}
\]
see e.g. \cite{w18}. This gives rise to the solution operator 
\[ T_\mathrm{heat} : L^2(0,1) \to L^2(0,1), \qquad T_\mathrm{heat} u^\dagger := f \left(\cdot, t_0\right). \]

Numerically, we implement this operator similarly as the antiderivative operator using a fast Fourier transform. It can be written as
$T_\mathrm{heat} = R\widetilde{T}_\mathrm{heat}E$,
where $\widetilde{T}_\mathrm{heat}$: $L^2(-1,1) \to L^2(-1,1)$ is defined as
\[ [\widetilde{T}_\mathrm{heat}f](x) := \sum_{k \in \Z} \exp\left(-\pi^2 t_0 k^2\right) \hat{f}(k) \exp(\pi ikx) \quad \text{for all}~x \in (-1,1), \]
and $\hat{f}(k)$ denotes the $k$th Fourier coefficient of $f$ as in \eqref{eq:fhat}, see \cite{w18}.
Since $\widehat{Eu}(0) = 0$ for all $u \in L^2(0,1)$, the largest eigenvalues (in absolute value) of $T_\mathrm{heat}$ are $\lambda_1 = \exp(-\pi^2 t_0)$ and $\lambda_{-1} = \exp(-\pi^2 t_0)$. Consequently, its norm is given by
\[ \norm{T_\mathrm{heat}}_{L^2 \to L^2} = \abs{\lambda_1} = \exp(-\pi^2 t_0). \]
Note that compared to the deconvolution and the differentiation problem, the backward heat equation determines a severely ill-posed forward operator in the sense that its singular values decay exponentially.

Throughout, we set $t_0 := 10^{-4}$.
As in the differentiation problem, we normalize the operator $T_{\mathrm{heat}}$ for our numerical simulations to have norm $1$ so that the problem is comparable to the previous two.
We discretize the problem using the uniform grid \eqref{eq:grid} of size $N = 1024$.

\subsection{Considered scenarios}

As the feature of interest, we consider the $L^2$-inner product of a function $\udag$ with a symmetric beta kernel on the interval $[c,c+l]$. We choose
\[ \phi(x) := p_{c,l}(x;\beta) \propto \begin{cases}
	x^{\beta - 1}(l - x)^{\beta - 1} & \text{if}~x \in [c,c+l], \\
	0 & \text{else},
\end{cases} \]
where $\beta > 0$, $c \in \R$, and $l := \frac{5}{128}$. Throughout, $p_{c,l}$ is normalized such that $\norm{p_{c,l}}_{L^2(\R)} = 1$. Note that for $\beta = 1$, $p_{c,l}(\cdot,1)$ is simply the indicator function of the interval $[c,c+l]$. We choose $c$ as the center of the considered interval, i.e., as $c := 0$ for the deconvolution problem and $c := \frac12$ for the differentiation problem and the backward heat equation.

For the deconvolution problem, the choice $\phi = p_{c,l}$ can be interpreted as performing support inference, or more specifically, deciding if $\supp \udag \cap (c,c+l) = \emptyset$ (under nonnegativity constraints). For the differentiation problem, the choice $\phi = p_{c,l}$ corresponds to deciding whether $y^\dagger = T_{\mathrm{antider},2}\udag$ is linear on the interval $(c, c+l)$ (under concavity constraints on $y^\dagger$).

For all three problems, we consider a choice of $\beta = 1$, which results in an \emph{incompatible scenario} with $\phi \notin \ran T^*$.
For the deconvolution and the differentiation problem, we additionally consider a choice of $\beta = \beta_{\mathrm{conv}} := 5$ and $\beta = \beta_{\mathrm{antider}} := 3$, respectively, which guarantees a \emph{compatible scenario} with $\phi \in \ran T^*$, in which the unregularized test $\Psi_0$ is well-defined. As argued in \cite{KWW:2023}, the operator $T_{\mathrm{conv}}$ satisfies $\ran T_{\mathrm{conv}} \subseteq H^4(-1,1)$ for the periodic Sobolev space \[ H^t(-1,1) = \left\{f \in L^2(-1,1): k \mapsto (1 + k^2)^\frac{t}{2}\widehat{f}(k) \in \ell^2(\Z)\right\} \]
equipped with the norm
\[ \norm{f}_{H^t} = P^\frac12 \left(\sum_{k \in \Z} \left(1 + k^2\right)^t \bigabs{\widehat{f}(k)}^2\right)^\frac12. \]
Since for $\beta \in \N$, the function $\phi$ is $(\beta-1)$-times differentiable, it holds $\phi \in H^{\beta - 1}(-1,1)$ and furthermore if we choose $\beta = \beta_{\mathrm{conv}} = 5$, then we obtain $\phi \in \ran T_{\mathrm{conv}}^*$.  Similarly, it is clear by definition that $\ran T_{\mathrm{antider},2} \subseteq H^2(-1,1)$, and by choosing $\beta = \beta_{\mathrm{antider}} = 3$ we obtain $\phi \in \ran T_{\mathrm{antider},2}^*$.
Note, that due to the infinite smoothing properties of $T_{\mathrm{heat}}$, there is no $\beta \in\N$ such that $\phi \in \ran T_{\mathrm{heat}}^*$. I.e., for the backward heat equation there is no compatible scenario.
In the incompatible scenario, we compute a probe element $\Phi_0$ for the unregularized test numerically as the minimum norm solution to the equation $T^*\Phi = \phi$ despite its ill-posedness.

For the deconvolution and the differentiation problem, we choose a symmetric beta kernel on the interval $[c+\lambda,c+\lambda+l]$ as the truth,
\[ \udag(x) := p_{c+\lambda,l}(x;\delta) \quad \text{for all}~x \in \R, \]
where $\delta > 0$ and $\lambda := \frac13 l = \frac{5}{384}$.
In the devoncolution problem, we choose $\delta = \delta_\mathrm{conv} := 5$.
In the differentiation problem, we choose $\delta = \delta_\mathrm{antider} := 3$.
For the backward heat equation, we define a source element $w$ by
\[ w(x) \propto \begin{cases}
	1 & \text{if}~x \in [c + \lambda, c + \lambda + l), \\
	-1 & \text{if}~x \in [c + \lambda - \frac{l}{2}, c + \lambda + \frac{3l}{2}) \setminus [c + \lambda, c + \lambda + l), \\
	0 & \text{else},
\end{cases} \]
set $\udag = (T^*T)^{\frac12}w$ and normalize $w$ such that $\norm{\udag}_{L^2(0,1)} = 1$. This way, \cref{ass:spectral}.\ref{ass:source_cond} is satisfied with $\nu = 1$ in all three problems.

\setlength{\fwidthscen}{4.2cm}
\setlength{\fheightscen}{3.2cm}
\setlength{\fwidth}{6.5cm}
\setlength{\fheight}{3.2cm}
\newcommand\sigmamindeconv{1e-5}
\newcommand\sigmamindiff{1e-6}
\newcommand\sigmaminheat{1e-5}
\newcommand\sigmamaxdeconv{1}
\newcommand\sigmamaxdiff{0.1}
\newcommand\sigmamaxheat{1}
\definecolor{myblue}{rgb}{0.00000,0.44700,0.74100}
\definecolor{myred}{rgb}{0.85000,0.32500,0.09800}
\definecolor{myyellow}{rgb}{0.92900,0.69400,0.12500}
\definecolor{mygreen}{rgb}{0,0.66667,0}
\definecolor{mylightblue}{rgb}{0.5,0.7235,0.8705}

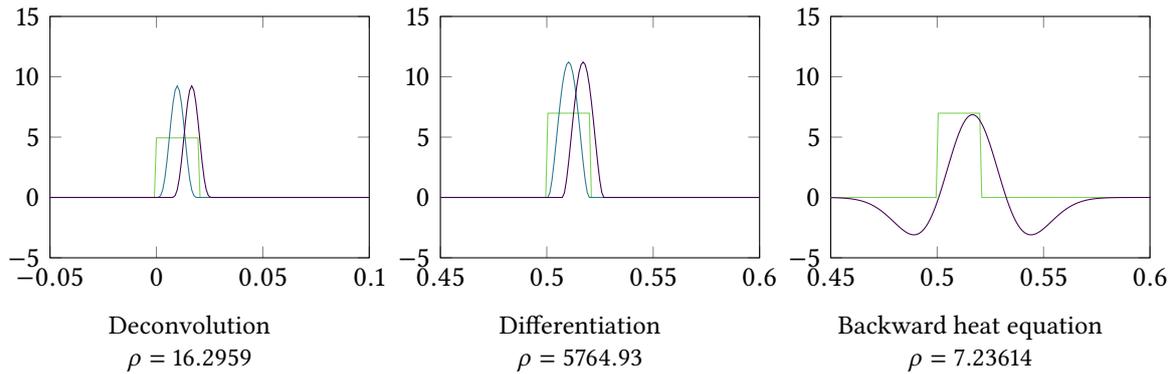
\begin{figure}[!htb]
	\centering
	\subcaptionbox*{Deconvolution\\$\rho = 16.2959$}[0.32\textwidth]{\begin{tikzpicture}[baseline]
	\begin{axis}[%
		width=\fwidthscen,
		height=\fheightscen,
		scale only axis,
		xmin=-0.05,
		xmax=0.1,
		ymin=-5,
		ymax=15,
		xticklabel style={/pgf/number format/.cd,fixed,precision=2},
	]
		\addplot[color of colormap=800 of viridis] table [x index=0, y index=1] {data/scenario_prob1_phi1.dat};
		\label{plot:phi_nonsmooth}
		\addplot[color of colormap=400 of viridis] table [x index=0, y index=1] {data/scenario_prob1_phi5.dat};
		\label{plot:phi_smooth}
		\addplot[color of colormap=0 of viridis] table [x index=0, y index=2] {data/scenario_prob1_phi1.dat};
		\label{plot:udag}
	\end{axis}
\end{tikzpicture}
	\subcaptionbox*{Differentiation\\$\rho = 5764.93$}[0.32\textwidth]{\begin{tikzpicture}[baseline]
	\begin{axis}[%
		width=\fwidthscen,
		height=\fheightscen,
		scale only axis,
		xmin=0.45,
		xmax=0.6,
		ymin=-5,
		ymax=15,
		xticklabel style={/pgf/number format/.cd,fixed,precision=2},
	]
		\addplot[color of colormap=800 of viridis] table [x index=0, y index=1] {data/scenario_prob2_phi1.dat};
		\addplot[color of colormap=400 of viridis] table [x index=0, y index=1] {data/scenario_prob2_phi3.dat};
		\addplot[color of colormap=0 of viridis] table [x index=0, y index=2] {data/scenario_prob2_phi1.dat};
	\end{axis}
\end{tikzpicture}
	\subcaptionbox*{Backward heat equation\\$\rho = 7.23614$}[0.32\textwidth]{\begin{tikzpicture}[baseline]
	\begin{axis}[%
		width=\fwidthscen,
		height=\fheightscen,
		scale only axis,
		xmin=0.45,
		xmax=0.6,
		ymin=-5,
		ymax=15,
		xticklabel style={/pgf/number format/.cd,fixed,precision=2},
	]
		\addplot[color of colormap=800 of viridis] table [x index=0, y index=1] {data/scenario_prob3_phi1.dat};
		\addplot[color of colormap=0 of viridis] table [x index=0, y index=2] {data/scenario_prob3_phi1.dat};
	\end{axis}
\end{tikzpicture}
	\caption{The function $\phi_{l,\beta}$ with $l = \frac{5}{128}$ for $\beta = 1$ (\ref{plot:phi_nonsmooth}) and the corresponding $\beta_{\mathrm{conv}}, \beta_{\mathrm{antider}}$ chosen such that $\phi_{l,\beta} \in \ran T^*$ (\ref{plot:phi_smooth}), and the truth $u^\dagger$ (\ref{plot:udag}).}
	\label{fig:scenarios}
\end{figure}

\subsection{Computation of power and level}

The exact power of the unregularized test is computed using \eqref{eq:size_reg_test_alpha}.
The exact power of the oracle MAP tests with optimal choice of $\gamma$ and with a priori choice of $\gamma$ according to \eqref{eq:a_priori_choice_gamma} and $\xi = \scalprod{\udag,\phi}/\norm{\phi}$ are computed using \eqref{eq:size_map_test_alpha}. Moreover, the lower bound for the power of the oracle MAP test from \cref{bound_power_xi} is computed for comparison using \eqref{eq:power_xi}.
For all these tests, the rejection threshold $c$ is chosen according to \eqref{eq:c_reg} to guarantee a level of at most $\alpha = 0.1$ under \cref{ass:spectral}.

For the a posteriori choice of $\gamma$, the penalty term is weighted with $\omega = 10^{-4}$ in case of the deconvolution and heat equation problem, and with $\omega = 10^{-10}$ in case of the differentiation problem. In all cases, $\omega$ was chosen roughly as large as possible while ensuring covergence of the power of the MAP test to $1$ in our simulations.

The power of the MAP test with a posteriori choice of $\gamma$ is computed empirically using $M_\text{power} = 1000$ data samples as rejection percentage. 
Its rejection threshold $c$ is chosen according to \eqref{eq:c_reg} with a conservative choice of $\alpha = \alpha_1 := 0.05$.
In addition, the power of the MAP test with a posteriori choice of $\gamma$ applied to a second data sample, independent from that used for its construction, is computed empirically as
\[ \frac{1}{M_\text{power}} \sum_{m = 1}^{M_\text{power}} Q\left(Q^{-1}(\alpha) - \frac{J_{T\udag}^\Y(\Phimap(y_m))}{\sigma}\right) \]
using $M_\text{power} = 1000$ data samples $y_1, \dots, y_m$, see also section 6.1 in \cite{KWW:2023}.
Here, the rejection threshold is again chosen according to \eqref{eq:c_reg} to guarantee a level of at most $\alpha = 0.1$.

The level of the MAP test with a posteriori choice of $\gamma$ is estimated by the maximum of its empirical size over $N_\text{level} = 100$ values of $\udag$ that satisfy $H$ and \cref{ass:spectral}.\ref{ass:source_cond}. The size is computed empirically as the rejection percentage using $M_\text{level} = 500$ data samples for each value of $\udag$. The values $u_1^\dagger, \dots, u_{N_\text{level}}^\dagger$ of $\udag$ are drawn randomly. 
To this end, a discrete source element $\underline{w_n}_N \in \R^N$ is drawn from a uniform distribution on the unit sphere $S^{N-1}$.
Then, the discretization of $u_n^\dagger$ is set as
\[ \underline{u_n^\dagger}_N = \begin{cases}
	\rho\underline{(T^*T)^\frac{\nu}{2}}_N \underline{w_n}_N & \text{if}~\bigscalprod{\underline{\phi}_N,\underline{(T^*T)^\frac{\nu}{2}}_N \underline{w_n}_N}_2 \le 0, \\
	-\rho\underline{(T^*T)^\frac{\nu}{2}}_N \underline{w_n}_N & \text{if}~\bigscalprod{\underline{\phi}_N,\underline{(T^*T)^\frac{\nu}{2}}_N \underline{w_n}_N}_2 > 0,
\end{cases} \]
where $\underline{(T^*T)^\frac{\nu}{2}}_N$: $\R^N \to \R^N$ denotes the discretization of the operator $(T^*T)^\frac{\nu}{2}$.

Power and level are computed for a range of noise levels starting at $\sigma = 1$. Then, the noise level is decreased iteratively by a factor of $0.9$ in case of the power and by $0.9^5$ in case of the level.
The computation of the empirical power and level of the MAP test with a posteriori choice of $\gamma$ is aborted dynamically. For the power, the abort criteria is to exceed a value of $0.99$ over one magnitude of noise levels. For the level, the abort criteria is to stay below a value of $0.01$ over one magnitude of noise levels.

\subsection{Results}

\subsubsection{Deconvolution}

In \cref{fig:prob1,fig:gamma0_prob1}, we depict the results for the deconvolution problem.
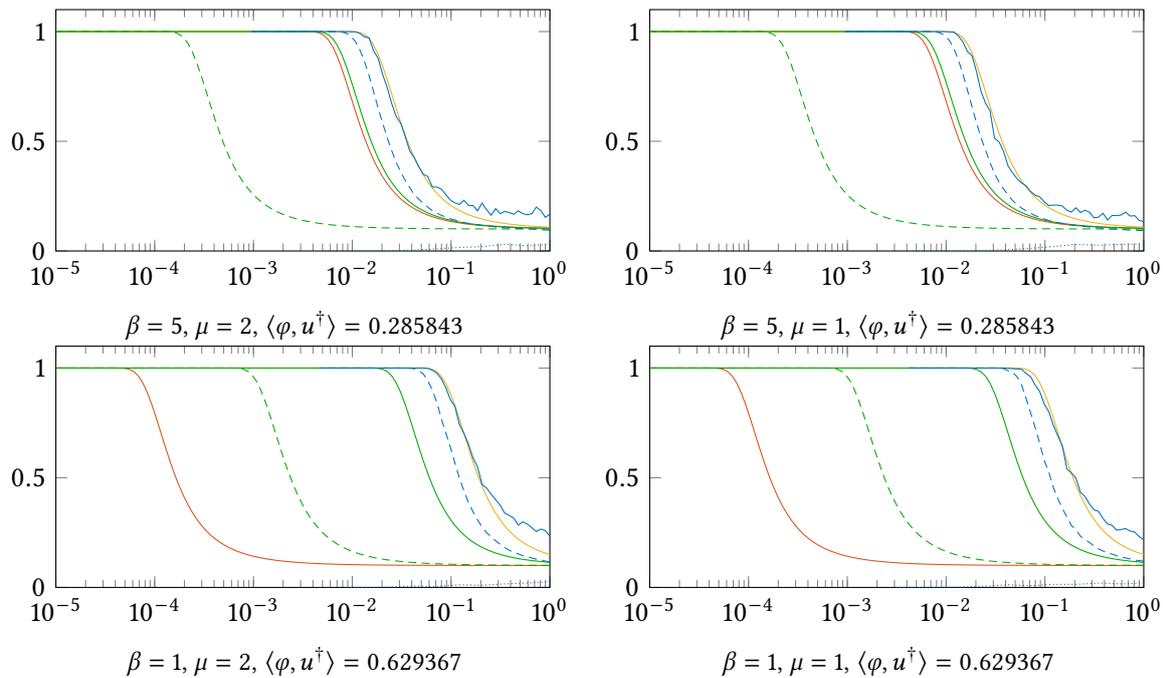
\begin{figure}[!htb]
	\centering
	\subcaptionbox*{$\beta = 5$, $\mu = 2$, $\scalprod{\phi,\udag} = 0.285843$}[0.49\textwidth][l]{\begin{tikzpicture}[baseline]

\begin{axis}[%
	width=\fwidth,
	height=\fheight,
	scale only axis,
	xmode=log,
	xmin=\sigmamindeconv,
	xmax=\sigmamaxdeconv,
	xminorticks=true,
	ymin=0,
	ymax=1.1,
]
\addplot[color=myred] table [x index=0, y index=1] {data/power_prob1_a2_phi5_alpha0.05_omega0.0001_nu1_mu2_unreg.dat}; 
\label{plot:unreg}
\addplot[color=myyellow] table [x index=0, y index=2] {data/power_prob1_a2_phi5_alpha0.05_omega0.0001_nu1_mu2_unreg.dat};
\label{plot:oracle_map}
\addplot[color=mygreen] table [x index=0, y index=3] {data/power_prob1_a2_phi5_alpha0.05_omega0.0001_nu1_mu2_unreg.dat}; 
\label{plot:oracle_pri_map}
\addplot[color=mygreen,densely dashed] table [x index=0, y index=4] {data/power_prob1_a2_phi5_alpha0.05_omega0.0001_nu1_mu2_unreg.dat}; 
\label{plot:est_pri_map}
\addplot[color=myblue,densely dashed] table [x index=0, y index=1] {data/power_prob1_a2_phi5_alpha0.05_omega0.0001_nu1_mu2_M1000.dat};
\label{plot:map_2s}
\addplot[color=myblue] table [x index=0, y index=2] {data/power_prob1_a2_phi5_alpha0.05_omega0.0001_nu1_mu2_M1000.dat};
\label{plot:map_1s}
\addplot[color=myblue,densely dotted] table [x index=0, y index=1] {data/level_prob1_a2_phi5_alpha0.05_omega0.0001_nu1_mu2_M500_N100.dat};
\label{plot:level_map_1s}

\end{axis}

\end{tikzpicture}
	\subcaptionbox*{$\beta = 5$, $\mu = 1$, $\scalprod{\phi,\udag} = 0.285843$}[0.49\textwidth][l]{\begin{tikzpicture}[baseline]

\begin{axis}[%
	width=\fwidth,
	height=\fheight,
	scale only axis,
	xmode=log,
	xmin=\sigmamindeconv,
	xmax=\sigmamaxdeconv,
	xminorticks=true,
	ymin=0,
	ymax=1.1,
]
\addplot[color=myred] table [x index=0, y index=1] {data/power_prob1_a2_phi5_alpha0.05_omega0.0001_nu1_mu1_unreg.dat}; 
\addplot[color=myyellow] table [x index=0, y index=2] {data/power_prob1_a2_phi5_alpha0.05_omega0.0001_nu1_mu1_unreg.dat};
\addplot[color=mygreen] table [x index=0, y index=3] {data/power_prob1_a2_phi5_alpha0.05_omega0.0001_nu1_mu1_unreg.dat}; 
\addplot[color=mygreen,densely dashed] table [x index=0, y index=4] {data/power_prob1_a2_phi5_alpha0.05_omega0.0001_nu1_mu1_unreg.dat}; 
\addplot[color=myblue,densely dashed] table [x index=0, y index=1] {data/power_prob1_a2_phi5_alpha0.05_omega0.0001_nu1_mu1_M1000.dat};
\addplot[color=myblue] table [x index=0, y index=2] {data/power_prob1_a2_phi5_alpha0.05_omega0.0001_nu1_mu1_M1000.dat};
\addplot[color=myblue,densely dotted] table [x index=0, y index=1] {data/level_prob1_a2_phi5_alpha0.05_omega0.0001_nu1_mu1_M500_N100.dat};

\end{axis}

\end{tikzpicture}
	\subcaptionbox*{$\beta = 1$, $\mu = 2$, $\scalprod{\phi,\udag} = 0.629367$}[0.49\textwidth][l]{\begin{tikzpicture}[baseline]

\begin{axis}[%
	width=\fwidth,
	height=\fheight,
	scale only axis,
	xmode=log,
	xmin=\sigmamindeconv,
	xmax=\sigmamaxdeconv,
	xminorticks=true,
	ymin=0,
	ymax=1.1,
]
\addplot[color=myred] table [x index=0, y index=1] {data/power_prob1_a2_phi1_alpha0.05_omega0.0001_nu1_mu2_unreg.dat}; 
\addplot[color=myyellow] table [x index=0, y index=2] {data/power_prob1_a2_phi1_alpha0.05_omega0.0001_nu1_mu2_unreg.dat};
\addplot[color=mygreen] table [x index=0, y index=3] {data/power_prob1_a2_phi1_alpha0.05_omega0.0001_nu1_mu2_unreg.dat}; 
\addplot[color=mygreen,densely dashed] table [x index=0, y index=4] {data/power_prob1_a2_phi1_alpha0.05_omega0.0001_nu1_mu2_unreg.dat}; 
\addplot[color=myblue,densely dashed] table [x index=0, y index=1] {data/power_prob1_a2_phi1_alpha0.05_omega0.0001_nu1_mu2_M1000.dat};
\addplot[color=myblue] table [x index=0, y index=2] {data/power_prob1_a2_phi1_alpha0.05_omega0.0001_nu1_mu2_M1000.dat};
\addplot[color=myblue,densely dotted] table [x index=0, y index=1] {data/level_prob1_a2_phi1_alpha0.05_omega0.0001_nu1_mu2_M500_N100.dat};

\end{axis}

\end{tikzpicture}
	\subcaptionbox*{$\beta = 1$, $\mu = 1$, $\scalprod{\phi,\udag} = 0.629367$}[0.49\textwidth][l]{\begin{tikzpicture}[baseline]

\begin{axis}[%
	width=\fwidth,
	height=\fheight,
	scale only axis,
	xmode=log,
	xmin=\sigmamindeconv,
	xmax=\sigmamaxdeconv,
	xminorticks=true,
	ymin=0,
	ymax=1.1,
]
\addplot[color=myred] table [x index=0, y index=1] {data/power_prob1_a2_phi1_alpha0.05_omega0.0001_nu1_mu1_unreg.dat}; 
\addplot[color=myyellow] table [x index=0, y index=2] {data/power_prob1_a2_phi1_alpha0.05_omega0.0001_nu1_mu1_unreg.dat};
\addplot[color=mygreen] table [x index=0, y index=3] {data/power_prob1_a2_phi1_alpha0.05_omega0.0001_nu1_mu1_unreg.dat}; 
\addplot[color=mygreen,densely dashed] table [x index=0, y index=4] {data/power_prob1_a2_phi1_alpha0.05_omega0.0001_nu1_mu1_unreg.dat}; 
\addplot[color=myblue,densely dashed] table [x index=0, y index=1] {data/power_prob1_a2_phi1_alpha0.05_omega0.0001_nu1_mu1_M1000.dat};
\addplot[color=myblue] table [x index=0, y index=2] {data/power_prob1_a2_phi1_alpha0.05_omega0.0001_nu1_mu1_M1000.dat};
\addplot[color=myblue,densely dotted] table [x index=0, y index=1] {data/level_prob1_a2_phi1_alpha0.05_omega0.0001_nu1_mu1_M500_N100.dat};

\end{axis}

\end{tikzpicture}
	\caption{Exact powers of the unregularized test (\ref{plot:unreg}), the oracle MAP test (\ref{plot:oracle_map}), and the oracle a priori MAP test (\ref{plot:oracle_pri_map}), the oracle lower bound for the power of the a priori MAP test (\ref{plot:est_pri_map}), the empirical power of the $2$ sample MAP test (\ref{plot:map_2s}), and the empirical power (\ref{plot:map_1s}) and level (\ref{plot:level_map_1s}) of the $1$ sample MAP test for the deconvolution problem with different values of $\beta$ and $\mu$.}
	\label{fig:prob1}
\end{figure}
%
\begin{figure}[!htb]
	\centering
	\subcaptionbox*{$\beta = 5$, $\mu = 2$, $\scalprod{\phi,\udag} = 0.285843$}[0.49\textwidth][l]{\begin{tikzpicture}[baseline]

\begin{axis}[%
	width=\fwidth,
	height=\fheight,
	scale only axis,
	xmode=log,
	xmin=\sigmamindeconv,
	xmax=\sigmamaxdeconv,
	xminorticks=true,
	ymode=log,
	ymin=1e1,
	ymax=1e14,
]
\addplot[color=myyellow] table [x index=0, y index=5] {data/power_prob1_a2_phi5_alpha0.05_omega0.0001_nu1_mu2_unreg.dat};
\addplot[color=mylightblue, name path=A] table [x index=0, y index=4] {data/power_prob1_a2_phi5_alpha0.05_omega0.0001_nu1_mu2_M1000.dat};
\addplot[color=mylightblue, name path=B] table [x index=0, y index=5] {data/power_prob1_a2_phi5_alpha0.05_omega0.0001_nu1_mu2_M1000.dat};
\addplot[color=mylightblue] fill between [of=A and B];
\label{plot:gamma0_quantiles}
\addplot[color=myblue] table [x index=0, y index=3] {data/power_prob1_a2_phi5_alpha0.05_omega0.0001_nu1_mu2_M1000.dat};
\end{axis}

\end{tikzpicture}
	\subcaptionbox*{$\beta = 5$, $\mu = 1$, $\scalprod{\phi,\udag} = 0.285843$}[0.49\textwidth][l]{\begin{tikzpicture}[baseline]

\begin{axis}[%
	width=\fwidth,
	height=\fheight,
	scale only axis,
	xmode=log,
	xmin=\sigmamindeconv,
	xmax=\sigmamaxdeconv,
	xminorticks=true,
	ymode=log,
	ymin=1e1,
	ymax=1e14,
]
\addplot[color=myyellow] table [x index=0, y index=5] {data/power_prob1_a2_phi5_alpha0.05_omega0.0001_nu1_mu1_unreg.dat};
\addplot[color=mylightblue, name path=A] table [x index=0, y index=4] {data/power_prob1_a2_phi5_alpha0.05_omega0.0001_nu1_mu1_M1000.dat};
\addplot[color=mylightblue, name path=B] table [x index=0, y index=5] {data/power_prob1_a2_phi5_alpha0.05_omega0.0001_nu1_mu1_M1000.dat};
\addplot[color=mylightblue] fill between [of=A and B];
\addplot[color=myblue] table [x index=0, y index=3] {data/power_prob1_a2_phi5_alpha0.05_omega0.0001_nu1_mu1_M1000.dat};
\end{axis}

\end{tikzpicture}
	\subcaptionbox*{$\beta = 1$, $\mu = 2$, $\scalprod{\phi,\udag} = 0.629367$}[0.49\textwidth][l]{\begin{tikzpicture}[baseline]

\begin{axis}[%
	width=\fwidth,
	height=\fheight,
	scale only axis,
	xmode=log,
	xmin=\sigmamindeconv,
	xmax=\sigmamaxdeconv,
	xminorticks=true,
	ymode=log,
	ymin=1e1,
	ymax=1e14,
]
\addplot[color=myyellow] table [x index=0, y index=5] {data/power_prob1_a2_phi1_alpha0.05_omega0.0001_nu1_mu2_unreg.dat};
\addplot[color=mylightblue, name path=A] table [x index=0, y index=4] {data/power_prob1_a2_phi1_alpha0.05_omega0.0001_nu1_mu2_M1000.dat};
\addplot[color=mylightblue, name path=B] table [x index=0, y index=5] {data/power_prob1_a2_phi1_alpha0.05_omega0.0001_nu1_mu2_M1000.dat};
\addplot[color=mylightblue] fill between [of=A and B];
\addplot[color=myblue] table [x index=0, y index=3] {data/power_prob1_a2_phi1_alpha0.05_omega0.0001_nu1_mu2_M1000.dat};
\end{axis}

\end{tikzpicture}
	\subcaptionbox*{$\beta = 1$, $\mu = 1$, $\scalprod{\phi,\udag} = 0.629367$}[0.49\textwidth][l]{\begin{tikzpicture}[baseline]

\begin{axis}[%
	width=\fwidth,
	height=\fheight,
	scale only axis,
	xmode=log,
	xmin=\sigmamindeconv,
	xmax=\sigmamaxdeconv,
	xminorticks=true,
	ymode=log,
	ymin=1e1,
	ymax=1e14,
]
\addplot[color=myyellow] table [x index=0, y index=5] {data/power_prob1_a2_phi1_alpha0.05_omega0.0001_nu1_mu1_unreg.dat};
\addplot[color=mylightblue, name path=A] table [x index=0, y index=4] {data/power_prob1_a2_phi1_alpha0.05_omega0.0001_nu1_mu1_M1000.dat};
\addplot[color=mylightblue, name path=B] table [x index=0, y index=5] {data/power_prob1_a2_phi1_alpha0.05_omega0.0001_nu1_mu1_M1000.dat};
\addplot[color=mylightblue] fill between [of=A and B];
\addplot[color=myblue] table [x index=0, y index=3] {data/power_prob1_a2_phi1_alpha0.05_omega0.0001_nu1_mu1_M1000.dat};
\end{axis}

\end{tikzpicture}
	\caption{\rev{Choice of $\gamma_0$ by the oracle MAP test (\ref{plot:oracle_map}) as well as mean (\ref{plot:map_1s}), $16 \%$ and $84 \%$ quantiles (\ref{plot:gamma0_quantiles}) of the choice of $\gamma_0$ by the a posteriori MAP test for the deconvolution problem.}}
	\label{fig:gamma0_prob1}
\end{figure}
First of all, we note that the estimate for the level of the 1 sample MAP test with a posteriori choice of $\gamma$ stays below $\alpha_1 = 0.05$ for all considered noise levels and scenarios, and falls to $0$ as the noise level decreases. The latter suggests that the rejection threshold could be chosen more aggresively than in \eqref{eq:c_reg}.

The power of the 1 sample MAP test with a posteriori choice of $\gamma$ is very close to that of the oracle MAP test, and both have the highest power among all considered tests. For large noise levels, the a posteriori choice even outperforms the oracle choice. 
In the compatible scenario, the oracle MAP test with a priori choice of $\gamma$ has about the same power as the unregularized test.
Somewhat surprisingly, the power of the 1 sample MAP test with a posteriori choice of $\gamma$ exceeds that of the 2 sample MAP test with a posteriori choice of $\gamma$, which in turn exceeds that of the oracle MAP test with a priori choice of $\gamma$.
The oracle lower bound for the power of the MAP test with a priori choice of $\gamma$ appears to be very conservative, but still lies considerably above the power of the unregularized test in the incompatible scenario.

The choice of the exponent $\mu$ in the prior covariance has no noticable effect on the power of the resulting MAP tests.
In contrast, the choice of $\varphi$ (parameterized by $\beta)$ has a big influence on the unregularized test as to be expected. In the incompatible scenario ($\beta = 1$), this test does formally not exist (despite the possibility to compute it numerically), and consequently, its performance is the worst. 
We find that the power of all variants of the MAP test increases in the incompatible scenario. We emphasize that from a practical point of view the incompatible scenario is of higher relevance, as it does not weight contributions inside the interval $[c,c+l]$ of interest differently. This becomes e.g. visible by a larger value $\langle \udag,\varphi\rangle$ of the feature of interest, which leads to a (slightly) improved power of the different MAP tests. 

\rev{The mean a posteriori choice of $\gamma_0$ for the MAP test converges quite rapidly toward the optimal oracle choice of $\gamma_0$ as the noise level decreases. For large noise levels, surprisingly, very large values are chosen for $\gamma_0$, corresponding to a very spread out prior. The distribution of the a posteriori choice of $\gamma_0$ is more concentrated than the mean suggests, indicating the existence of outliers with very large values, and it contracts notably as the noise level decreases. Overall, smaller values are chosen for $\gamma_0$ in case of $\mu = 1$ than in case of $\mu = 2$.}

\subsubsection{Differentiation}

The results for the differentiation problem are depicted in \cref{fig:prob2,fig:gamma0_prob2}, respectively.
\begin{figure}[!htb]
	\centering
	\subcaptionbox*{$\beta = 3$, $\mu = 2$, $\scalprod{\phi,\udag} = 0.473619$}[0.49\textwidth][l]{\begin{tikzpicture}[baseline]

\begin{axis}[%
	width=\fwidth,
	height=\fheight,
	scale only axis,
	xmode=log,
	xmin=\sigmamindiff,
	xmax=\sigmamaxdiff,
	xminorticks=true,
	ymin=0,
	ymax=1.1,
]
\addplot[color=myred] table [x index=0, y index=1] {data/power_prob2_a2_phi3_alpha0.05_omega1e-10_nu1_mu2_unreg.dat}; 
\addplot[color=myyellow] table [x index=0, y index=2] {data/power_prob2_a2_phi3_alpha0.05_omega1e-10_nu1_mu2_unreg.dat};
\addplot[color=mygreen] table [x index=0, y index=3] {data/power_prob2_a2_phi3_alpha0.05_omega1e-10_nu1_mu2_unreg.dat}; 
\addplot[color=mygreen,densely dashed] table [x index=0, y index=4] {data/power_prob2_a2_phi3_alpha0.05_omega1e-10_nu1_mu2_unreg.dat}; 
\addplot[color=myblue,densely dashed] table [x index=0, y index=1] {data/power_prob2_a2_phi3_alpha0.05_omega1e-10_nu1_mu2_M1000.dat};
\addplot[color=myblue] table [x index=0, y index=2] {data/power_prob2_a2_phi3_alpha0.05_omega1e-10_nu1_mu2_M1000.dat};
\addplot[color=myblue,densely dotted] table [x index=0, y index=1] {data/level_prob2_a2_phi3_alpha0.05_omega1e-10_nu1_mu2_M500_N100.dat};

\end{axis}

\end{tikzpicture}
	\subcaptionbox*{$\beta = 3$, $\mu = 1$, $\scalprod{\phi,\udag} = 0.473619$}[0.49\textwidth][l]{\begin{tikzpicture}[baseline]

\begin{axis}[%
	width=\fwidth,
	height=\fheight,
	scale only axis,
	xmode=log,
	xmin=\sigmamindiff,
	xmax=\sigmamaxdiff,
	xminorticks=true,
	ymin=0,
	ymax=1.1,
]
\addplot[color=myred] table [x index=0, y index=1] {data/power_prob2_a2_phi3_alpha0.05_omega1e-10_nu1_mu1_unreg.dat}; 
\addplot[color=myyellow] table [x index=0, y index=2] {data/power_prob2_a2_phi3_alpha0.05_omega1e-10_nu1_mu1_unreg.dat};
\addplot[color=mygreen] table [x index=0, y index=3] {data/power_prob2_a2_phi3_alpha0.05_omega1e-10_nu1_mu1_unreg.dat}; 
\addplot[color=mygreen,densely dashed] table [x index=0, y index=4] {data/power_prob2_a2_phi3_alpha0.05_omega1e-10_nu1_mu1_unreg.dat}; 
\addplot[color=myblue,densely dashed] table [x index=0, y index=1] {data/power_prob2_a2_phi3_alpha0.05_omega1e-10_nu1_mu1_M1000.dat};
\addplot[color=myblue] table [x index=0, y index=2] {data/power_prob2_a2_phi3_alpha0.05_omega1e-10_nu1_mu1_M1000.dat};
\addplot[color=myblue,densely dotted] table [x index=0, y index=1] {data/level_prob2_a2_phi3_alpha0.05_omega1e-10_nu1_mu1_M500_N100.dat};

\end{axis}

\end{tikzpicture}
	\subcaptionbox*{$\beta = 1$, $\mu = 2$, $\scalprod{\phi,\udag} = 0.655476$}[0.49\textwidth][l]{\begin{tikzpicture}[baseline]

\begin{axis}[%
	width=\fwidth,
	height=\fheight,
	scale only axis,
	xmode=log,
	xmin=\sigmamindiff,
	xmax=\sigmamaxdiff,
	xminorticks=true,
	ymin=0,
	ymax=1.1,
]
\addplot[color=myred] table [x index=0, y index=1] {data/power_prob2_a2_phi1_alpha0.05_omega1e-10_nu1_mu2_unreg.dat}; 
\addplot[color=myyellow] table [x index=0, y index=2] {data/power_prob2_a2_phi1_alpha0.05_omega1e-10_nu1_mu2_unreg.dat};
\addplot[color=mygreen] table [x index=0, y index=3] {data/power_prob2_a2_phi1_alpha0.05_omega1e-10_nu1_mu2_unreg.dat}; 
\addplot[color=mygreen,densely dashed] table [x index=0, y index=4] {data/power_prob2_a2_phi1_alpha0.05_omega1e-10_nu1_mu2_unreg.dat}; 
\addplot[color=myblue,densely dashed] table [x index=0, y index=1] {data/power_prob2_a2_phi1_alpha0.05_omega1e-10_nu1_mu2_M1000.dat};
\addplot[color=myblue] table [x index=0, y index=2] {data/power_prob2_a2_phi1_alpha0.05_omega1e-10_nu1_mu2_M1000.dat};
\addplot[color=myblue,densely dotted] table [x index=0, y index=1] {data/level_prob2_a2_phi1_alpha0.05_omega1e-10_nu1_mu2_M500_N100.dat};

\end{axis}

\end{tikzpicture}
	\subcaptionbox*{$\beta = 1$, $\mu = 1$, $\scalprod{\phi,\udag} = 0.655476$}[0.49\textwidth][l]{\begin{tikzpicture}[baseline]

\begin{axis}[%
	width=\fwidth,
	height=\fheight,
	scale only axis,
	xmode=log,
	xmin=\sigmamindiff,
	xmax=\sigmamaxdiff,
	xminorticks=true,
	ymin=0,
	ymax=1.1,
]
\addplot[color=myred] table [x index=0, y index=1] {data/power_prob2_a2_phi1_alpha0.05_omega1e-10_nu1_mu1_unreg.dat}; 
\addplot[color=myyellow] table [x index=0, y index=2] {data/power_prob2_a2_phi1_alpha0.05_omega1e-10_nu1_mu1_unreg.dat};
\addplot[color=mygreen] table [x index=0, y index=3] {data/power_prob2_a2_phi1_alpha0.05_omega1e-10_nu1_mu1_unreg.dat}; 
\addplot[color=mygreen,densely dashed] table [x index=0, y index=4] {data/power_prob2_a2_phi1_alpha0.05_omega1e-10_nu1_mu1_unreg.dat}; 
\addplot[color=myblue,densely dashed] table [x index=0, y index=1] {data/power_prob2_a2_phi1_alpha0.05_omega1e-10_nu1_mu1_M1000.dat};
\addplot[color=myblue] table [x index=0, y index=2] {data/power_prob2_a2_phi1_alpha0.05_omega1e-10_nu1_mu1_M1000.dat};
\addplot[color=myblue,densely dotted] table [x index=0, y index=1] {data/level_prob2_a2_phi1_alpha0.05_omega1e-10_nu1_mu1_M500_N100.dat};

\end{axis}

\end{tikzpicture}
	\caption{Exact powers of the unregularized test (\ref{plot:unreg}), the oracle MAP test (\ref{plot:oracle_map}), and the oracle a priori MAP test (\ref{plot:oracle_pri_map}), the oracle lower bound for the power of the a priori MAP test (\ref{plot:est_pri_map}), the empirical power of the $2$ sample MAP test (\ref{plot:map_2s}), and the empirical power (\ref{plot:map_1s}) and level (\ref{plot:level_map_1s}) of the $1$ sample MAP test for the differentiation problem with different values of $\beta$ and $\mu$.}
	\label{fig:prob2}
\end{figure}
%
\begin{figure}[!htb]
	\centering
	\subcaptionbox*{$\beta = 3$, $\mu = 2$, $\scalprod{\phi,\udag} = 0.473619$}[0.49\textwidth][l]{\begin{tikzpicture}[baseline]

\begin{axis}[%
	width=\fwidth,
	height=\fheight,
	scale only axis,
	xmode=log,
	xmin=\sigmamindiff,
	xmax=\sigmamaxdiff,
	xminorticks=true,
	ymode=log,
	ymin=1e6,
	ymax=1e22,
]
\addplot[color=myyellow] table [x index=0, y index=5] {data/power_prob2_a2_phi3_alpha0.05_omega1e-10_nu1_mu2_unreg.dat};
\addplot[color=mylightblue, name path=A] table [x index=0, y index=4] {data/power_prob2_a2_phi3_alpha0.05_omega1e-10_nu1_mu2_M1000.dat};
\addplot[color=mylightblue, name path=B] table [x index=0, y index=5] {data/power_prob2_a2_phi3_alpha0.05_omega1e-10_nu1_mu2_M1000.dat};
\addplot[color=mylightblue] fill between [of=A and B];
\addplot[color=myblue] table [x index=0, y index=3] {data/power_prob2_a2_phi3_alpha0.05_omega1e-10_nu1_mu2_M1000.dat};
\end{axis}

\end{tikzpicture}
	\subcaptionbox*{$\beta = 3$, $\mu = 1$, $\scalprod{\phi,\udag} = 0.473619$}[0.49\textwidth][l]{\begin{tikzpicture}[baseline]

\begin{axis}[%
	width=\fwidth,
	height=\fheight,
	scale only axis,
	xmode=log,
	xmin=\sigmamindiff,
	xmax=\sigmamaxdiff,
	xminorticks=true,
	ymode=log,
	ymin=1e6,
	ymax=1e22,
]
\addplot[color=myyellow] table [x index=0, y index=5] {data/power_prob2_a2_phi3_alpha0.05_omega1e-10_nu1_mu1_unreg.dat};
\addplot[color=mylightblue, name path=A] table [x index=0, y index=4] {data/power_prob2_a2_phi3_alpha0.05_omega1e-10_nu1_mu1_M1000.dat};
\addplot[color=mylightblue, name path=B] table [x index=0, y index=5] {data/power_prob2_a2_phi3_alpha0.05_omega1e-10_nu1_mu1_M1000.dat};
\addplot[color=mylightblue] fill between [of=A and B];
\addplot[color=myblue] table [x index=0, y index=3] {data/power_prob2_a2_phi3_alpha0.05_omega1e-10_nu1_mu1_M1000.dat};
\end{axis}

\end{tikzpicture}
	\subcaptionbox*{$\beta = 1$, $\mu = 2$, $\scalprod{\phi,\udag} = 0.655476$}[0.49\textwidth][l]{\begin{tikzpicture}[baseline]

\begin{axis}[%
	width=\fwidth,
	height=\fheight,
	scale only axis,
	xmode=log,
	xmin=\sigmamindiff,
	xmax=\sigmamaxdiff,
	xminorticks=true,
	ymode=log,
	ymin=1e6,
	ymax=1e22,
]
\addplot[color=myyellow] table [x index=0, y index=5] {data/power_prob2_a2_phi1_alpha0.05_omega1e-10_nu1_mu2_unreg.dat};
\addplot[color=mylightblue, name path=A] table [x index=0, y index=4] {data/power_prob2_a2_phi1_alpha0.05_omega1e-10_nu1_mu2_M1000.dat};
\addplot[color=mylightblue, name path=B] table [x index=0, y index=5] {data/power_prob2_a2_phi1_alpha0.05_omega1e-10_nu1_mu2_M1000.dat};
\addplot[color=mylightblue] fill between [of=A and B];
\addplot[color=myblue] table [x index=0, y index=3] {data/power_prob2_a2_phi1_alpha0.05_omega1e-10_nu1_mu2_M1000.dat};
\end{axis}

\end{tikzpicture}
	\subcaptionbox*{$\beta = 1$, $\mu = 1$, $\scalprod{\phi,\udag} = 0.655476$}[0.49\textwidth][l]{\begin{tikzpicture}[baseline]

\begin{axis}[%
	width=\fwidth,
	height=\fheight,
	scale only axis,
	xmode=log,
	xmin=\sigmamindiff,
	xmax=\sigmamaxdiff,
	xminorticks=true,
	ymode=log,
	ymin=1e6,
	ymax=1e22,
]
\addplot[color=myyellow] table [x index=0, y index=5] {data/power_prob2_a2_phi1_alpha0.05_omega1e-10_nu1_mu1_unreg.dat};
\addplot[color=mylightblue, name path=A] table [x index=0, y index=4] {data/power_prob2_a2_phi1_alpha0.05_omega1e-10_nu1_mu1_M1000.dat};
\addplot[color=mylightblue, name path=B] table [x index=0, y index=5] {data/power_prob2_a2_phi1_alpha0.05_omega1e-10_nu1_mu1_M1000.dat};
\addplot[color=mylightblue] fill between [of=A and B];
\addplot[color=myblue] table [x index=0, y index=3] {data/power_prob2_a2_phi1_alpha0.05_omega1e-10_nu1_mu1_M1000.dat};
\end{axis}

\end{tikzpicture}
	\caption{\rev{Choice of $\gamma_0$ by the oracle MAP test (\ref{plot:oracle_map}) as well as mean (\ref{plot:map_1s}), $16 \%$ and $84 \%$ quantiles (\ref{plot:gamma0_quantiles}) of the choice of $\gamma_0$ by the a posteriori MAP test for the differentiation problem.}}
	\label{fig:gamma0_prob2}
\end{figure}
The findings are somewhat similar to the deconvolution problem: The unregularized test performs well in the compatible scenario ($\beta = 3$), and nearly fails in the incompatible scenario ($\beta = 1$). The different choice of the exponent $\mu$ in the prior covariance has no visible effect on the results. For this problem, the 1 sample MAP test does not clearly outperform the oracle MAP test, but its performance is still surprisingly good despite the quite fast convergences of its level to $0$. 
\rev{A difference to the convolution problem becomes visible in the choice of $\gamma_0$. First of all, the absolute values are considerably larger for the differentiation problem --- reflecting the required smaller choice of $\omega$ --- and secondly, the values chosen a posteriori converge only slowly to the oracle value. }

\subsubsection{Backward heat equation} 

Finally, the results for the backward heat equation are depicted in \cref{fig:prob3,fig:gamma0_prob3}.
\begin{figure}[!htb]
	\centering
	\subcaptionbox*{$\beta = 1$, $\mu = 2$, $\scalprod{\phi,\udag} = 0.643260$}[0.49\textwidth][l]{\begin{tikzpicture}[baseline]

\begin{axis}[%
	width=\fwidth,
	height=\fheight,
	scale only axis,
	xmode=log,
	xmin=\sigmaminheat,
	xmax=\sigmamaxheat,
	xminorticks=true,
	ymin=0,
	ymax=1.1,
]
\addplot[color=myred] table [x index=0, y index=1] {data/power_prob3_a0.0001_phi1_alpha0.05_omega0.0001_nu1_mu2_unreg.dat}; 
\addplot[color=myyellow] table [x index=0, y index=2] {data/power_prob3_a0.0001_phi1_alpha0.05_omega0.0001_nu1_mu2_unreg.dat};
\addplot[color=mygreen] table [x index=0, y index=3] {data/power_prob3_a0.0001_phi1_alpha0.05_omega0.0001_nu1_mu2_unreg.dat}; 
\addplot[color=mygreen,densely dashed] table [x index=0, y index=4] {data/power_prob3_a0.0001_phi1_alpha0.05_omega0.0001_nu1_mu2_unreg.dat}; 
\addplot[color=myblue,densely dashed] table [x index=0, y index=1] {data/power_prob3_a0.0001_phi1_alpha0.05_omega0.0001_nu1_mu2_M1000.dat};
\addplot[color=myblue] table [x index=0, y index=2] {data/power_prob3_a0.0001_phi1_alpha0.05_omega0.0001_nu1_mu2_M1000.dat};
\addplot[color=myblue,densely dotted] table [x index=0, y index=1] {data/level_prob3_a0.0001_phi1_alpha0.05_omega0.0001_nu1_mu2_M500_N100.dat};

\end{axis}

\end{tikzpicture}
	\subcaptionbox*{$\beta = 1$, $\mu = 1$, $\scalprod{\phi,\udag} = 0.643260$}[0.49\textwidth][l]{\begin{tikzpicture}[baseline]

\begin{axis}[%
	width=\fwidth,
	height=\fheight,
	scale only axis,
	xmode=log,
	xmin=\sigmaminheat,
	xmax=\sigmamaxheat,
	xminorticks=true,
	ymin=0,
	ymax=1.1,
]
\addplot[color=myred] table [x index=0, y index=1] {data/power_prob3_a0.0001_phi1_alpha0.05_omega0.0001_nu1_mu1_unreg.dat}; 
\addplot[color=myyellow] table [x index=0, y index=2] {data/power_prob3_a0.0001_phi1_alpha0.05_omega0.0001_nu1_mu1_unreg.dat};
\addplot[color=mygreen] table [x index=0, y index=3] {data/power_prob3_a0.0001_phi1_alpha0.05_omega0.0001_nu1_mu1_unreg.dat}; 
\addplot[color=mygreen,densely dashed] table [x index=0, y index=4] {data/power_prob3_a0.0001_phi1_alpha0.05_omega0.0001_nu1_mu1_unreg.dat}; 
\addplot[color=myblue,densely dashed] table [x index=0, y index=1] {data/power_prob3_a0.0001_phi1_alpha0.05_omega0.0001_nu1_mu1_M1000.dat};
\addplot[color=myblue] table [x index=0, y index=2] {data/power_prob3_a0.0001_phi1_alpha0.05_omega0.0001_nu1_mu1_M1000.dat};
\addplot[color=myblue,densely dotted] table [x index=0, y index=1] {data/level_prob3_a0.0001_phi1_alpha0.05_omega0.0001_nu1_mu1_M500_N100.dat};

\end{axis}

\end{tikzpicture}
	\caption{Exact powers of the unregularized test (\ref{plot:unreg}), the oracle MAP test (\ref{plot:oracle_map}), and the oracle a priori MAP test (\ref{plot:oracle_pri_map}), the oracle lower bound for the power of the a priori MAP test (\ref{plot:est_pri_map}), the empirical power of the $2$ sample MAP test (\ref{plot:map_2s}), and the empirical power (\ref{plot:map_1s}) and level (\ref{plot:level_map_1s}) of the $1$ sample MAP test for the backward heat equation with $\beta = 1$ and different values of $\mu$.}
	\label{fig:prob3}
\end{figure}
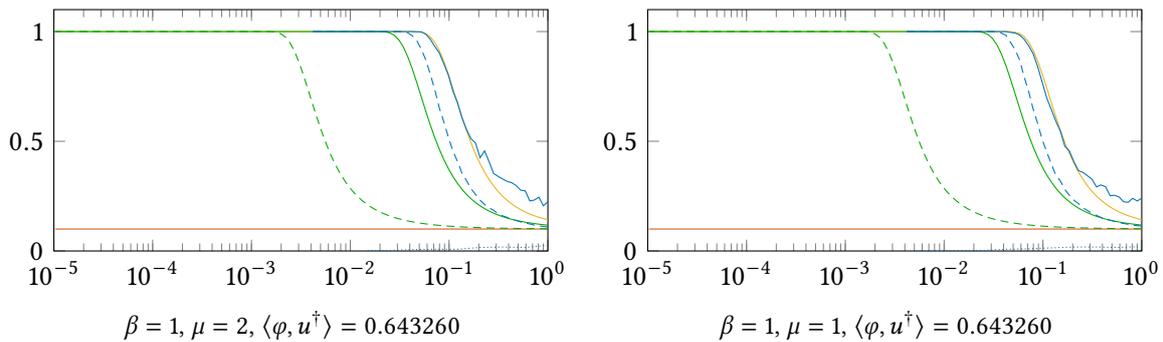
%
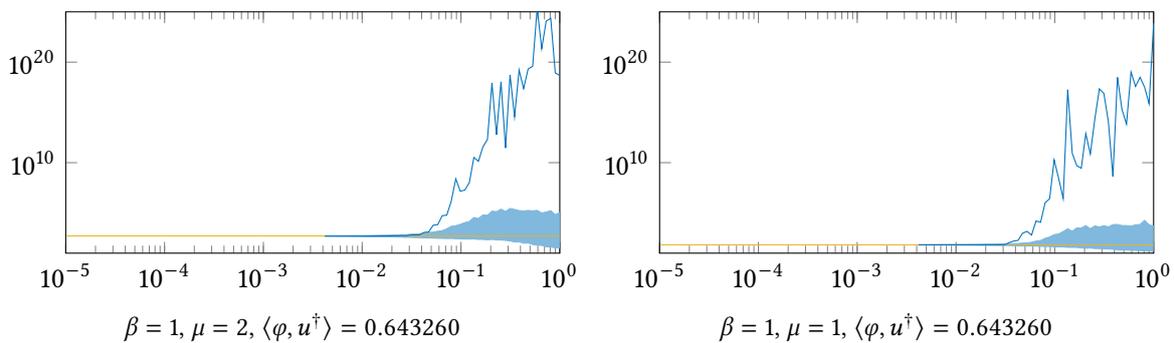
\begin{figure}[!htb]
	\centering
	\subcaptionbox*{$\beta = 1$, $\mu = 2$, $\scalprod{\phi,\udag} = 0.643260$}[0.49\textwidth][l]{\begin{tikzpicture}[baseline]

\begin{axis}[%
	width=\fwidth,
	height=\fheight,
	scale only axis,
	xmode=log,
	xmin=\sigmamindeconv,
	xmax=\sigmamaxdeconv,
	xminorticks=true,
	ymode=log,
	ymin=1e1,
	ymax=1e25,
]
\addplot[color=myyellow] table [x index=0, y index=5] {data/power_prob3_a0.0001_phi1_alpha0.05_omega0.0001_nu1_mu2_unreg.dat};
\addplot[color=mylightblue, name path=A] table [x index=0, y index=4] {data/power_prob3_a0.0001_phi1_alpha0.05_omega0.0001_nu1_mu2_M1000.dat};
\addplot[color=mylightblue, name path=B] table [x index=0, y index=5] {data/power_prob3_a0.0001_phi1_alpha0.05_omega0.0001_nu1_mu2_M1000.dat};
\addplot[color=mylightblue] fill between [of=A and B];
\addplot[color=myblue] table [x index=0, y index=3] {data/power_prob3_a0.0001_phi1_alpha0.05_omega0.0001_nu1_mu2_M1000.dat};
\end{axis}

\end{tikzpicture}
	\subcaptionbox*{$\beta = 1$, $\mu = 1$, $\scalprod{\phi,\udag} = 0.643260$}[0.49\textwidth][l]{\begin{tikzpicture}[baseline]

\begin{axis}[%
	width=\fwidth,
	height=\fheight,
	scale only axis,
	xmode=log,
	xmin=\sigmaminheat,
	xmax=\sigmamaxheat,
	xminorticks=true,
	ymode=log,
	ymin=1e1,
	ymax=1e25,
]
\addplot[color=myyellow] table [x index=0, y index=5] {data/power_prob3_a0.0001_phi1_alpha0.05_omega0.0001_nu1_mu1_unreg.dat};
\addplot[color=mylightblue, name path=A] table [x index=0, y index=4] {data/power_prob3_a0.0001_phi1_alpha0.05_omega0.0001_nu1_mu1_M1000.dat};
\addplot[color=mylightblue, name path=B] table [x index=0, y index=5] {data/power_prob3_a0.0001_phi1_alpha0.05_omega0.0001_nu1_mu1_M1000.dat};
\addplot[color=mylightblue] fill between [of=A and B];
\addplot[color=myblue] table [x index=0, y index=3] {data/power_prob3_a0.0001_phi1_alpha0.05_omega0.0001_nu1_mu1_M1000.dat};
\end{axis}

\end{tikzpicture}
	\caption{\rev{Choice of $\gamma_0$ by the oracle MAP test (\ref{plot:oracle_map}) as well as mean (\ref{plot:map_1s}), $16 \%$ and $84 \%$ quantiles (\ref{plot:gamma0_quantiles}) of the choice of $\gamma_0$ by the a posteriori MAP test for the backward heat equation.}}
	\label{fig:gamma0_prob3}
\end{figure}
Here, it is obvious that a numerical evaluation of the unregularized test fails due to the severe ill-posedness of the problem. Hence, the presented approach of MAP testing enables the investigation of local features for the first time. Similarly to the deconvolution and the differentation problem, the choice of the exponent $\mu$ in the prior covariance does not influence the power of the corresponding tests much. Furthermore, just as in the devonvolution problem, the 1 sample MAP test outperforms the others despite its low level. 
\rev{However, the chosen values of $\gamma_0$ depicted in Figure \ref{fig:gamma0_prob3} reveal some instabilities since the mean is always way larger than the $84\%$ quantile. This implies that there were several runs with extraordinarily large values of $\gamma_0$, but in view of Figure \ref{fig:prob3} those did not seriously influence the power.}


\section{Conclusion and outlook}

In this study, we have described how to test hypotheses of the form \eqref{test} in a Bayesian framework. The derived MAP test has been analyzed from a frequentist's point of view and was characterized as a regularized test in the sense of \cite{KWW:2023}. Furthermore, we have derived a fully data based test by an a posteriori choice of the Gaussian prior. This resolves to some extent the issue of the regularized tests in \cite{KWW:2023} requiring two sets of data. In our numerical simulations, we have finally shown that the investigated approach is widely applicable also to severely ill-posed problems such as the backward heat equation, where no localized testing (in the sense of features $\phi$ with bounded support) has been possible before. One finding is that the 1 sample MAP test seems to perform well or even best in all considered problems and scenarios, and at the same time resolves the two sample issue of the a posteriori test considered in \cite{KWW:2023}.

Future research should be devoted to a better understanding of this surprisingly good performance of the 1 sample MAP test and to a theoretical foundation of its properties. Furthermore, the results shown in our simulations raise the question which feature $\varphi$ should be chosen for a fixed real-world hypothesis (such as \textit{support intersects with $[c,c+l]$}). Since the investigated MAP tests seem to be quite stable w.r.t. different features $\varphi$, it seems interesting to quantify which one yields best results in terms of power. Note that this it not fully possible for the previously used unregularized test, since it exists only for specific features $\varphi$.

Future research should also be devoted to non-Gaussian priors and other a posteriori choices besides the considered one.

\section*{Acknowledgments} 

The authors are supported by the German Research Foundation (DFG) under the grant WE 6204/2-1.
The research of RK has been partially funded by the German Research Foundation (DFG) under project number 318763901 --- SFB 1294.


\bibliography{references}


\appendix

\section{Proofs from Section \ref{sec:map_ssc}}
\label{sec:proofs_map_ssc}

\begin{proof}[Proof of \cref{trace_condition}]
By the $0$-$1$ law \cite[Thm.~2.1.13]{Gine_Nickl:2015}, $\Pi(\V)$ is either $0$ or $1$.
Let $(e_k)_{k\in\N}$ be a system of eigenvectors of $T^*T$ and $(\tau_k^2)_{k\in\N}$ the correspondig eigenvalues.
For $U \sim \Pi$, we have
\[ U = \sum_{k=1}^\infty U_k e_k + m_0 \]
where $U_k \sim \Normal(0,\gamma^2\tau_k^2)$ for all $k \in \N$.
Let us assume that $(T^*T)^{\mu - \nu}$ is trace class and first consider the case that $m_0 = 0$.
Here, we have
\[ \norm{U}_\V^2 = \frac{1}{\rho^2}\norm{(T^*T)^{-\frac{\nu}{2}}U}_\X^2 = \sum_{k=1}^{\infty} \rho^{-2}\tau_k^{-2\nu} U_k^2. \]
Without loss of generality, we consider the case $\gamma = \rho = 1$.
We prove that $\norm{U}_\V$ is almost surely finite for $U \sim \Pi$ using Kolmogorov's three series theorem \cite[Thm.~15.51]{Klenke:2020} with 
\[ A > 0, \quad X_k := \tau_k^{-2\nu} U_k^2, \quad\text{and}\quad Y_k := X_k \mathbf{1}_{\abs{X_k} \le A^2} \quad \text{for all}~k \in \N. \]
First of all, we have
\begin{align*}
	\Prob{\abs{X_k} > A^2} &= \Prob{\tau_k^{-2\nu}U_k^2 > A^2}
	= 2 \Prob{U_k > A\tau_k^\nu}
	= \erfc \left(\frac{A}{\sqrt{2}}\tau_k^{\nu - \mu}\right) \\
	&< \frac{2}{\sqrt{\pi}}\cdot\frac{1}{1 + \frac{A}{\sqrt{2}}\tau_k^{\nu - \mu}} \exp \left(- \frac{A^2}{2}\tau_k^{2\nu - 2\mu}\right)
	\le \frac{2}{\sqrt{\pi}}\exp \left(- \frac{A^2}{2}\tau_k^{2\nu - 2\mu}\right).
\end{align*}
Here, $\erfc$ denotes the complementary error function
\[ \erfc(x) = \frac{2}{\sqrt{\pi}} \int_x^\infty e^{-t} \di t \]
and we used the estimate $M(x) < \frac{1}{x + 1}$, $x \ge 0$, for Mills' ratio 
\[ M(x) = \frac{\int_x^\infty e^{-t^2} \di t}{e^{-x^2}} \]
from \cite{Laforgia_Sismondi:1988}.
Since $t e^{-t} \to 0$ as $t \to \infty$ and $\tau_k \to 0$ as $k \to \infty$, this implies
\[ \sum_{k=1}^\infty \Prob{\abs{X_k} > A^2} < C_1 \sum_{k=1}^\infty \tau_k^{2\mu - 2\nu} = C_1 \tr (T^*T)^{\mu - \nu} < \infty. \]
Second of all, we have
\[ 0 \le \sum_{k=1}^\infty \Exp{Y_k} \le \sum_{k=1}^\infty \Exp{X_k} = \sum_{k=1}^\infty \tau_k^{-2\nu} \Exp{U_k^2} = \sum_{k=1}^\infty \tau_k^{2\mu - 2\nu} = \tr (T^*T)^{\mu - \nu} < \infty. \]
Last of all, we have $\Exp{U_k^4} = 3\tau_k^{4\mu}$ (see, e.g., \cite[p.~148]{Papoulis_Pillai:2002}), which leads to
\begin{align*}
	\Var(Y_k) &\le \Var(X_k) = \Exp{\left(\tau_k^{-2\nu}U_k^2 - \Exp{\tau_k^{-2\nu}U_k^2}\right)^2} \\
	&= \tau_k^{-4\nu}\left(\Exp{U_k^4 - 2U_k^2\Exp{U_k^2} + \Exp{U_k^2}^2}\right) \\
	&= \tau_k^{-4\nu}\left(\Exp{U_k^4} - \Exp{U_k^2}^2\right)
	= \tau_k^{-4\nu} \left(3\tau_k^{4\mu} - \tau_k^{4\mu}\right) = 2 \tau_k^{4\mu - 4\nu}.
\end{align*}
This yields
\[ \sum_{k=1}^\infty \Var{Y_k} \le 2 \sum_{k=1}^\infty \left(\tau_k^{2\mu - 2\nu}\right)^2 \le C_2 \sum_{k=1}^\infty \tau_k^{2\mu - 2\nu} = C_2 \tr (T^*T)^{\mu - \nu} < \infty. \]
Now, $\norm{U}_\V$ is almost surely finite by Kolmogorov's three series theorem.

In the general case $m_0 \in \V$, it now follows that $\norm{U}_\V \le \norm{\sum_{k=1}^\infty U_k e_k}_\V + \norm{m_0}_\V$ is almost surely finite as well.
Now, we can interpret $\Pi$ as a probability measure on $\V$ which satisfies
\begin{align*}
	\int_\V \scalprod{h, x - m_0}_\V \scalprod{k, x - m_0}_\V \Pi(\di x)
	&= \frac{1}{\rho^4} \int_\X \scalprod{(T^*T)^{-\nu}h, x - m_0}_\X \scalprod{(T^*T)^{-\nu}k, x - m_0}_\X \Pi(\di x) \\
	&= \frac{\gamma^2}{\rho^4} \scalprod{(T^*T)^\mu (T^*T)^{-\nu}h, (T^*T)^{-\nu}k}_\X
	= \frac{\gamma^2}{\rho^2} \scalprod{(T^*T)^{\mu - \nu} h, k}_\V
\end{align*}
for all $h, k \in \V$. That is, its covariance w.r.t. the $\V$-norm is given by $\gamma^2\rho^{-2}(T^*T)^{\mu - \nu}$.

On the other hand, if we assume that $\Pi(\V) = 1$ in case that $(T^*T)^{\mu - \nu}$ is not trace class and that $m_0 \in \V$, the previous calculation resulted in a probability measure whose covariance operator is not trace class, which is not possible according to Proposition 1.8 in \cite{DaPrato:2006}. Therefore, $\Pi(\V) = 0$ in this case.

In the remaining case that $(T^*T)^{\mu - \nu}$ is trace class and that $m_0 \notin \V$, we have $\sum_{k=1}^\infty U_k e_k \in \V$ almost surely, which implies that $U = \sum_{k=1}^\infty U_k e_k + m_0 \notin \V$ almost surely.
\end{proof}

\begin{proof}[Proof of \cref{residual_Phi_estimate}]
We have
\begin{align*}
	T^*\Phimap - \phi &= T^*T C_0^\frac12 \left(C_0^\frac12 T^*T C_0^\frac12 + \sigma^2 \Id\right)^{-1} C_0^\frac12 \phi - \phi \\
	&= \left[C_0^\frac12 T^*T C_0^\frac12 \left(C_0^\frac12 T^*T C_0^\frac12 + \sigma^2 \Id\right)^{-1} - \Id\right] \phi \\
	&= -\sigma^2 \left(C_0^\frac12 T^*T C_0^\frac12 + \sigma^2 \Id\right)^{-1} \phi
	= -\left(\Id + \frac{1}{\sigma^2} C_0^\frac12 T^*T C_0^\frac12\right)^{-1} \phi,
\end{align*}
and thus
\begin{align*}
	\bignorm{\left(T^*T\right)^\frac{\nu}{2} \left(T^*\Phimap - \phi\right)}_\X
	&= \bignorm{\left(T^*T\right)^\frac{\nu}{2} \left(\Id + \frac{\gamma^2}{\sigma^2}\left(T^*T\right)^{1 + \mu}\right)^{-1} \phi}_\X \\
	&= \norm{f(T^*T)\phi}_\X \le \norm{f(T^*T)}_{\X \to \X} \norm{\phi}_\X,
\end{align*}
where
\[ f(\lambda) := \frac{\lambda^\frac{\nu}{2}}{1 + \frac{\gamma^2}{\sigma^2}\lambda^{1 + \mu}} \quad \text{for all}~\lambda \ge 0. \]
Moreover, we have
\begin{align*}
	\norm{\Phimap}_\Y &= \bignorm{T\left(T^*T + \sigma^2 C_0^{-1}\right)^{-1}\phi}_\Y
	= \bignorm{(T^*T)^\frac12\frac{1}{\sigma^2}C_0\left(\Id + \frac{1}{\sigma^2}T^*TC_0\right)^{-1}\phi}_\X \\
	&= \bignorm{\frac{\gamma^2}{\sigma^2}(T^*T)^{\frac12 + \mu}\left(\Id + \frac{\gamma^2}{\sigma^2}(T^*T)^{1 + \mu}\right)^{-1}\phi}_\X
	= \norm{h(T^*T)\phi}_\X \le \norm{h(T^*T)}_{\X \to \X} \norm{\phi}_\X
\end{align*}
where
\[ h(\lambda) := \frac{\frac{\gamma^2}{\sigma^2}\lambda^{\frac12 + \mu}}{1 + \frac{\gamma^2}{\sigma^2}\lambda^{1 + \mu}} \quad \text{for all}~\lambda \ge 0. \]
Now, $\norm{f(T^*T)} = \sup_{\lambda \in [0,\kappa]} f(\lambda)$ and $\norm{h(T^*T)} = \sup_{\lambda \in [0,\kappa]} h(\lambda)$, where $\kappa := \norm{T^*T} = \norm{T}^2$.

We find the maximum of $f$ on $[0,\kappa]$ using the derivative
\begin{align*}
	f'(\lambda) &= \frac{\frac{\nu}{2}\lambda^{\frac{\nu}{2} - 1} \left(1 + \frac{\gamma^2}{\sigma^2}\lambda^{1 + \mu}\right) - \lambda^{\frac{\nu}{2}} \frac{\gamma^2}{\sigma^2}(1 + \mu)\lambda^\mu}{\left(\frac{\gamma^2}{\sigma^2}(1 + \mu)\lambda^\mu\right)^2} \\
	&= \left(\frac{\sigma^2}{\gamma^2(1 + \mu)}\right)^2 \lambda^{\frac{\nu}{2} - 1 - 2\mu} \left(\frac{\nu}{2} + \frac{\gamma^2}{\sigma^2}\left(\frac{\nu}{2} - 1 - \mu\right)\lambda^{\mu + 1}\right)
\end{align*}
for $\lambda > 0$. We have
\[ \lim_{\lambda \to 0} f'(\lambda) = \begin{cases}
	0 & \text{if}~\frac{\nu}{2} - 1 > 2\mu, \\
	\frac{\nu}{2}\left(\frac{\sigma^2}{\gamma^2(1 + \mu)}\right)^2 & \text{if}~\frac{\nu}{2} - 1 = 2\mu, \\
	\infty & \text{if}~\frac{\nu}{2} - 1 < 2\mu.
\end{cases} \]
If $\frac{\nu}{2} - 1 \ge \mu$, then $f'(\lambda) > 0$ for all $\lambda > 0$ and $f$ attains its maximum on $[0,\kappa]$ in $\kappa$.
If, on the other hand, $\frac{\nu}{2} - 1 < \mu$, then $f'(\lambda) = 0$ if and only if
\[ \lambda = \left(\frac{\frac{\nu}{2}\sigma^2}{\gamma^2\left(1 + \mu - \frac{\nu}{2}\right)}\right)^{\frac{1}{1 + \mu}} =: \lambda_0(\sigma,\nu,\mu,\gamma), \]
so that $f$ attains its maximum on $[0,\kappa]$ in $\min \{\lambda_0, \kappa\}$.
Then we have
\begin{align*}
	f(\lambda_0) &= \left(\frac{\nu\sigma^2}{2\gamma^2\left(1 + \mu - \frac{\nu}{2}\right)}\right)^{\frac{\nu}{2(1 + \mu)}} \left(1 + \frac{\nu}{2\left(1 + \mu - \frac{\nu}{2}\right)}\right)^{-1} \\
	&= \left(\frac{\gamma^2}{\sigma^2}\left(\frac{2(1 + \mu)}{\nu} - 1\right)\right)^{-\frac{\nu}{2(1 + \mu)}} \left(1 - \frac{\nu}{2(1 + \mu)}\right)
	= g\left(\frac{\nu}{2(1 + \mu)}\right),
\end{align*}
where
\[ g(t) := \left(\frac{\sigma^2 t}{\gamma^2(1 - t)}\right)^t (1 - t) = \left(\frac{\sigma}{\gamma}\right)^{2t} t^t \left(1 - t\right)^{1 - t}. \]
Note that $\frac12 \le t^t(1 - t)^{1 - t} \le 1$ for all $t \in [0,1]$.
For $\mu > \frac{\nu}{2} - 1$ this yields the estimate
\begin{equation*}
	\norm{f(T^*T)} = \sup_{\lambda \in [0,\kappa]} f(\lambda) \le \sup_{\lambda \ge 0} f(\lambda)
	= f(\lambda_0) = g\left(\frac{\nu}{2(1 + \mu)}\right) \le \left(\frac{\sigma}{\gamma}\right)^{\frac{\nu}{1 + \mu}}.
\end{equation*}

Similarly, we find the maximum of $h$ on $[0,\kappa]$ using the derivative
\begin{align*}
	h'(\lambda) &= \frac{\frac{\gamma^2}{\sigma^2}\left(\frac12 + \mu\right)\lambda^{-\frac12 + \mu}\left(1 + \frac{\gamma^2}{\sigma^2}\lambda^{1 + \mu}\right) - \frac{\gamma^2}{\sigma^2}\lambda^{\frac12 + \mu}\frac{\gamma^2}{\sigma^2}(1 + \mu)\lambda^\mu}{\left(\frac{\gamma^2}{\sigma^2}(1 + \mu)\right)^2\lambda^{2\mu}} \\
	&= \frac{\sigma^2\left(\frac12 + \mu\right)}{\gamma^2\left(1 + \mu\right)^2} \lambda^{-\frac12 - \mu} \left(1 - \frac{\gamma^2}{\sigma^2(1 + 2\mu)}\lambda^{1 + \mu}\right).
\end{align*}
Since $\mu > 0$, we have $\lim_{\lambda \to 0} h'(\lambda) = \infty$.
By the choice of $\gamma$, we have $h'(\lambda) = 0$ if and only if
\[ \lambda = \left(\frac{\sigma^2(1 + 2\mu)}{\gamma^2}\right)^\frac{1}{1 + \mu} =: \lambda_1(\sigma,\mu,\gamma) \]
Now, we have
\begin{align*}
	h(\lambda_1) &= \frac{\gamma^2}{\sigma^2} \left(\frac{\sigma^2(1 + 2\mu)}{\gamma^2}\right)^\frac{1 + 2\mu}{2 + 2\mu} \left(1 + (1 + 2\mu)\right)^{-1} \\
	&= \frac{\gamma^2}{\sigma^2} \left(\frac{\sigma^2(1 + 2\mu)}{\gamma^2}\right)^\frac{1 + 2\mu}{2 + 2\mu} \left(\frac{1}{2 + 2\mu}\right)^{\frac{1 + 2\mu}{2 + 2\mu} + \frac{1}{2 + 2\mu}} \\
	&= \frac{\gamma^2}{\sigma^2} \left(\frac{\sigma^2(1 + 2\mu)}{\gamma^2(2 + 2\mu)}\right)^\frac{1 + 2\mu}{2 + 2\mu} \left(1 - \frac{1 + 2\mu}{2 + 2\mu}\right)^{1 - \frac{1 + 2\mu}{2 + 2\mu}} \\
	&= \frac{\gamma^2}{\sigma^2} g\left(\frac{1 + 2\mu}{2 + 2\mu}\right) 
	= \frac{\gamma^2}{\sigma^2} g\left(1 - \frac{1}{2(1 + \mu)}\right).
\end{align*}
This results in the estimate
\begin{equation*}
	\norm{h(T^*T)} = \sup_{\lambda \in [0,\kappa]} h(\lambda) \le \sup_{\lambda \ge 0} h(\lambda) = h(\lambda_1) 
	= \left(\frac{\gamma}{\sigma}\right)^2 g\left(1 - \frac{1}{2(1 + \mu)}\right) \le \left(\frac{\gamma}{\sigma}\right)^{\frac{1}{1 + \mu}}. \qedhere
\end{equation*}
\end{proof}

\section{Properties of periodic convolution operator}

\begin{theorem}
	\label{norm_conv_op}
	The operator $T_{\mathrm{conv},P}$: $L^1(-1,1) \to L^1(-1,1)$ as defined in \cref{sec:deconv} is bounded with $\norm{T_{\mathrm{conv},P}}_{L^2 \to L^2} = 1$.
\end{theorem}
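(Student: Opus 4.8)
The plan is to combine Young's inequality for periodic convolution with the observation that the constant function $\mathbf 1$ is a fixed point of $T_{\mathrm{conv},P}$. The two inputs I need about the periodized kernel $\tilde h := h_{\text{per},P} \in L^1(-1,1)$ are that $\tilde h \ge 0$ almost everywhere and that $\norm{\tilde h}_{L^1(-1,1)} = 1$. The first holds because $(\Fourier h)(\xi) = (1 + 0.06^2\xi^2)^{-2} = \bigl((1 + 0.06^2\xi^2)^{-1}\bigr)^2$ is the square of the Fourier transform of a nonnegative even integrable function (a rescaled Laplace kernel), say $(1+0.06^2\xi^2)^{-1} = \Fourier g$ with $g \ge 0$; then $h = g * g \ge 0$, whence $\tilde h \ge 0$. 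The second holds by Tonelli's theorem, since $\norm{\tilde h}_{L^1(-1,1)} = \int_{-1}^1 \sum_{l\in\Z} h(x + lP)\,\di x = \int_\R h(y)\,\di y = (\Fourier h)(0) = 1$.

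With these facts the upper bound is immediate from Young's inequality on the torus: for every $\tilde u \in L^2(-1,1)$,
\[ \norm{T_{\mathrm{conv},P}\tilde u}_{L^2} = \norm{\tilde h *_P \tilde u}_{L^2} \le \norm{\tilde h}_{L^1}\,\norm{\tilde u}_{L^2} = \norm{\tilde u}_{L^2}, \]
so $\norm{T_{\mathrm{conv},P}}_{L^2 \to L^2} \le 1$; the same estimate with the $L^1$-norm in place of the $L^2$-norm shows that $T_{\mathrm{conv},P}$ is a well-defined bounded operator on $L^1(-1,1)$. For the matching lower bound I would test against $\mathbf 1 \equiv 1$: using the $P$-periodicity of $\tilde h$,
\[ (T_{\mathrm{conv},P}\mathbf 1)(x) = \int_{-1}^1 \tilde h(x - z)\,\di z = \int_{-1}^1 \tilde h(w)\,\di w = 1 \qquad \text{for a.e. } x, \]
so $\mathbf 1$ is an eigenvector of $T_{\mathrm{conv},P}$ with eigenvalue $1$, and hence $\norm{T_{\mathrm{conv},P}}_{L^2 \to L^2} \ge \norm{\mathbf 1}_{L^2}/\norm{\mathbf 1}_{L^2} = 1$. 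The two bounds together give $\norm{T_{\mathrm{conv},P}}_{L^2 \to L^2} = 1$.

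The only step that requires any care is the nonnegativity of $h$ (equivalently, the identity $\norm{\tilde h}_{L^1} = \int_\R h = (\Fourier h)(0)$), which is a one-line Fourier computation. If one prefers to avoid it, an alternative route is to diagonalize $T_{\mathrm{conv},P}$: in the orthonormal basis $\{2^{-1/2}e^{\pi i k\,\cdot}\}_{k\in\Z}$ of $L^2(-1,1)$ the operator acts, by the Poisson summation formula, as multiplication by $\lambda_k = (\Fourier h)(k/2) = (1 + 0.0009\,k^2)^{-2}$, and since $T_{\mathrm{conv},P}$ is self-adjoint (its kernel $\tilde h$ is real and even) one obtains $\norm{T_{\mathrm{conv},P}}_{L^2\to L^2} = \sup_{k\in\Z}\abs{\lambda_k} = \lambda_0 = 1$. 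I would present the Young-plus-fixed-point argument as the main proof, as it sidesteps any convergence questions in the Poisson identity.
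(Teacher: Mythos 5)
Your proof is correct, but it takes a genuinely different route from the paper. The paper diagonalizes $T_{\mathrm{conv},P}$ in the Fourier basis: via the periodic convolution theorem and Poisson summation it identifies the operator as the Fourier multiplier $k \mapsto (1 + 0.06^2 k^2/P^2)^{-2}$, and then reads off the norm from Parseval as the supremum of the multiplier, attained at $k = 0$ (i.e.\ on constants) --- essentially the ``alternative route'' you sketch at the end. Your main argument instead combines Young's inequality $\norm{\tilde h *_P \tilde u}_{L^2} \le \norm{\tilde h}_{L^1}\norm{\tilde u}_{L^2}$ with the two kernel facts $\tilde h \ge 0$ and $\int_{-1}^{1}\tilde h = \int_\R h = (\Fourier h)(0) = 1$, plus the observation that $\mathbf 1$ is a fixed point. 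The trade-off is clear: your approach is more elementary (no Parseval, no convergence issues in Poisson summation, and it handles the $L^1 \to L^1$ boundedness in the theorem statement for free), but it hinges on the extra input $h \ge 0$, which you justify correctly by writing $(\Fourier h) = (\Fourier g)^2$ with $g$ a rescaled nonnegative Laplace kernel, so $h = g * g \ge 0$; the paper's multiplier argument needs no sign information and would survive a kernel that changes sign, and it additionally exhibits the full spectrum. Both the upper bound and the sharpness argument (constants, using $2$-periodicity of $\tilde h$ to shift the integration window) are sound as written.
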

\begin{proof}
Let $\Fper$: $L^2(-P/2,P/2) \to \ell^2(\Z)$ denote the \emph{periodic Fourier transform},
\[ (\Fper f)(k) := \widehat{f}(k) := \frac{1}{P} \int_{-\frac{P}{2}}^{\frac{P}{2}} f(x) \exp\left(-\frac{2\pi i}{P}\scalprod{k,x}{}\right) \di x \]
for all $k \in \Z$.
By the periodic convolution theorem and Poisson's summation formula, we have
\begin{align*}
	\Fper\left(T_{\mathrm{conv},P}\tilde{u}\right)(k) &= \Fper\left(\tilde{h} *_P \tilde{u}\right) = P \Fper h_{\text{per},P}(k) \cdot \Fper\tilde{u}(k) \\
	&= (\Fourier h)\left(\frac{k}{P}\right) \cdot \Fper\tilde{u}(k) = \left(1 + 0.06^2\frac{k^2}{P^2}\right)^{-2} \Fper\tilde{u}(k)
\end{align*}
for all $k \in \Z$.
Now, it follows from the isometry of $P^\frac12\Fper$ that
\begin{align*}
	\norm{T_{\mathrm{conv},P}\tilde{u}}_{L^2}^2 &= \bignorm{\tilde{h} *_P \tilde{u}}_{L^2}^2 = P \bignorm{\Fper\left(\tilde{h} *_P \tilde{u}\right)}_{\ell^2}^2
	= P \bignorm{\left(1 + 0.06^2\frac{k^2}{P^2}\right)^{-2} \Fper\tilde{u}(k)}_{\ell^2}^2 \\
	&\le \sup_{k \in \Z} \left(1 + 0.06^2\frac{k^2}{P^2}\right)^{-4} P \bignorm{\Fper\tilde{u}}_{\ell^2}^2 = \norm{\tilde{u}}_{L^2}^2
\end{align*}
for all $\tilde{u} \in L^2(-1,1)$, with equality for constant functions.
Therefore, $T_{\mathrm{conv},P}$ is bounded and its norm is equal to $1$.
\end{proof}

\section{Second antiderivative as Fredholm operator}
\label{sec:fredholm}

In the case $m = 2$, the antiderivative operator $T_{\mathrm{antider},2}$ as defined in \cref{sec:differentiation} can be expressed as a Fredholm operator 
\begin{equation}
	\label{eq:def_Fredholm}
	[T_{\mathrm{antider},2}u](x) = \int_0^1 k(x,y) u(y) \di y	
\end{equation}
with kernel $k$: $[0,1]^2 \to \R$,
\[ k(x,y) = \min \left\{x(1 - y), (1 - x)y\right\}, \]
see also \cite{hw14}. We verify this by differentiating
\[ [T_{\mathrm{antider},2}u](x) = (1 - x) \int_0^x y u(y) \di y + x \int_x^1 (1 - y) u(y) \di y \]
twice. This yields
\begin{align*}
	[T_{\mathrm{antider},2}u]^\prime(x) &= -\int_0^x y u(y) \di y + (1 - x)x u(x) + \int_x^1 (1 - y) u(y) \di y - x(1 - x) u(x) \\
	&= -\int_0^x y u(y) \di y + \int_x^1 (1 - y) u(y) \di y
\end{align*}
and
\[ [T_{\mathrm{antider},2}u]^\dprime(x) = -x u(x) - (1 - x) u(x) = -u(x) \]
almost everywhere for all $u \in L^2(0,1)$.
The functions $\{\sin(\pi n\cdot)\}_{n \in \N}$ form an orthonormal basis of $L^2(0,1)$, see Proposition 4.5.2 (iv) in \cite{Zeidler:1995}.
Expressing $T_{\mathrm{antider},2}u$ as
\[ [T_{\mathrm{antider},2}u](x) = \sum_{n = 1}^\infty \left(\int_0^1 [Tu](t) \sin(\pi n t) \di t\right) \sin(\pi n x) \]
and differentiating twice results in
\[ [T_{\mathrm{antider},2}u]^\dprime(x) = \sum_{n = 1}^\infty -(\pi n)^2 \left(\int_0^1 [Tu](t) \sin(\pi n t) \di t\right) \sin(\pi n x). \]
On the other hand,
\[ [T_{\mathrm{antider},2}u]^\dprime(x) = -u(x) = -\sum_{n = 1}^\infty \left(\int_0^1 u(t) \sin(\pi n t) \di t\right) \sin(\pi n x), \]
which implies that
\[ \int_0^1 [T_{\mathrm{antider},2}u](t) \sin(\pi n t) \di t = (\pi n)^{-2} \int_0^1 u(t) \sin(\pi n t) \di t \quad \text{for all}~n \in \N. \]
This yields the representation
\[ [T_{\mathrm{antider},2}u](x) = \sum_{n = 1}^\infty (\pi n)^{-2} \left(\int_0^1 u(t) \sin(\pi n t) \di t\right) \sin(\pi n x). \]
A straightforward computation shows that this agrees with $R\widetilde{T}_2E$.

\end{document}